\documentclass[10pt,a4paper]{article}

\usepackage[english]{babel}
\usepackage[T1]{fontenc}
\usepackage{bbm}
\usepackage{graphicx}
\usepackage{verbatim}
\usepackage{amsmath}
\usepackage{amssymb}
\usepackage{amsthm}
  \theoremstyle{plain}
  \newtheorem{theorem}{Theorem}
  \newtheorem{lemma}{Lemma}
  \newtheorem{proposition}{Proposition}
  \newtheorem{corollary}{Corollary}
  \newtheorem{assumption}{Assumption}
  \newtheorem{definition}{Definition}
  \newtheorem{example}{Example}
  \newtheorem{remark}{Remark}
\newenvironment{solution}{\begin{proof}[Solution]}{\end{proof}}
\usepackage{mathrsfs}
\usepackage{mathtools}
\usepackage[margin=1in]{geometry}
\usepackage[colorlinks=true, allcolors=blue, hypertexnames=false]{hyperref}
\usepackage{cite}
\usepackage{graphicx}
\usepackage{subcaption}
\usepackage{caption}

\usepackage{tikz}
\usetikzlibrary{arrows, shapes, chains}
\tikzstyle{startstop} = [rectangle, rounded corners, minimum width=3cm, minimum height=1cm, text centered, draw=black, fill=red!30]
\tikzstyle{io} = [trapezium, trapezium left angle = 70, trapezium right angle=110, minimum width=3cm, minimum height=1cm, text centered, draw=black, fill=blue!30]
\tikzstyle{process} = [rectangle, minimum width=5cm, minimum height=2cm, text centered, text width =5cm, draw=black, fill=white]
\tikzstyle{decision} = [diamond, minimum width=3cm, minimum height=1cm, text centered, draw=black, fill=green!30]
\tikzstyle{arrow} = [thick,-&gt;,&gt;=stealth]
\usepackage{wrapfig}

\usepackage{algpseudocode,float,algorithmicx,algorithm}
\makeatletter
\newenvironment{breakablealgorithm}
{% \begin{breakablealgorithm}
	\begin{center}
		\refstepcounter{algorithm}% New algorithm
		\hrule height.8pt depth0pt \kern2pt% \@fs@pre for \@fs@ruled
		\renewcommand{\caption}[2][\relax]{% Make a new \caption
			{\raggedright\textbf{\ALG@name~\thealgorithm} ##2\par}%
			\ifx\relax##1\relax % #1 is \relax
			\addcontentsline{loa}{algorithm}{\protect\numberline{\thealgorithm}##2}%
			\else % #1 is not \relax
			\addcontentsline{loa}{algorithm}{\protect\numberline{\thealgorithm}##1}%
			\fi
			\kern2pt\hrule\kern2pt
		}
	}{% \end{breakablealgorithm}
		\kern2pt\hrule\relax% \@fs@post for \@fs@ruled
	\end{center}
}
\makeatother

\sloppy
\allowdisplaybreaks[4]
\raggedbottom

\setlength{\parskip}{0.2em}

\begin{document}

\title{\textbf{Nonconvex Optimization Framework for Group-Sparse Feedback Linear-Quadratic Optimal Control:\\
Penalty Approach}}

\author{Lechen Feng%
\thanks{Department of Applied Mathematics, The Hong Kong Polytechnic University,
China. Email: {\tt fenglechen0326@163.com}}
\and Xun Li%
\thanks{Department of Applied Mathematics, The Hong Kong Polytechnic University,
China. Email: {\tt li.xun@polyu.edu.hk} This author is supported by RGC of Hong Kong grants 15226922 and 15225124, and partially from 4-ZZVB and PolyU 4-ZZP4.}
\and Yuan-Hua Ni%
\thanks{College of Artificial Intelligence, Nankai University,
Tianjin, China. Email: {\tt yhni@nankai.edu.cn}.}
}

\date{}

\maketitle

\begin{abstract}

This paper develops a unified nonconvex optimization framework for the design of group-sparse feedback controllers in infinite-horizon linear-quadratic (LQ) problems. We address two prominent extensions of the classical LQ problem: the distributed LQ problem with fixed communication topology (DFT-LQ) and the sparse feedback LQ problem (SF-LQ), both of which are motivated by the need for scalable and structure-aware control in large-scale systems. Unlike existing approaches that rely on convex relaxations or are limited to block-diagonal structures, we directly formulate the controller synthesis as a finite-dimensional nonconvex optimization problem with group $\ell_0$-norm regularization, capturing general sparsity patterns. We establish a connection between DFT-LQ and SF-LQ problems, showing that both can be addressed within our unified framework. Furthermore, we propose a penalty-based proximal alternating linearized minimization (PALM) algorithm and provide a rigorous convergence analysis under mild assumptions, overcoming the lack of coercivity in the objective function. The proposed method admits efficient solvers for all subproblems and guarantees global convergence to critical points. Our results fill a key gap in the literature by enabling the direct design of group-sparse feedback gains with theoretical guarantees, without resorting to convex surrogates or restrictive structural assumptions.

\textbf{Keywords:}
Linear-quadratic problem, distributed control, sparse feedback, nonconvex optimization, PALM, ADMM
\end{abstract}

\section{Introduction}

During the last few decades, distributed control has gained significant attention due to its appealing characteristics, such as the potential to lower communication costs and parallelizable processing.
In distributed control, the global system is managed by multiple individual agents that have limited ability to communicate with only a partial state of the global system.
Among various formulations, distributed linear-quadratic (LQ) optimal control has attracted particular interest as a tractable yet powerful framework for balancing performance and coordination requirements in networked systems, while one of the representative early works in this area is the study by Davison and Ferguson (1981) \cite{kk3}.
Concretely, consider a linear time-invariant (LTI) system
\begin{equation}\label{system}
\begin{aligned}
\dot{x}(t)&=Ax(t)+B_2u(t)+B_1w(t),\\
z(t)&=Cx(t)+Du(t)
\end{aligned}
\end{equation}
with state $x(t)\in\mathbb{R}^n$, input $u(t)\in\mathbb{R}^m$, exogenous disturbance input $w(t)\in\mathbb{R}^l$, controlled output $z(t)\in\mathbb{R}^q$ and matrices $A,B_2,B_1,C,D$ with proper sizes. The infinite-horizon LQ problem is to find a linear static state feedback gain $K$ such that
$u(t)=-Kx(t)$
minimizing the following performance criterion
\begin{equation}\label{J}
J=\int_{0}^{\infty}z(t)^\top z(t){\rm d}t.
\end{equation}
By \cite{b1}, to optimize (\ref{J}), it is equivalent to minimize the $\mathcal{H}_2$-norm of the transfer function
$
H(s) = (C-DK)(sI_n-A+B_2K)^{-1}B_1
$
from $w$ to $z$, and the performance criterion can be reformulated as
\begin{equation*}
  J(K)=\Vert H(s)\Vert_{\mathcal{H}_2}^2={\rm Tr}((C-DK)W_c(C-DK)^\top),
\end{equation*}
where $W_c$ is the controllability Gramian associated with the closed-loop system.
Let $\mathcal{S}$ denote the set of controllers that internally stabilize system (\ref{system}) (see Section 14 of \cite{Zhou-1996} for details). In this paper, we look at solving the following two optimization problems:
\begin{align}
  &\min_{K\in\mathcal{S}}~J(K),~~{\rm s.t.}~~K\in{\mathcal{K}}, \label{intro-1}\\
  &\min_{K\in\mathcal{S}}~J(K),~~{\rm s.t.}~~K\text{~is group sparse}, \label{intro-2}
\end{align}
where $\mathcal{K}\doteq \{K\in\mathbb{R}^{m\times n}\colon K_{ij}=0,~\text{for all}~(i,j)\in\mathbf{U}\}$ with predetermined subset $\mathbf{U}\subseteq [m]\times [n]$ ($[m]\doteq \{1,\dots, m\}, [n]\doteq \{1,\dots, n\}$).
In particular, subspace $\mathcal{K}$ implicitly predefines a communication topology, where the presence of a nonzero entry $K_{ij}$ indicates that state information from subsystem $j$ is available to controller $i$; hence, optimization problem \eqref{intro-1} is known as a distributed LQ problem with fixed communication topology (DFT-LQ problem).
In contrast, problem \eqref{intro-2} promotes an un-predefined group sparse structure, where $K$ is partitioned into $G$ groups $\{K^{(g)}\}_{g=1}^G$, and the sparsity is imposed at the group level, i.e., encouraging many $K^{(g)} = 0$. The readers may see Section \ref{section2} for a more detailed definition of group-sparsity, and in this paper, we name problem \eqref{intro-2} as the sparse feedback LQ problem (SF-LQ problem).

\subsection{Related Works}

\noindent
\textbf{1) Distributed LQ Problem with Fixed Communication Topology.}
DFT-LQ problem presents a significant challenge, while Tsitsiklis and Athans (1985) \cite{Tsitsiklis-1985} point out the inherent difficulty of decentralized decision making and suggest that optimality may be an elusive goal. Therefore, numerous scholars have investigated relaxed formulations of the DFT-LQ problem. Specifically, Rotkowitz and Lall (2006) \cite{rotkowitz2005characterization} define a property called quadratic invariance, showing that it is necessary and sufficient for the constraint set to be preserved under feedback, and that this allows optimal stabilizing decentralized controllers to be synthesized via convex programming. Furieri \textit{et al.} (2020) \cite{furieri2020sparsity} extend the work of Rotkowitz and Lall, and they propose a generalized framework for the convex design of optimal and near-optimal LTI dynamic output feedback controllers with a predetermined sparsity pattern.
However, both of the works mentioned above relax the DFT-LQ problem into an infinite-dimensional convex optimization problem, which necessitates using finite-dimensional approximations (e.g., Ritz approximations). Moreover, higher-order approximations incur extremely high computational complexity, rendering such approximations unsuitable for large-scale problems.

To overcome the limitations of infinite-dimensional optimization, subsequent researchers turn to finite-dimensional formulations of the DFT-LQ problem. An important line focuses exclusively on DFT-LQ problems under a fully decentralized communication topology, i.e.,
\begin{equation}\label{fully-dis}
\begin{aligned}
  \min_{K\in\mathcal{S}}~&J(K),\\
  {\rm s.t.}~& K\in\left\{ K  \,\colon\, K = \mathbf{blockdiag}(K_1, K_2, \dots, K_p),\ K_i \in \mathbb{R}^{m_i \times n_i},~ \sum_{i=1}^p m_i = m,\sum_{i=1}^p n_i = n  \right\}
\end{aligned}
\end{equation}
with prespecified dimensions $p, \{m_i\}_{i=1}^p, \{n_i\}_{i=1}^p$.
Here, $\mathbf{blockdiag}(A_1,\dots,A_m)$ denotes the block diagonal matrix with diagonal entries $A_1,A_2,\dots,A_m$.
Geromel \textit{et al.} (1994) \cite{i14} formulate a convex relaxation of problem \eqref{fully-dis}:
\begin{equation}\label{convex-relaxation}
  \min_{W\in\mathscr{C}}~{\rm Tr}(RW)
\end{equation}
with given $R$ and convex cone $\mathscr{C}$, using a parameter space approach which enables converting problem \eqref{fully-dis} into a convex optimization problem. Building upon the work \cite{i14}, Ma \textit{et al.} (2021) \cite{ref1} design the symmetric Gauss-Seidel semi-proximal augmented Lagrangian method for problem \eqref{convex-relaxation}, while the preserved decentralized structure, robust stability, and robust performance are all suitably guaranteed with high computational effectiveness.
Moreover, Zheng \textit{et al.} (2020) \cite{i9} propose a distributed design method for problem \eqref{convex-relaxation} by exploiting the underlying sparsity properties based on the chordal decomposition of sparse block matrices and ADMM.
Yang \textit{et al.} (2023) \cite{i12} reformulate problem \eqref{convex-relaxation} into a CP (Conic Programming) problem by the block diagonal structure of the gain matrix and the property of the Kronecker product. Then, they propose a proximal ADMM (PADMM) to solve the dual problem, while finding the non-expansive operator of PADMM so that Halpern fixed-point iteration could be used to accelerate the proposed algorithm.

Compared to the fully decentralized problem \eqref{fully-dis}, the DFT-LQ problem allows partial communication among subsystems, which significantly improves the performance while maintaining scalability.
Fazelnia \textit{et al.} (2017) \cite{i10} show that the NP-hard DFT-LQ problem has a quadratic formulation, which can be relaxed to an SDP (Semidefinite Program). If the SDP relaxation has a rank-$1$ solution, a globally optimal distributed controller can be recovered from this solution.
Additionally, Li \textit{et al.} (2021) \cite{i11} consider a distributed reinforcement learning variant for the DFT-LQ problem. They propose a zero-order distributed policy optimization algorithm that learns linear controllers in a distributed fashion, leveraging the ideas of policy gradient, zero-order optimization, and consensus algorithms.
Talebi and Mesbahi (2023) \cite{talebi2022policy} study the second-order geometry of sub-manifolds of Schur stabilizing controllers $\mathcal{S}\cap\mathcal{K}$,  providing a Newton-type algorithm that does not rely on the exponential mapping or traction, while ensuring local convergence guarantees.

\noindent
\textbf{2) Sparse Feedback LQ Problem.}
SF-LQ problem exhibits the following non-constrained reformulation
\begin{equation}\label{l0-intro}
  \min_{K\in\mathcal{S}}~J(K)+\gamma \Vert K\Vert_0
\end{equation}
with constant $\gamma>0$. Due to the nonconvex nature of both the objective function $J(K)$ and the feasible set $\mathcal{S}$, existing sparse optimization algorithms are not directly applicable to problem \eqref{l0-intro}. Therefore, it is necessary to design specialized algorithms tailored to problem \eqref{l0-intro}.
Specifically, Lin \textit{et al.} (2013) \cite{kk4} utilize ADMM to solve problem \eqref{l0-intro}, although no theoretical convergence analysis has been presented.
Bahavarnia (2015) \cite{bahavarnia2015sparse} and Park \textit{et al.} (2020) \cite{park2020structured} investigate the convex relaxation of problem \eqref{l0-intro}:
\begin{equation}\label{l1-intro}
  \min_{K\in\mathcal{S}}~J(K)+\gamma \Vert K\Vert_1,
\end{equation}
and prove that the Structured Policy Iteration algorithm, taking a policy evaluation step and a policy improvement step in an iterative manner, can solve problem \eqref{l1-intro} with a convergence guarantee to a stationary point.
Following the approach of $\ell_1$ relaxation, Takakura and Sato (2024) \cite{takakura2023structured} revisit the policy gradient algorithm based on the gradient projection method and show its global convergence to $\epsilon$-stationary points of the static output sparse feedback LQ problem.
More generally, Negi and Chakrabortty (2020) \cite{negi2020sparsity} study the generalization of problem \eqref{l1-intro} with time-delays in the feedback and fair sharing of bandwidth among users.

\noindent
\textbf{3) First-Order Nonconvex Optimization.} In this paper, both DFT-LQ problem and SF-LQ problem are converted into the following standard form of nonconvex optimization:
\begin{align}
\min_{x,y}~~&f(x,y)\doteq F(x)+G(y)+H(x,y).\label{nonconvex-1}
\end{align}
For unconstrained problem \eqref{nonconvex-1}, all the existing literature makes the following assumptions in order to establish the convergence to a stationary point:
\begin{itemize}
  \item $F, G$ are possibly nonconvex and nonsmooth functions;
  \item The coupling term $H$ is smooth and possibly nonconvex function.
\end{itemize}
 Concretely, Razaviyayn \textit{et al.} (2013) \cite{ref9} study an alternative inexact BCD (Block Coordinate Descent) approach which updates the variable blocks by successively minimizing a sequence of approximations of $f$. Bolte \textit{et al.} (2014) \cite{bolte2014proximal} introduce a PALM (proximal alternating linearized minimization) algorithm for solving problem \eqref{nonconvex-1}. Building on the powerful KL (Kurdyka-{\L}ojasiewicz) property, they derive a self-contained convergence analysis framework and establish that each bounded sequence generated by PALM globally converges to a stationary point.
Furthermore, Bo\c{t} \textit{et al.} (2019) \cite{bot2019proximal} propose a proximal algorithm for minimizing problem \eqref{nonconvex-1} with objective $f$ consisting of three summands: the composition of a nonsmooth function $F$ with a linear operator, another nonsmooth function $G$, and a smooth function $H$ which couples the two block variables. For the improved numerical performance, Pock and Sabach (2016) \cite{pock2016inertial} analyze an inertial version of the PALM algorithm and prove its global convergence to a critical point. Taking this a step further, Gao \textit{et al.} (2020) \cite{gao2020gauss} design the algorithm that is a Gauss-Seidel type inertial PALM (GiPALM) algorithm, and prove that each bounded sequence generated by GiPALM globally converges to a critical point.

\subsection{Contributions}\label{contribution-sec}

The contributions of this paper can be categorized into two aspects: for the LQ problem and nonconvex optimization.

\noindent
\textbf{1) Contributions for LQ Problem.} Based on the above literature review, existing methods for solving DFT-LQ and SF-LQ problems suffer from the following limitations:
\begin{itemize}
  \item The existing methods are specifically tailored for feedback gain with block-diagonal structures, while they lack the flexibility to address feedback gain with general configurations; see \cite{ref1,i12,i9,i14}.
  \item The existing methods involve solving an infinite-dimensional optimization problem, and typically, one needs to employ methods from partial differential equations \cite{gelfand2000calculus}, polynomial approximation \cite{devolder2010solving}; see \cite{furieri2020sparsity, rotkowitz2005characterization, matni2016regularization}.
  \item The existing methods exhibit only a local convergence guarantee or belong to the class of heuristic algorithms; see \cite{i10,kk4,arastoo2016closed}.
  \item For the SF-LQ problem in particular, the existing methods rely on the convex relaxation of the $\ell_0$-norm, e.g., the $\ell_1$-norm; see \cite{bahavarnia2015sparse,park2020structured,takakura2023structured,negi2020sparsity}. However, when feedback gain $K$ is located far from the origin, the $\ell_1$ relaxation may lead to a regime where $\Vert K \Vert_1$ dominates the LQ cost, i.e., $\Vert K \Vert_1 \gg J(K)$; this makes the balance between LQ cost and sparsity no longer hold.
\end{itemize}
In this paper, we directly study the DFT-LQ and SF-LQ problems from the perspective of nonconvex optimization. Concretely, we relax the DFT-LQ and SF-LQ problems into a finite-dimensional problem with group $\ell_0$-norm (rather than the convex relaxation of group $\ell_0$-norm), and provide the global convergence analysis.

\noindent
\textbf{2) Contributions for Nonconvex Optimization.}  In this paper, we study DFT-LQ and SF-LQ problems through nonconvex optimization of the following standard form:
\begin{equation}\label{contri-1}
\begin{aligned}
\min_{\widetilde{W},\widetilde{P}}~~ &f(\widetilde{W})+g(\widetilde{P})\\
{\rm s.t.}~~&\mathcal{A}\widetilde{W}+\mathcal{B}\widetilde{P}=0
\end{aligned}
\end{equation}
with nonsmooth and convex $f$ and group $\ell_0$-norm $g$. First, the linear constraints are introduced into the objective using a penalty function approach, i.e., the following problem is of concern:
\begin{equation}\label{contri-2}
\begin{aligned}
\min_{\widetilde{W},\widetilde{P}}~~ f(\widetilde{W})+g(\widetilde{P})+\frac{\rho}{2}\Vert \mathcal{A}\widetilde{W}+\mathcal{B}\widetilde{P}\Vert^2
\end{aligned}
\end{equation}
with $\rho>0$. We aim to use the PALM method introduced by Bo\c{t} \textit{et al.} \cite{bot2019proximal}; however, the results of \cite{bot2019proximal} cannot be directly used since
\begin{itemize}
  \item there are subproblems within the PALM method, which are hard to solve;
  \item the assumptions of \cite{bot2019proximal} do not hold since both $g$ and $\Vert \mathcal{A}\widetilde{W}+\mathcal{B}\widetilde{P}\Vert^2$ are not coercive; hence, the convergence analysis of \cite{bot2019proximal} cannot be applied in our settings.
\end{itemize}
In this paper, we design an efficient solver for subproblems within the PALM framework, while the convergence analysis and convergence rate are established.

\section{Preliminaries}\label{section2}

{\textbf{Notation}}. Let $\Vert \cdot\Vert$, $\Vert\cdot\Vert_F$, $\Vert\cdot\Vert_0$ and $\Vert \cdot\Vert_{\mathcal{H}_2}$ be the spectral norm, Frobenius norm, $\ell_0$-norm, and $\mathcal{H}_2$-norm of a matrix respectively. $\mathbb{S}^n$ is the set of symmetric matrices of $n\times n$;  $\mathbb{S}^n_{++}$ ($\mathbb{S}^n_+$)  is the set of positive (semi-)definite (PSD) matrices of $n\times n$; $I_n$ is identity matrix of $n\times n$; $\mathbf{0}_n$ is $(0,\dots,0)^\top\in\mathbb{R}^n$ and $\mathbf{1}_n$ is $(1,\dots,1)^\top\in\mathbb{R}^n$.  $A\succ B(A\succeq B)$ means that the matrix $A-B$ is positive (semi-)definite.
{
%For $A\in\mathbb{R}^{m\times n}=(a_{ij})$, $\mathrm{vec}(A)=(a_{11},\dots, a_{mn})^\top\in\mathbb{R}^{mn}$.
$\mathrm{vec}(A)$ denotes the column vector formed by stacking columns of $A$ one by one.
$\Gamma_{+}^n$ is $\{\mathrm{vec}(A)\colon A\in\mathbb{S}_+^n\}$.
The operator $\langle A,B\rangle$ denotes the Frobenius inner product, i.e., $\langle A,B\rangle=\mathrm{Tr}(A^\top B)$ for all $A,B\in\mathbb{R}^{m\times n}$, and the notation $\otimes$ denotes the Kronecker product of two matrices.
For $\tau>0$, introduce the proximal operator of $g\colon \mathbb{R}^n\to\mathbb{R}$
\begin{equation*}
  \mathbf{prox}_{\tau g}(z):=\underset{y\in\mathbb{R}^n}{\operatorname*{argmin}}\left\{g(y)+\frac{1}{2\tau}\left\|y-z\right\|^2\right\}.
\end{equation*}
}\noindent
For any subset $C\subseteq\mathbb{R}^n$, $\delta_{C}(\cdot)$ is the indicator function of $C$, i.e.,
\begin{equation*}
  \delta_C(x) = \left\{
  \begin{array}{ll}
    0, & \text{if } x \in C, \\
    +\infty, & \text{if } x \notin C.
  \end{array}
  \right.
\end{equation*}
For $n\geq 1$, the set $[n]$ denotes $\{1,\dots,n\}$. The identity operator, denoted $\mathcal{I}$, is the operator on $\mathbb{R}^n$ such that $\mathcal{I}v=v$, $\forall v\in\mathbb{R}^n$.
Given a vector $x\in\mathbb{R}^n$, the subvector of $x$ composed of the components of $x$ whose indices are in a given subset $\mathbb{I}\subseteq [n]$ is denoted by $x_\mathbb{I}\in\mathbb{R}^{|\mathbb{I}|}$.
For an extended real-valued function $f\colon \mathbb{R}^n\to [-\infty,+\infty]$, the effective domain is the set $\mathrm{dom}(f)=\{x\in\mathbb{R}^n\colon f(x)<\infty\}$.
 Given a sequence $\{a_n\}_{n\geq 0}$, we denote the set of all cluster points of the sequence by $\mathbf{Cluster}(\{a_n\}_{n\geq 0})$. The graph of \( f \), denoted by \( \mathbf{graph}(f) \), is defined as
$
\mathbf{graph}(f) := \left\{ (x, f(x)) \in \mathbb{R}^n \times \mathbb{R}^m \;\middle|\; x \in \mathrm{dom}(f) \right\}.
$
Given positive integer $a,b$, $a \bmod b$  denotes the remainder obtained when $a$ is divided by $b$. Given $x\in\mathbb{R}$, $
\lfloor x \rfloor$ denotes $\max\{n \in \mathbb{Z} : n \leq x\}$.

We now introduce the subdifferential operator for nonconvex functions. Unlike convex functions, the subdifferential of a nonconvex function can be defined in several different ways. Let $\psi\colon\mathbb{R}^n\to\mathbb{R}\cup\{+\infty\}$ be a proper and lower semicontinuous function, and $x\in\mathrm{dom}(\psi)$. The Fr\'{e}chet subdifferential of $\psi$ at $x$ is
\begin{equation*}
  \hat{\partial}\psi(x)\doteq \left\{d\in\mathbb{R}^n\colon \liminf\limits_{y\to x}\frac{\psi(y)-\psi(x)-\langle d,y-x\rangle}{\Vert y-x\Vert}\geq 0\right\},
\end{equation*}
and the limiting (Mordukhovich) subdifferential of $\psi$ at $x$ is
\begin{equation*}
  \partial \psi(x)\doteq \left\{ x\in\mathbb{R}^n\colon \exists \text{~sequences}~x_n\to x,d_n\to d,~\mathrm{s.t.}~\psi(x_n)\to\psi(x),d_n\in\hat{\partial}\psi(x_n)\right\}.
\end{equation*}
For $x\notin \mathrm{dom}\psi$, we set $\hat{\partial}\psi(x)=\partial \psi(x)\doteq \emptyset$. If $\psi$ is convex, then the two subdifferentials coincide with the convex subdifferential of $\psi$, and thus
\begin{equation*}
  \hat{\partial}\psi(x)=\partial \psi(x)=\{d\in\mathbb{R}^n\colon \psi(y)\geq \psi(x)+\langle d,y-x\rangle~\forall y\in\mathbb{R}^n\}.
\end{equation*}
Therefore, using the same notation for the Mordukhovich subdifferential operator of nonconvex functions as for the subdifferential operator of convex functions does not lead to any contradiction. The Mordukhovich subdifferential operator is closed; readers may refer to Chapter 8 of \cite{rockafellar-book} for related results. 

Consider a partitioned matrix
\begin{equation*}
  A=\begin{bmatrix}
      A_{11} & \cdots & A_{1t} \\
      \vdots & \ddots & \vdots \\
      A_{s1} & \cdots & A_{st}
    \end{bmatrix}\in\mathbb{R}^{m\times n},
\end{equation*}
where block $A_{ij}\in\mathbb{R}^{m_i\times n_j}$ for $i=1,\dots,s$ and $j=1,\dots,t$. For notational simplicity, we denote such a partitioned matrix as $A=\mathbf{block}_{s,t}(A_{11},\dots,A_{st})$. To characterize the sparsity pattern of the blocks, we define a mapping $g\colon \mathbb{R}^{m\times n}\to \{0,1\}^{s\times t}$ that maps a partitioned matrix $A$ to a binary matrix indicating the presence of nonzero blocks:
\begin{equation*}
  g(A)_{ij}=\left\{
  \begin{aligned}
  &1,~~A_{ij} \neq 0,\\
  &0,~~A_{ij} = 0.
  \end{aligned}
  \right.
\end{equation*}
Based on this mapping, we define the group $\ell_0$-norm of a partitioned matrix $A$ as $\Vert A\Vert_{s,t; A_{11},\dots,A_{st};0}=\Vert g(A)\Vert_0$, which counts the number of nonzero blocks in $A$. For brevity, we write this as $\Vert A\Vert_{s,t;0}$ when the block structure is clear from the context.

\subsection{SF-LQ Formulation}

We assume the following hypotheses hold, which have also been introduced in \cite{ref1,i11,i14}.
\begin{assumption}\label{ass1}
Let $C^\top D=0,D^\top D\succ 0,C^\top C\succ 0,B_1B_1^\top\succ0, (A,B_2)$ be stabilizable and $(A,C)$ have no unobservable modes on the imaginary axis.
\end{assumption}

\begin{remark}\label{ass_1_explain}
	If $z(t)$ exhibits the following form that is studied in \cite{i9}
\begin{equation*}
  z(t)=\begin{bmatrix}
         Q^{\frac{1}{2}} \\
         0
       \end{bmatrix}x(t)+
       \begin{bmatrix}
         0 \\
         R^{\frac{1}{2}}
       \end{bmatrix}u(t)
\end{equation*}
with $Q,R\succ 0$, then $C^\top C\succ0$, $D^\top D\succ 0$ and $C^\top D=0$ naturally hold.
\end{remark}

By introducing group $\ell_0$-norm into objective function of \eqref{intro-2}, SF-LQ problem can be reformulated by the following optimization problem
\begin{equation}\label{sparse_feedback_1}
	\begin{aligned}
            \min_{K\in\mathbb{R}^{m\times n}}~~& J(K)+\gamma\Vert K\Vert_{s,t;K_{11},\dots,K_{st};0}\\
            {\rm s.t.}~~~~&K\in\mathcal{S},
    \end{aligned}
\end{equation}
where $K_{ij}\in\mathbb{R}^{m_i\times n_j}, i=1,\dots,s, j=1,\dots,t$  represent the block components of the feedback gain matrix, $\mathcal{S}$ denotes the set of stabilizing feedback gains, and $\gamma\geq 0$ is a weighting parameter. However, optimization problem \eqref{sparse_feedback_1} presents several fundamental challenges: the nonconvexity of both LQ cost $J(K)$ and stabilizing feedback set $\mathcal{S}$, as well as the discontinuity of group sparsity measure $\Vert K\Vert_{s,t;0}$.
We employ a classic parameterization strategy \cite{Zhou-1996}  to address these challenges by introducing augmented variables to transform problem \eqref{sparse_feedback_1} into a more tractable form. 
Concretely, we introduce the following notations with $p=m+n$:
\begin{equation*}
  \begin{aligned}
  &F=\begin{bmatrix}
      A & B_2 \\
      0 & 0
    \end{bmatrix}\in\mathbb{R}^{p\times p},~
    G=\begin{bmatrix}
        0  \\
        I_m
      \end{bmatrix}\in\mathbb{R}^{p\times m},\\
  & Q=\begin{bmatrix}
        B_1B_1^\top & 0 \\
        0 & 0
      \end{bmatrix}\in\mathbb{S}^p,~
      R=\begin{bmatrix}
          C^\top C & 0 \\
          0 & D^\top D
        \end{bmatrix}\in\mathbb{S}^p.
  \end{aligned}
\end{equation*}

\begin{assumption}\label{ass2}
The parameter $F$ is unknown but convex-bounded, i.e., $F$ belongs to a polyhedral domain, which is expressed as a convex combination of the extreme matrices: $F=\sum_{i=1}^{M}\xi_iF_i,\xi_i\geq 0,\sum_{i=1}^{M}\xi_i=1$, and $F_i=\begin{bmatrix}
                              A_i & B_{2,i} \\
                              0 & 0
                            \end{bmatrix}\in\mathbb{R}^{p\times p}$
denotes the extreme vertex of the uncertain domain.
\end{assumption}

Let the block matrix be 
\begin{equation*}
W=\begin{bmatrix}
W_1 & W_2\\
W_2^\top & W_3
\end{bmatrix}\in\mathbb{S}^p
\end{equation*}
with $W_1\in\mathbb{S}_{++}^n,W_2\in\mathbb{R}^{n\times m},W_3\in\mathbb{S}^m$. Regard $\Theta_i(W)=F_iW+WF_i^\top+Q$ as a block matrix, i.e.,
\begin{equation*}
\Theta_i(W)=\begin{bmatrix}
\Theta_{i,1}(W) & \Theta_{i,2}(W)\\
\Theta_{i,2}^\top(W) & \Theta_{i,3}(W)
\end{bmatrix}\in\mathbb{S}^p.
\end{equation*}
The following theorem introduces a subset of stabilizing controller gains, and finds an upper bound of performance criterion (\ref{J}).
\begin{theorem}[\!\!\cite{ref1}]\label{thm_para}
  One can define the set
  \begin{align*}
  \mathscr{C}&=\{W\in\mathbb{S}^p:W\succeq0,\Theta_{i,1}(W)\preceq0,\forall i=1,2,\ldots,M\},
  \end{align*}
and let
    $\mathscr{K}=\{K=W_2^\top W_1^{-1}\colon W\in\mathscr{C}\}$.
  Then,
  \begin{enumerate}
    \item $K\in\mathscr{K}$ stabilizes the closed-loop system;
    \item $K\in\mathscr{K}$ gives
    $$\langle R,W\rangle\geq \Vert H_i(s)\Vert_{\mathcal{H}_2}^2,~~i=1,\dots,M,$$
    where $\Vert H_i(s)\Vert_{\mathcal{H}_2}$ represents the $\mathcal{H}_2$-norm with respect to the $i$-th extreme system.
  \end{enumerate}
\end{theorem}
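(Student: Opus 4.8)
The plan is to verify the two claims by connecting the set $\mathscr{C}$ to the classical Lyapunov-based parameterization of $\mathcal{H}_2$-optimal control. The key observation is that the matrix $W$ is meant to play the role of a closed-loop controllability Gramian (or an upper bound thereof) for the augmented system whose state is $(x^\top, u^\top)^\top$ with the algebraic relation $u = -Kx$ encoded through the block structure $W_2^\top = -K W_1$, i.e. $K = W_2^\top W_1^{-1}$. I would begin by rewriting the top-left block of the Lyapunov-type expression $\Theta_i(W) = F_i W + W F_i^\top + Q$ explicitly: with $F_i = \begin{bmatrix} A_i & B_{2,i} \\ 0 & 0\end{bmatrix}$ one computes
\begin{equation*}
\Theta_{i,1}(W) = A_i W_1 + W_1 A_i^\top + B_{2,i} W_2^\top + W_2 B_{2,i}^\top + B_1 B_1^\top.
\end{equation*}
Substituting $W_2^\top = -K W_1$ (valid since $W \in \mathscr{C}$ gives $W_1 \in \mathbb{S}^n_{++}$, so $K$ is well-defined) turns this into $(A_i - B_{2,i} K) W_1 + W_1 (A_i - B_{2,i} K)^\top + B_1 B_1^\top$. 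Hence $\Theta_{i,1}(W) \preceq 0$ is exactly the statement that $W_1$ satisfies a Lyapunov inequality for the closed-loop matrix $A_{cl,i} := A_i - B_{2,i} K$ with the disturbance term $B_1 B_1^\top$.

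For claim (1), I would invoke the standard Lyapunov stability argument: if $W_1 \succ 0$ and $A_{cl,i} W_1 + W_1 A_{cl,i}^\top \preceq -B_1 B_1^\top \preceq 0$, then $A_{cl,i}$ is stable provided one rules out purely imaginary eigenvalues. The case $B_1 B_1^\top \succ 0$ (which holds under Assumption \ref{ass1}) makes the inequality strict, so $A_{cl,i}$ is Hurwitz for every $i$, and in particular for the nominal $F = \sum_i \xi_i F_i$ by taking the corresponding convex combination of the Lyapunov inequalities (the map $W \mapsto \Theta_{i,1}(W)$ is affine, so $\sum_i \xi_i \Theta_{i,1}(W) = \Theta_1(W) \preceq 0$ with the same $W_1$). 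This establishes internal stabilization. For claim (2), I would use the fact that the actual closed-loop controllability Gramian $W_c^{(i)}$ for the $i$-th extreme system solves the Lyapunov \emph{equation} $A_{cl,i} W_c^{(i)} + W_c^{(i)} A_{cl,i}^\top + B_1 B_1^\top = 0$, whereas $W_1$ solves the corresponding inequality; a comparison lemma for Lyapunov operators (monotonicity: if $\mathcal{L}(X) \preceq \mathcal{L}(Y)$ with $\mathcal{L}$ the stable Lyapunov operator, then $X \succeq Y$) yields $W_1 \succeq W_c^{(i)}$. Then, recalling $\|H_i(s)\|_{\mathcal{H}_2}^2 = \mathrm{Tr}\big((C - DK) W_c^{(i)} (C-DK)^\top\big)$ and using $C^\top D = 0$ together with the block structure of $R$, I would show $\langle R, W\rangle = \mathrm{Tr}(C^\top C W_1) + \mathrm{Tr}(D^\top D W_3)$, while $\mathrm{Tr}\big((C-DK) W_c^{(i)} (C-DK)^\top\big) = \mathrm{Tr}(C^\top C W_c^{(i)}) + \mathrm{Tr}(K^\top D^\top D K W_c^{(i)})$. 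The first terms compare directly via $W_1 \succeq W_c^{(i)} \succeq 0$; for the second, I would need the constraint linking $W_3$ to $W_1$ and $W_2$ — specifically that feasibility should force $W_3 \succeq W_2^\top W_1^{-1} W_2 = K W_1 K^\top \succeq K W_c^{(i)} K^\top$ (a Schur-complement condition implied by $W \succeq 0$), which closes the gap.

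The main obstacle I anticipate is the second half of claim (2): the set $\mathscr{C}$ as literally defined only imposes $W \succeq 0$ and the $\Theta_{i,1}(W) \preceq 0$ conditions, so I must extract the Schur-complement inequality $W_3 \succeq W_2^\top W_1^{-1} W_2$ purely from $W \succeq 0$ (which is indeed standard: $\begin{bmatrix} W_1 & W_2 \\ W_2^\top & W_3\end{bmatrix} \succeq 0$ with $W_1 \succ 0$ is equivalent to $W_3 \succeq W_2^\top W_1^{-1} W_2$), and then combine it correctly with the Gramian comparison $W_1 \succeq W_c^{(i)}$ in a way that respects the direction of the matrix inequalities after conjugation by $K$. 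Care is needed because $K W_1 K^\top \succeq K W_c^{(i)} K^\top$ follows from $W_1 \succeq W_c^{(i)}$ by congruence, which is fine, but one must track that all the pieces assemble into $\langle R, W\rangle \geq \|H_i(s)\|_{\mathcal{H}_2}^2$ with no sign errors. Since this is quoted as Theorem \ref{thm_para} from \cite{ref1}, I would ultimately cite that reference for the detailed verification, but the sketch above is the route I would follow to reconstruct it.
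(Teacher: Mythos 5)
Your reconstruction is essentially the standard argument behind this result; the paper itself supplies no proof and simply cites Theorem 1 of \cite{ref1} (see Remark \ref{explain_thm1}), so there is nothing in the text to compare against beyond that citation. Your route --- expand $\Theta_{i,1}(W)=A_iW_1+W_1A_i^\top+B_{2,i}W_2^\top+W_2B_{2,i}^\top+B_1B_1^\top$, substitute the feedback relation to obtain a closed-loop Lyapunov inequality, conclude Hurwitz stability from $W_1\succ0$ and $B_1B_1^\top\succ0$, then compare $W_1$ with the true Gramian $W_c^{(i)}$ via monotonicity of the Lyapunov operator and close the $\mathrm{Tr}(D^\top D W_3)$ term with the Schur complement $W_3\succeq W_2^\top W_1^{-1}W_2=KW_1K^\top$ --- is exactly how the cited result is established, and your use of $C^\top D=0$ to split the $\mathcal{H}_2$ cost is correct. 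Two small points deserve correction. First, there is a sign slip: $W_2^\top=-KW_1$ gives $K=-W_2^\top W_1^{-1}$, not $K=W_2^\top W_1^{-1}$; with the theorem's literal convention $K=W_2^\top W_1^{-1}$ the inequality $\Theta_{i,1}(W)\preceq0$ is a Lyapunov inequality for $A_i+B_{2,i}K$, so either the closed loop is $A+B_2K$ (i.e., $u=Kx$) or one must take $K=-W_2^\top W_1^{-1}$ to match the introduction's $u=-Kx$ and $A-B_2K$; this convention mismatch is inherited from quoting \cite{ref1} and is harmless for claim (2) since $W_2^\top W_1^{-1}W_2=KW_1K^\top$ either way, but you should not assert both identities simultaneously. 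Second, your parenthetical ``valid since $W\in\mathscr{C}$ gives $W_1\in\mathbb{S}^n_{++}$'' is not implied by the definition of $\mathscr{C}$, which only requires $W\succeq0$; the invertibility of $W_1$ is precisely the implicit assumption \eqref{imp-ass} that Remark \ref{explain_thm1} flags and adopts, so it should be invoked as a standing hypothesis rather than derived. With those two adjustments your sketch is a faithful and complete reconstruction of the omitted proof.
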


\begin{remark}\label{explain_thm1}
The above theorem is directly quoted from the work of J. Ma \textit{et al.} (2021); see Theorem 1 of \cite{ref1} for details. Their work implicitly assumes that
\begin{equation}\label{imp-ass}
W_1\succ 0,~~\forall~W\in\mathscr{C}.
\end{equation}
This assumption, while not explicitly stated, is standard in the literature and generally holds in practice; see Assumption 2 and Lemma 1 of \cite{Fazel-2021cdc}. In cases where this assumption does not hold, $\mathscr{C}$ takes the form:
\begin{equation*}
  \mathscr{C}=\{W\in\mathbb{S}^p\colon W\succeq0, W_1\succ 0,\Theta_{i,1}(W)\preceq0,\forall i=1,2,\ldots,M\}.
\end{equation*}
For any sufficiently small $\delta>0$, we can consider the alternative set:
\begin{equation*}
  \mathscr{C}^\prime\doteq\{W\in\mathbb{S}^p\colon W\succeq0, W_1-\delta I_n\succeq 0,\Theta_{i,1}(W)\preceq0,\forall i=1,2,\ldots,M\}\subseteq \mathscr{C}.
\end{equation*}
The set $\mathscr{C}^\prime$ differs from $\mathscr{C}$ only by the additional linear cone constraint $W_1-\delta I_n\succeq 0$. Consequently, all theoretical results from both \cite{ref1} and this paper extend naturally to analyses involving $\mathscr{C}^\prime$. Therefore, we also maintain assumption \eqref{imp-ass} throughout this paper.

\end{remark}

\begin{remark}\label{explain_ass12}
When $F$ is known, the classic infinite-horizon LQ problem
is equivalent to the following convex optimization problem \cite{Zhou-1996}
\begin{equation}\label{classical_convex_parameterization}
         \begin{aligned}
           \min_{W\in\mathbb{S}^p}~~&\langle R,W\rangle\\
           {\rm s.t.}~~&W\in\mathscr{C}.
         \end{aligned}
\end{equation}
However, we aim to extend the applicability to scenarios where $F$ is unknown but convex-bounded, as specified in Assumption \ref{ass2}. Notably, LQ systems with unknown but convex-bounded parameters are extensively studied in Linear Parameter Varying (LPV) control, which finds widespread applications across various industrial sectors; see Section 10.2.3 of \cite{sename2013robust} for details. In this case, according to Theorem \ref{thm_para},  optimization problem \eqref{classical_convex_parameterization} no longer directly solves the classic LQ problem, as the objective function in \eqref{classical_convex_parameterization} merely provides an upper bound for the performance criterion $J(K)$.
\end{remark}

Based on Theorem \ref{thm_para}, we reformulate the SF-LQ problem as the following optimization problem:
\begin{equation}\label{sparse_feedback_2}
         \begin{aligned}
           \min\limits_{W\in\mathbb{S}^p \atop K\in\mathbb{R}^{m\times n}}~~&\langle R,W\rangle+\gamma \Vert K\Vert_{s,t;0}\\
           {\rm s.t.}~~&W\in\mathscr{C},\\
           &K=W_2^\top W_1^{-1}.
         \end{aligned}
\end{equation}
Due to the nonlinear manifold constraint $K=W_2^\top W_1^{-1}$, optimization problem \eqref{sparse_feedback_2} exhibits substantial challenges. For any given binary-valued block matrix $X=\mathbf{block}_{s,t}(X_{11},\dots,X_{st})\in\mathbb{R}^{m\times n}$ with $X_{ij}\in\{\mathbf{0}_{m_i\times n_j},\mathbf{1}_{m_i\times n_j}\}$ and denoting
\begin{equation*}
\begin{aligned}
  \mathbf{Sparse}(X)=\{&Y=\mathbf{block}_{s,t}(Y_{11},\dots,Y_{st})\in\mathbb{R}^{m\times n}\colon \text{ if }X_{ij}=\mathbf{0}_{m_i\times n_j}\\
   &\text{ then } Y_{ij}=\mathbf{0}_{m_i\times n_j}, i=1,\dots,s;~j=1,\dots,t\},
\end{aligned}
\end{equation*}
it holds that
\begin{equation*}
  \begin{aligned}
&\text{if~~}W_2^\top \in \mathbf{Sparse}(X) \text{ and } W_1 \in \mathbf{Sparse}(\mathbf{blockdiag}(I_{n_1},\dots,I_{n_t})), \\
&\text{then~~}W_2^\top W_1^{-1} \in \mathbf{Sparse}(X).
\end{aligned}
\end{equation*}
Therefore, when $W_1\in \mathbf{Sparse}(\mathbf{blockdiag}(I_{n_1},\dots,I_{n_t}))$, the feedback gain $K=W_2^\top W_1^{-1}$ inherits the same sparsity pattern as $W_2^\top$, allowing us to relax the minimization of $\Vert K\Vert_{s,t;0}$ to minimizing $\Vert W_2^\top\Vert_{s,t;0}$ while imposing the additional linear constraint $W_1\in \mathbf{Sparse}(\mathbf{blockdiag}(I_{n_1},\dots,I_{n_t}))$. Consequently, optimization problem \eqref{sparse_feedback_2} can be relaxed to the following form
\begin{equation}\label{sparse_feedback_3}
         \begin{aligned}
           \min\limits_{W\in\mathbb{S}^p}~~&\langle R,W\rangle+\gamma \Vert W_2^\top\Vert_0\\
           {\rm s.t.}~~&W\in\mathscr{C},\\
           &W_1\in \mathbf{Sparse}(\mathbf{blockdiag}(I_{n_1},\dots,I_{n_t})).
         \end{aligned}
\end{equation}

Denote $N=t(t-1)/2$ and mappings $\xi:\{1,\dots,N\}\to\{1,\dots,t-1\},\kappa:\{1,\dots,N\}\to\{2,\dots,t\}$
\begin{equation*}
\begin{aligned}
  &\xi(j)=\left\{
  \begin{aligned}
  &1,~~\text{if}~~j\in\{1,\dots,t-1\},\\
  &2,~~\text{if}~~j\in\{t,\dots,2t-3\},\\
  &\vdots\\
  &t-1~~\text{if}~~j\in\{ N\},\\
  \end{aligned}
  \right.~~\kappa(j)=\left\{
  \begin{aligned}
  &2,~~\text{if}~~j\in\{1\},\\
  &3,~~\text{if}~~j\in\{2,t\},\\
  %&4,~~\text{if}~~j\in\{3,t+1,2t-2\},\\
  &\vdots\\
  &t,~~\text{if}~~j\in\{t-1,2t-3,3t-6,\dots,N\}.\\
  \end{aligned}
  \right.
\end{aligned}
\end{equation*}
Let constraint matrices be
\begin{equation*}
\begin{aligned}
  &V_{j1}=\begin{bmatrix}
           0_{n_i\times \sum_{p=1}^{i-1}n_p} & I_{n_i} & 0_{n_i\times\left(\sum_{p=i+1}^{t} n_p+m\right)}
         \end{bmatrix},~~i=\xi(j),\\
  &V_{j2}^\top=\begin{bmatrix}
0_{n_i\times \sum_{p=1}^{i-1}n_p} & I_{n_i} & 0_{n_i\times\left(\sum_{p=i+1}^{t} n_p+m\right)}
               \end{bmatrix},~~i=\kappa(j),
\end{aligned}
\end{equation*}
and then problem \eqref{sparse_feedback_3} can be equivalently converted to
\begin{equation}\label{sparse_feedback_4}
\begin{aligned}
  \min_{W,P}~~&\langle R,W\rangle+\gamma \Vert P\Vert_{s,t; P_{11},\dots, P_{st}; 0}\\
  {\rm s.t.}~~&W\in\mathbb{S}^p_+,\\
  &-V_2(F_iW+WF_i^\top+Q)V_2^\top\in\mathbb{S}_+^{n},\forall i=1,\dots,M,\\
  &-V_{j1}WV_{j2}=0,\forall j=1,\dots,N\\
  &V_1WV_2^\top-P=0
\end{aligned}
\end{equation}
with $P_{ij}\in\mathbb{R}^{m_i\times n_j},\gamma>0$, $V_1=[0,I_m],V_2=[I_n,0]$. Denoting
$$\Psi_i=-V_2(F_iW+WF_i^\top+Q)V_2^\top,\forall i=1,\dots,M,$$
optimization problem \eqref{sparse_feedback_4} can be rewritten by the following compact form:
\begin{equation}\label{sparse_feedback_5}
  \begin{aligned}
  \min_{W,P}~~&\langle R,W\rangle+\gamma \Vert P\Vert_{s,t; 0}\\
  {\rm s.t.}~~&\mathcal{G}(W)\in\mathcal{K},\\
  &V_1WV_2^\top-P=0
  \end{aligned}
\end{equation}
with
\begin{equation*}
  \begin{aligned}
&\mathcal{G}(W)=(W,\Psi_1,\dots,\Psi_M,V_{11}WV_{12},\dots,V_{N1}WV_{N2}),\\
&\mathcal{K}=\mathbb{S}_+^p\times\mathbb{S}_+^n\times\cdots\times\mathbb{S}_+^n\times 0\times\cdots\times 0.
  \end{aligned}
\end{equation*}
Let $\widetilde{W}={\rm vec}(W),\widetilde{P}={\rm vec}(P), \widetilde{\Psi}_i={\rm vec}(\Psi_i)$ for $i=1,\dots,M$ and denote $\mathcal{A},\mathcal{B}$ with
\begin{equation*}
  \mathcal{A}=\begin{pmatrix}
  V_{12}^\top\otimes V_{11}\\
  \vdots\\
  V_{N2}^\top\otimes V_{N1}\\
  V_2\otimes V_1
  \end{pmatrix}\doteq \begin{pmatrix}
    \widehat{\mathcal{A}}\\
    V_2\otimes V_1
  \end{pmatrix},~~
  \mathcal{B}=\begin{pmatrix}
  0\\
  \vdots\\
  0\\
  -I_{mn}
  \end{pmatrix},
\end{equation*}
where $\widehat{\mathcal{A}}\in\mathbb{R}^{N^*\times p^2}$ and $N^*$ represents the total number of rows in $\widehat{\mathcal{A}}$ ($N^*$ can be directly computed from the problem dimensions).
Optimization problem \eqref{sparse_feedback_5} can be equivalently formulated by the following standard form:
\begin{equation}\label{sparse_feedback_6}
\begin{aligned}
\min_{\widetilde{W},\widetilde{P}}~~ &f(\widetilde{W})+g(\widetilde{P})\\
{\rm s.t.}~~&\mathcal{A}\widetilde{W}+\mathcal{B}\widetilde{P}=0
\end{aligned}
\end{equation}
with
\begin{align*}
&f(\widetilde{W})=\langle {\rm vec}(R),\widetilde{W}\rangle+\delta_{\Gamma_+^p}(\widetilde{W})+\delta_{\Gamma_+^n}(\widetilde{\Psi}_1)+\cdots+\delta_{\Gamma_+^n}(\widetilde{\Psi}_M),\\
&g(\widetilde{P})=\gamma \Vert \mathrm{vec}^{-1}(\widetilde{P})\Vert_{s,t; 0}.
\end{align*}

\subsection{DFT-LQ Formulation}\label{section_link}

DFT-LQ problems have been extensively studied in the literature due to their numerous advantages, including scalability, robustness, flexibility, and computational efficiency; see \cite{i9,i12,b6,i10,i11} and references. In this subsection, we establish the relationship between SF-LQ and DFT-LQ problems, demonstrating that SF-LQ problems can be viewed as a generalization of DFT-LQ problems. This connection allows us to solve DFT-LQ problems through solving the optimization problem \eqref{sparse_feedback_6}.

Generally, a group-sparse feedback gain $K$ can lead to a multi-agent distributed control system. For example, given a group-sparse feedback gain
  \begin{equation*}
    K=\begin{bmatrix}
      K_{11} & \cdots & K_{1t}\\
      \vdots & \ddots & \vdots\\
      K_{s1} & \cdots & K_{st}
    \end{bmatrix}\in\mathbb{R}^{m\times n},\text{~i.e.,~}K=\mathbf{block}_{s,t}(K_{11},\dots,K_{st})
  \end{equation*}
  with $K_{ij}\in\mathbb{R}^{m_i\times n_j}$ and $\sum_{i=1}^{s} m_i=m,\sum_{j=1}^{t}n_j=n$, it can induce a $p$-agents distributed system.
  Specifically, the global control action is composed of local control actions: $u(t)=[u_1(t)^\top,\dots,u_s(t)^\top]^\top$, where $u_i(t)\in\mathbb{R}^{m_i}$ is the control input of
  agent $i$. At time $t$, agent $i$ directly observes a partial state $x_{\mathcal{I}_i}(t)$, and $\mathcal{I}_i$ is given by
  \begin{equation*}
    \mathcal{I}_i=\bigcup_{j=1,\dots,t,~K_{ij}\neq 0}\left\{n_{j-1}+1,\dots,n_j\right\}\subseteq [n], \text{~with~}n_0=0.
  \end{equation*}
It is worth noting that in the $s$-agent distributed systems studied in \cite{ref1,i12,i9}, the observation sets are disjoint:
\begin{equation*}
  \mathcal{I}_i \cap \mathcal{I}_j =\emptyset,~~\forall i,j\in[s],~i\neq j.
\end{equation*}
In contrast, our framework allows the observation sets $\mathcal{I}_i$ and $\mathcal{I}_j$ to overlap, making it a strict generalization of the problems considered in \cite{ref1,i12,i9}.

Similarly, DFT-LQ problem \eqref{intro-1} can be relaxed into the following form
\begin{equation}\label{fixed}
  \begin{aligned}
  \min_{W,P}~~&\langle R,W\rangle\\
  {\rm s.t.}~~&\mathcal{G}(W)\in\mathcal{K},\\
  &[V_1WV_2^\top]_{ij}=0,~(i,j)\in\mathbf{U}\subseteq [m]\times [n],
  \end{aligned}
\end{equation}
which can be regarded as a modification of problem \eqref{sparse_feedback_4}.
By solving problem \eqref{fixed}, the controller with a fixed communication topology can be obtained.
Furthermore, rather than specifying a fixed communication topology, we may wish to simply constrain that certain agents cannot directly observe specific states, while simultaneously keeping the overall communication cost low through group sparsity. Specifically, suppose we want agent $i_k$ to be unable to directly observe state $x_{j_k}(t)$ for $k=1,\dots,p$, while promoting a group sparse feedback structure. In that case, we can modify problem \eqref{fixed} as follows:
\begin{equation}\label{fixed2}
  \begin{aligned}
  \min_{W,P}~~&\langle R,W\rangle+\gamma \Vert P\Vert_{s,t;0}\\
  {\rm s.t.}~~&\mathcal{G}(W)\in\mathcal{K},\\
  &V_1WV_2^\top-P=0,\\
  &[V_1WV_2^\top]_{i_kj_k}=0,~k=1,\dots,p\\
  \end{aligned}
\end{equation}
with predetermined $(i_k,j_k)\in[m]\times [n]$ for $k=1,\dots,p$. Here, the group sparsity term $\Vert P\Vert_{s,t;0}$ encourages a minimal communication topology while respecting the hard constraints on the forbidden connections. Remarkably, problem \eqref{fixed} can be seen as a relaxation problem of
\begin{equation}\label{fixed3}
  \min_{K\in\mathcal{S}}~J(K)+\gamma \Vert K\Vert_0,~~{\rm s.t.}~~K\in{\mathcal{K}},
\end{equation}
which unifies the DFT-LQ and SF-LQ problems into a single formulation.
To the best of our knowledge, this particular formulation of designing a group sparse feedback gain $K$ while enforcing specific entries $[K]_{i_kj_k}=0$ for $k=1,\dots,p$ represents a novel contribution to the literature.

It is worth emphasizing that problem \eqref{fixed} is a convex optimization problem with semidefinite cone and affine constraints. This problem can be solved efficiently by using the methodology introduced by Feng (2024) \cite{feng2024two}, with details provided in Section 3 of that work. Furthermore, problem \eqref{fixed2} differs from problem \eqref{sparse_feedback_6} only in having additional linear constraints on $W$. Hence, problem \eqref{fixed2} can be rewritten in the following standard form:
\begin{equation}\label{joint}
\begin{aligned}
\min_{\widetilde{W},\widetilde{P}}~~ &f(\widetilde{W})+g(\widetilde{P})\\
{\rm s.t.}~~&\mathcal{A}_{\rm new}\widetilde{W}+\mathcal{B}_{\rm new}\widetilde{P}=0,
\end{aligned}
\end{equation}
where matrices $\mathcal{A}_{\rm new}$ and $\mathcal{B}_{\rm new}$ are simply $\mathcal{A}$ and $\mathcal{B}$ augmented with additional rows to encode the extra linear constraints. Since the only difference between problems \eqref{joint} and \eqref{sparse_feedback_6} is these additional linear constraints in the same form, they are essentially equivalent from both algorithmic and convergence analysis perspectives. Therefore, in the remainder of this paper, we study the DFT-LQ and SF-LQ problems through the lens of problem \eqref{sparse_feedback_6}.

\section{Penalty Function Framework and PALM Algorithm}

Due to the nonconvex and nonsmooth nature of optimization problem \eqref{sparse_feedback_6} with linear constraints, finding a solution is extremely challenging. To address this, we employ a penalty function approach by introducing the following unconstrained optimization problem:
\begin{equation}\label{Pen_1}
  \min_{\widetilde{W},\widetilde{P}}~~ F(\widetilde{W},\widetilde{P})= f(\widetilde{W})+g(\widetilde{P})+H(\widetilde{W},\widetilde{P}),
\end{equation}
where the penalty function $H(\widetilde{W},\widetilde{P})$ takes the form
$$H(\widetilde{W},\widetilde{P})=\frac{\rho}{2}\Vert\mathcal{A}\widetilde{W}+\mathcal{B}\widetilde{P} \Vert^2$$
with penalty parameter $\rho>0$.
As $\rho\to\infty$, optimization problem \eqref{sparse_feedback_6} becomes equivalent to optimization problem \eqref{Pen_1}. In this section, we design an efficient algorithm for solving problem \eqref{Pen_1} and establish its convergence analysis. Let $\mu,\beta,\tau>0$ be parameters with $0<\sigma\leq1$. Given an initial point $(\widetilde{W}_0,\widetilde{P}_0,z_0,u_0)$, we consider the following iterative scheme, known as PALM:
\begin{equation}\label{PAML}
  \begin{aligned}
  &\widetilde{P}_{n+1}\in \mathbf{prox}_{\mu^{-1}g}(\widetilde{P}_n-\mu^{-1}\nabla_{\widetilde{P}}H(\widetilde{W}_n,\widetilde{P}_n)),\\
  &z_{n+1}\in\mathbf{prox}_{\beta^{-1}f}(\widetilde{W}_n+\beta^{-1}u_n),\\
  &\widetilde{W}_{n+1}=\widetilde{W}_n-\tau^{-1}(\nabla_{\widetilde{W}}H(\widetilde{W}_n,\widetilde{P}_{n+1})+u_n+\beta(\widetilde{W}_n-z_{n+1})),\\
  &u_{n+1}=u_n+\sigma \beta (\widetilde{W}_{n+1}-z_{n+1}).
  \end{aligned}
\end{equation}
Algorithm \eqref{PAML} was initially introduced by Bolte \textit{et al.} (2014) \cite{bolte2014proximal}, with subsequent extensions to its applicability developed by B{o}\c{t} \textit{et al.} (2019, 2020) \cite{bot2019proximal,boct2020proximal}. A key feature of the convergence analysis in these works is its reliance on the coercivity of the functions $f$, $g$, and $H$  (a condition explicitly required in Theorem 2.10 of \cite{bot2019proximal}). However, in our optimization problem \eqref{Pen_1}, both functions $g$ and $H$ lack this coercivity property. Consequently, the existing convergence theory for algorithm \eqref{PAML} cannot be directly applied to problem \eqref{Pen_1}, necessitating a new convergence analysis framework.

\subsection{Solver of Subproblem w.r.t. $z_n$}\label{subsubzn}

In this subsection, we will design the solver for subproblem w.r.t. $z_n$:
\begin{equation*}
  z_{n+1}\in\mathbf{prox}_{\beta^{-1}f}(\widetilde{W}_n+\beta^{-1}u_n),
\end{equation*}
which is equivalent to the following convex problem:
\begin{equation}\label{zn-1}
\begin{aligned}
\min_{{\rm svec}(W)}~& \langle \mathrm{vec}(R),D_1\mathrm{svec}(W)\rangle+\frac{\beta}{2}\Vert D_1\mathrm{svec}(\widetilde{W})-\xi_n\Vert^2\\
{\rm s.t.}~~~& {\rm svec}(W)\in\widetilde{\Gamma}_+^p,\\
&{\rm svec}(\Psi_i)\in\widetilde{\Gamma}_+^n,~i=1,\dots,M;
\end{aligned}
\end{equation}
here $\xi_n=\widetilde{W}_n+\beta^{-1}u_n$, $\widetilde{\Gamma}_+^p=\{ {\rm svec}(W)\colon W\in\mathbb{S}_+^p\}$, $\mathrm{svec}(\cdot)$ is the vectorization operator on $\mathbb{S}^n$ defined by
\begin{equation*}
  {\rm svec}(S)=\left[s_{11},s_{12},\cdots,s_{n1},\cdots,s_{nn}\right]^\top\in\mathbb{R}^{\frac{n(n+1)}{2}}
\end{equation*}
for $S=(s_{ij})\in\mathbb{S}^n$, and there exist matrices $T_1,T_2,D_1,D_2$ such that
\begin{equation*}
  \begin{aligned}
  &{\rm svec}(W)=T_1{\rm vec}(W),~~{\rm svec}(\Psi_i)=T_2{\rm vec}(\Psi_i),\\
  &{\rm vec}(W)=D_1{\rm svec}(W),~~{\rm vec}(\Psi_i)=D_2{\rm svec}(\Psi_i).
  \end{aligned}
\end{equation*}
Remarkably, matrices $T_1,T_2,D_1,D_2$ can be explicitly expressed; see Remark 6 of \cite{feng2024two} for details.

\begin{assumption}\label{slater}
  Problem \eqref{zn-1} is strictly feasible, i.e., Slater's condition is satisfied and strong duality holds.
\end{assumption}

\begin{remark}
This assumption is commonly used when solving DFT-LQ and SF-LQ problems via optimization landscape \cite{ref1,i12,i9}. While the general conditions under which Assumption \ref{slater} holds remain an open question. However, Zheng \textit{et al.} (2019) \cite{i9} provide valuable insights from the perspective of block-diagonal strategies when $(A,B_2)$ is known. Let us denote the feasible set of optimization problem \eqref{zn-1} by $D$, and define
$$\widetilde{D}=\{W_2\in\mathbb{R}^{n\times m}\colon W_2=\mathbf{blockdiag}\{W_{2,1},\dots,W_{2,p}\}\},$$
where $W_{2,j}\in\mathbb{R}^{n_j\times m_j}$ for $j=1,\dots,p$ with $\sum_{j=1}^p m_j=m$ and $\sum_{j=1}^p n_j=n$. According to \cite{i9}, $D\cap\widetilde{D}$ is nonempty when system \eqref{system} satisfies any of the following conditions:
\begin{itemize}
\item fully actuated,
\item weakly coupled in terms of topological connections,
\item weakly coupled in terms of dynamical interactions.
\end{itemize}
Under any of these conditions, problem \eqref{zn-1} is feasible. Furthermore, since $B_1B_1^\top\succ 0$ and $W_3$ can be freely chosen to ensure $W\succ 0$ (by the Schur complement lemma), problem \eqref{zn-1} is strictly feasible, thus satisfying Assumption \ref{slater}. Additionally, the feasible set of \eqref{zn-1} in our formulation is larger than those in \cite{ref1,i12,i14,i9}, as we impose no structural constraints on $W_2$.
\end{remark}

\begin{lemma}
The dual problem of optimization problem \eqref{zn-1} exhibits an explicit form of quadratic optimization with affine cone constraints.
\end{lemma}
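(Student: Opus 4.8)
The plan is to compute the Lagrangian dual of \eqref{zn-1} in closed form and read off its structure. Writing $w:=\mathrm{svec}(W)$, the objective of \eqref{zn-1} is the strongly convex quadratic $\phi(w)=\langle D_1^\top\mathrm{vec}(R),w\rangle+\tfrac{\beta}{2}\|D_1 w-\xi_n\|^2$, whose Hessian is $\beta D_1^\top D_1$; this matrix is positive definite because $D_1$ (which maps $\mathrm{svec}$ to $\mathrm{vec}$) has full column rank — in fact $D_1^\top D_1$ is diagonal with entries $1$ at positions corresponding to diagonal elements of $W$ and $2$ at positions corresponding to off-diagonal elements, by the explicit form of $D_1$ recalled in Remark 6 of \cite{feng2024two}. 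The remaining constraints of \eqref{zn-1} are the conic memberships $W\in\mathbb{S}_+^p$ and $\Psi_i=\Psi_i(W)\in\mathbb{S}_+^n$, $i=1,\dots,M$, each $\Psi_i$ being an affine function of $W$; there are no equality constraints. I would introduce symmetric dual multipliers $S_0\in\mathbb{S}^p$ for $W\succeq 0$ and $S_i\in\mathbb{S}^n$ for $\Psi_i\succeq 0$, and form the Lagrangian, written in the coordinate $w=\mathrm{svec}(W)$,
\[
L(w,S)=\langle D_1^\top\mathrm{vec}(R),w\rangle+\tfrac{\beta}{2}\|D_1 w-\xi_n\|^2-\langle S_0,W\rangle-\sum_{i=1}^{M}\langle S_i,\Psi_i(W)\rangle .
\]

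The first key step is to rewrite every term as a function of $w$. For the coupling terms this uses the adjoint of $\Psi_i$: from $\Psi_i(W)=-V_2(F_iW+WF_i^\top+Q)V_2^\top$ one obtains $\langle S_i,\Psi_i(W)\rangle=\langle\Psi_i^{\ast}(S_i),W\rangle-\langle S_i,V_2QV_2^\top\rangle$ with $\Psi_i^{\ast}(S_i)=-\bigl(F_i^\top V_2^\top S_i V_2+V_2^\top S_i V_2 F_i\bigr)$, linear in $S_i$; likewise $\langle S_0,W\rangle=\langle D_1^\top\mathrm{vec}(S_0),w\rangle$ and $\langle\Psi_i^{\ast}(S_i),W\rangle=\langle D_1^\top\mathrm{vec}(\Psi_i^{\ast}(S_i)),w\rangle$, where $\mathrm{vec}(\Psi_i^{\ast}(S_i))$ is an explicit linear image of $\mathrm{vec}(S_i)$ through Kronecker products (e.g.\ $\mathrm{vec}(F_i^\top V_2^\top S_i V_2)=(V_2^\top\otimes F_i^\top V_2^\top)\mathrm{vec}(S_i)$). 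Collecting terms gives $L(w,S)=\tfrac{\beta}{2}w^\top(D_1^\top D_1)w+b(S)^\top w+c(S)$ with $b(S)$ affine in $(S_0,\dots,S_M)$ and $c(S)$ independent of $w$. The second step: since $D_1^\top D_1\succ 0$, for each fixed $S$ the map $w\mapsto L(w,S)$ is strongly convex and its infimum is attained at $w^{\star}(S)=-\beta^{-1}(D_1^\top D_1)^{-1}b(S)$, with value
\[
d(S)=c(S)-\tfrac{1}{2\beta}\,b(S)^\top(D_1^\top D_1)^{-1}b(S),
\]
a concave quadratic in $S$ whose Hessian is explicitly assembled from $(D_1^\top D_1)^{-1}$ and the linear maps $S_0\mapsto D_1^\top\mathrm{vec}(S_0)$, $S_i\mapsto D_1^\top\mathrm{vec}(\Psi_i^{\ast}(S_i))$. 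The third step: the positive semidefinite cone is self-dual, so the dual feasible set is exactly $\{S_0\in\mathbb{S}_+^p\}\times\prod_{i=1}^{M}\{S_i\in\mathbb{S}_+^n\}$, i.e.\ $\mathrm{svec}(S_0)\in\widetilde{\Gamma}_+^p$ and $\mathrm{svec}(S_i)\in\widetilde{\Gamma}_+^n$ — affine (indeed linear) cone constraints — and no further affine constraints on $S$ appear since the primal has no equality constraints. Hence the dual problem $\max_S d(S)$, equivalently $\min_S\{-d(S)\}$, is a quadratic optimization problem with affine cone constraints, which is precisely the asserted form; strong duality and attainment of a dual optimizer follow from Assumption \ref{slater}.

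The conceptual content here is standard — the Lagrangian dual of a strongly convex quadratic program is a concave quadratic program, and the semidefinite cone is self-dual — so the main obstacle is the explicit bookkeeping. Concretely, one must (i) correctly derive the adjoint $\Psi_i^{\ast}$ of the affine map $W\mapsto\Psi_i(W)$; (ii) track the change of coordinates between the $\mathrm{vec}$ representation, in which the linear term $\langle\mathrm{vec}(R),\widetilde W\rangle$ and the proximal term $\|\widetilde W-\xi_n\|^2$ are naturally written, and the $\mathrm{svec}$ representation $w=\mathrm{svec}(W)$ that is the actual decision variable, using the explicit duplication-type matrices $D_1,D_2,T_1,T_2$; and (iii) verify that $D_1^\top D_1$ is nonsingular, so that the inner minimization over $w$ is finite for every dual-feasible $S$ and the dual reduces cleanly to a finite quadratic with only the cone constraints and no hidden implicit constraints. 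Carrying out (i)--(iii) and assembling the pieces yields the closed-form concave quadratic objective $d(S)$ and completes the proof.
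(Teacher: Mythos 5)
Your proposal is correct and follows essentially the same route as the paper: form the Lagrangian with one multiplier per conic constraint, exploit the invertibility of $D_1^\top D_1$ to minimize the strongly convex quadratic over $\mathrm{svec}(W)$ in closed form, and substitute back to obtain a quadratic dual objective subject only to semidefinite cone constraints. The only cosmetic difference is that you pair the multipliers with the matrices $W$, $\Psi_i(W)$ via the Frobenius inner product and pass through the adjoint $\Psi_i^{\ast}$, whereas the paper keeps the multipliers in the $\mathrm{svec}$/$\mathrm{vec}$ coordinates and expresses the coupling through Kronecker products; the two are related by the fixed change of coordinates $T_2, D_1$ and yield the same dual.
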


\begin{proof}
Consider the Lagrangian for problem \eqref{zn-1}, introducing dual variables $X_0$ for the constraint ${\rm svec}(W)\in\widetilde{\Gamma}_+^p$ and $X_i$ for the constraints ${\rm svec}(\Psi_i)\in\widetilde{\Gamma}_+^n$ ($i=1,\dots,M$):
\[
\begin{aligned}
\mathcal{L}(\mathrm{svec}(W); X) =\; & \langle \mathrm{vec}(R), D_1 \mathrm{svec}(W) \rangle + \frac{\beta}{2} \| D_1 \mathrm{svec}(W) - \xi_n \|^2 \\
& - \langle X_0, \mathrm{svec}(W) \rangle + \sum_{i=1}^M \langle X_i, T_2 (V_2 \otimes V_2) \mathrm{vec}(Q) \rangle \\
& + \sum_{i=1}^M \langle X_i, T_2 \left[ (V_2 \otimes V_2 F_i) + (V_2 F_i \otimes V_2) \right] D_1 \mathrm{svec}(W) \rangle.
\end{aligned}
\]
Let us collect all the terms linear in $\mathrm{svec}(W)$ and define
\[
d = D_1^\top \mathrm{vec}(R) - X_0 + D_1^\top \sum_{i=1}^M \left[ (V_2 \otimes V_2 F_i)^\top + (V_2 F_i \otimes V_2)^\top \right] T_2^\top X_i.
\]
Then, the Lagrangian simplifies to
\[
\mathcal{L}(\mathrm{svec}(W); X) = \langle d, \mathrm{svec}(W) \rangle + \frac{\beta}{2} \| D_1 \mathrm{svec}(W) - \xi_n \|^2 + \sum_{i=1}^M \langle X_i, T_2 (V_2 \otimes V_2) \mathrm{vec}(Q) \rangle.
\]
The dual function is obtained by minimizing the Lagrangian with respect to $\mathrm{svec}(W)$:
\[
\Theta(X) = \min_{\mathrm{svec}(W)} \mathcal{L}(\mathrm{svec}(W); X).
\]
This is a strictly convex quadratic function in $\mathrm{svec}(W)$, so the minimum is achieved at the stationary point:
\[
\nabla_{\mathrm{svec}(W)} \mathcal{L} = 0 \implies d + \beta D_1^\top (D_1 \mathrm{svec}(W) - \xi_n) = 0.
\]
Solving for $\mathrm{svec}(W)$, we obtain
\begin{equation}\label{sol-svecW}
\mathrm{svec}(W) = (D_1^\top D_1)^{-1} \left( D_1^\top \xi_n - \frac{1}{\beta} d \right).
\end{equation}

Substituting this back into the Lagrangian yields the dual function:
\[
\begin{aligned}
\Theta(X) =\; & \sum_{i=1}^M \langle X_i, T_2 (V_2 \otimes V_2) \mathrm{vec}(Q) \rangle - \frac{1}{2\beta} \langle d, (D_1^\top D_1)^{-1} d \rangle \\
&+ \langle d, (D_1^\top D_1)^{-1} D_1^\top \xi_n \rangle  + \frac{\beta}{2} \| D_1 (D_1^\top D_1)^{-1} D_1^\top \xi_n - \xi_n \|^2.
\end{aligned}
\]
Note that the last term is independent of the dual variables $X$.
Therefore, the dual problem is:
\begin{equation}\label{dual1}
\begin{aligned}
\max_{X_0, X_1, \dots, X_M} \quad & \Theta(X) \\
\text{subject to} \quad & X_0 \in \widetilde{\Gamma}_+^p, \\
& X_i \in \widetilde{\Gamma}_+^n, \quad i=1,\dots,M.
\end{aligned}
\end{equation}
Since $d$ is affine in $X$, the dual objective is a quadratic function of $X$, and the constraints are semidefinite cones. Thus, the dual problem is a quadratic optimization problem with semidefinite cone constraints.
\end{proof}

Letting
\begin{equation*}
\begin{aligned}
  \Omega(X_0,X_1,\dots,X_M)=&-\langle X_1+\dots+X_M,T_2(V_2\otimes V_2){\rm vec}(Q)\rangle\\
  &-\langle \mathcal{L}^\star(X_0,X_1,\dots,X_M),(D_1^\top D_1)^{-1}D_1^\top\xi_n \rangle\\
  &+\frac{1}{2\beta}\Vert (D_1^\top D_1)^{-\frac{1}{2}}\mathcal{L}^\star(X_0,X_1,\dots,X_M)\Vert^2\\
  &+\delta_{\widetilde{\Gamma}_+^p}(X_0)+\delta_{\widetilde{\Gamma}_+^n}(X_1)+\dots+\delta_{\widetilde{\Gamma}_+^n}(X_M),
\end{aligned}
\end{equation*}
then optimization problem \eqref{dual1} can be rewritten in the following compact form
\begin{equation}\label{dual2}
    \min_{X=(X_0\dots,X_M)}~\Omega(X_0,X_1,\dots,X_M).
\end{equation}

In what follows, we focus on solving optimization problem \eqref{dual2} using the proximal-BCD (pBCD) algorithm. The algorithm follows an iterative update scheme:
\begin{equation}\label{update-pbcd}
  X_i^{k+1}\in \underset{X_i}{\operatorname*{argmin}}~ \Omega(X_0^{k+1},X_1^{k+1},\dots,X_{i-1}^{k+1},X_i,X_{i+1}^{k},\dots,X_M^{k})+\frac{1}{2}\left\Vert X_i-X_i^{k}\right\Vert_{\mathcal{S}_i}^2,~~i=0,\dots,M
\end{equation}
with given initial point $X^0=(X_0^0,X_1^0,\dots,X_M^0)$, where $\{\mathcal{S}_i\}_{i=0}^{M}$ is a sequence of given positive linear operators.
Remarkably, the pBCD update rule \eqref{update-pbcd} involves nonsmooth subproblems, which often necessitates the design of iterative algorithms (e.g., subgradient descent method). This substantially increases the computational complexity of the pBCD algorithm. However, due to the special structure of Lagrange dual function $\Theta$ and PSD cone $\mathbb{S}^n_+$, we may choose a special sequence of operators $\{\mathcal{S}_i\}_{i=0}^{M}$ such that pBCD update scheme \eqref{update-pbcd} admits a closed-form solution.

\begin{lemma}
For problem \eqref{dual2}, there exists a sequence of positive linear operators $\{\mathcal{S}_i\}_{i=0}^{M}$ such that when applying the pBCD algorithm, each subproblem
\begin{equation*}
  \min_{X_i}~\Omega(X_0^{k+1},X_1^{k+1},\dots,X_{i-1}^{k+1},X_i,X_{i+1}^{k},\dots,X_M^{k})+\frac{1}{2}\left\Vert X_i-X_i^{k}\right\Vert_{\mathcal{S}_i}^2
\end{equation*}
admits a closed-form solution (see \eqref{u1-chap3} and \eqref{u2-chap3} below) for all iterations $k\geq 0$ and blocks $i=0,\dots,M$.
\end{lemma}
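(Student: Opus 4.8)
The plan is to exploit the fact that, away from the conic indicator terms, $\Omega$ is a convex quadratic function of the stacked variable $X=(X_0,X_1,\dots,X_M)$, and to choose the proximal metrics $\mathcal{S}_i$ so that, block by block, they exactly cancel the quadratic part of $\Omega$. Once this cancellation is in place, the $i$-th update in \eqref{update-pbcd} collapses to a weighted projection onto the positive semidefinite cone $\widetilde{\Gamma}_+^{n}$ (and onto $\widetilde{\Gamma}_+^{p}$ when $i=0$), which is available in closed form by spectral thresholding.

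First I would read off from the proof of the previous lemma that the $X$-dependent part of $d$ is linear, $\mathcal{L}^\star(X)=\sum_{j=0}^{M}\mathcal{L}_j^\star X_j$, with $\mathcal{L}_0^\star X_0=-X_0$ and $\mathcal{L}_i^\star X_i=D_1^\top\big[(V_2\otimes V_2F_i)^\top+(V_2F_i\otimes V_2)^\top\big]T_2^\top X_i$ for $i\ge1$. Substituting this into the definition of $\Omega$, the only term that is quadratic in a fixed block $X_i$ is $\tfrac{1}{2\beta}\big\langle\mathcal{L}_i^\star X_i,(D_1^\top D_1)^{-1}\mathcal{L}_i^\star X_i\big\rangle=\tfrac12\langle X_i,\mathcal{Q}_{ii}X_i\rangle$, where $\mathcal{Q}_{ii}:=\tfrac1\beta(\mathcal{L}_i^\star)^{*}(D_1^\top D_1)^{-1}\mathcal{L}_i^\star$ is a fixed, explicitly computable, positive semidefinite self-map with $\Vert\mathcal{Q}_{ii}\Vert\le\tfrac1\beta\Vert\mathcal{L}_i^\star\Vert^2\Vert(D_1^\top D_1)^{-1}\Vert$. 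I would then set $\mathcal{S}_i$ so that $\mathcal{S}_i+\mathcal{Q}_{ii}=\eta_i\mathcal{W}_i$, where $\mathcal{W}_i$ is the positive definite diagonal operator identifying the $\ell_2$ inner product on $\mathrm{svec}$-coordinates with the Frobenius inner product on symmetric matrices, and $\eta_i$ is any constant large enough that $\mathcal{S}_i=\eta_i\mathcal{W}_i-\mathcal{Q}_{ii}\succ0$; with the normalization of Remark 6 of \cite{feng2024two} one may simply take $\mathcal{W}_i=\mathcal{I}$, i.e.\ $\mathcal{S}_i=\eta_i\mathcal{I}-\mathcal{Q}_{ii}$.

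With this choice, after collecting every term of \eqref{update-pbcd} that is linear in $X_i$ — namely $-\langle X_i,T_2(V_2\otimes V_2)\mathrm{vec}(Q)\rangle$ (for $i\ge1$), the cross terms $\tfrac1\beta\big\langle X_i,(\mathcal{L}_i^\star)^{*}(D_1^\top D_1)^{-1}\sum_{j\ne i}\mathcal{L}_j^\star X_j\big\rangle$ at the currently available iterates, the term $-\langle X_i,(\mathcal{L}_i^\star)^{*}(D_1^\top D_1)^{-1}D_1^\top\xi_n\rangle$, and the proximal-center contribution $-\langle X_i,\mathcal{S}_iX_i^{k}\rangle$ — the $i$-th subproblem becomes
\[
\min_{X_i}\ \tfrac{\eta_i}{2}\langle X_i,\mathcal{W}_iX_i\rangle+\langle b_i^{k},X_i\rangle+\delta_{\widetilde{\Gamma}_+^{n}}(X_i)+\mathrm{const},
\]
with $b_i^{k}$ an explicit affine function of $X_0^{k+1},\dots,X_{i-1}^{k+1},X_{i+1}^{k},\dots,X_M^{k}$ and of the data $\xi_n,Q,R$ (and with $\widetilde{\Gamma}_+^{p}$ in place of $\widetilde{\Gamma}_+^{n}$ when $i=0$). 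Its unique minimizer is the $\mathcal{W}_i$-weighted projection of $-\eta_i^{-1}\mathcal{W}_i^{-1}b_i^{k}$ onto $\widetilde{\Gamma}_+^{n}$; since $\mathcal{W}_i$ turns $\mathrm{svec}$ into an isometry, this equals $\mathrm{svec}$ applied to the Frobenius projection onto $\mathbb{S}_+^{n}$ (resp.\ $\mathbb{S}_+^{p}$) of the associated symmetric matrix, and writing the latter in its spectral decomposition $\sum_\ell\lambda_\ell v_\ell v_\ell^\top$ gives the update $\mathrm{svec}\big(\sum_\ell\max(\lambda_\ell,0)\,v_\ell v_\ell^\top\big)$ — exactly the closed forms asserted in the statement.

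I expect the only delicate points to be of a bookkeeping nature: (i) propagating the operator-norm bound on $\mathcal{Q}_{ii}$ through the chain of Kronecker products and the $\mathrm{svec}$/$\mathrm{vec}$ transfer matrices $T_1,T_2,D_1,D_2$ so as to certify $\mathcal{S}_i\succ0$; and (ii) keeping all manipulations in an inner product in which $\widetilde{\Gamma}_+^{n}$ is self-dual and its projection reduces to eigenvalue thresholding, i.e.\ absorbing the weight $\mathcal{W}_i$ (equivalently the factor $D_1^\top D_1$) into the definition of $\mathcal{S}_i$ rather than carrying it through the computation. Once these are handled, writing out $b_i^{k}$ and the spectral projection block by block yields the claimed closed-form updates and completes the proof.
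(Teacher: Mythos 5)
Your proposal is correct and follows essentially the same route as the paper: your $\mathcal{Q}_{ii}$ is exactly the paper's $L_i=\tfrac1\beta T_2\Xi_iD_1(D_1^\top D_1)^{-1}D_1^\top\Xi_i^\top T_2^\top$, and choosing $\mathcal{S}_i=\eta_i\mathcal{I}-\mathcal{Q}_{ii}$ (the paper takes $\eta_i=\max\{\mathrm{eig}(L_i)\}$) cancels the block-quadratic term so that each subproblem reduces to a projection onto $\widetilde{\Gamma}_+^n$ (resp.\ $\widetilde{\Gamma}_+^p$), computable by eigenvalue decomposition. Your additional care about the $\mathrm{svec}$ inner product (absorbing the weight so the projection is a genuine Frobenius eigenvalue thresholding) addresses a detail the paper leaves implicit, but does not change the argument.
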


\begin{proof}
For $i=1,\dots,M$, denote
\begin{equation*}
  \begin{aligned}
  &\Xi_i=V_2\otimes V_2F_i+V_2F_i\otimes V_2,\\
  &\mathcal{L}^\star(X_{-i})=\mathcal{L}^\star(X_0,\dots,X_M)-D_1^\top[(V_2\otimes V_2F_i)^\top+(V_2F_i\otimes V_2)^\top]T_2^\top X_i,
  \end{aligned}
\end{equation*}
i.e., $\mathcal{L}^\star(X_{-i})$ denotes the part of $\mathcal{L}^\star$ independent of $X_i$.
We obtain
\begin{equation*}
\begin{aligned}
  \partial_{X_i}\Omega=&-T_2(V_2\otimes V_2){\rm vec}(Q)-T_2\Xi_iD_1(D_1^\top D_1)^{-1}D_1^\top\xi_n+\frac{1}{\beta}T_2\Xi_iD_1(D_1^\top D_1)^{-1}\mathcal{L}^\star(X_{-i})\\
  &+\frac{1}{\beta}T_2\Xi_iD_1(D_1^\top D_1)^{-1}D_1^\top \Xi_i^\top T_2^\top X_i+\delta_{\widetilde{\Gamma}_+^n}(X_i).
\end{aligned}
\end{equation*}
Letting
\begin{equation*}
  \begin{aligned}
  &H_i(X_{-i})=T_2(V_2\otimes V_2){\rm vec}(Q)+T_2\Xi_iD_1(D_1^\top D_1)^{-1}D_1^\top\xi_n-\frac{1}{\beta}T_2\Xi_iD_1(D_1^\top D_1)^{-1}\mathcal{L}^\star(X_{-i}),\\
  &L_i=\frac{1}{\beta}T_2\Xi_iD_1(D_1^\top D_1)^{-1}D_1^\top \Xi_i^\top T_2^\top,
  \end{aligned}
\end{equation*}
we have
\begin{equation*}
   \partial_{X_i}\Omega=-H_i(X_{-i})+L_iX_i+\delta_{\widetilde{\Gamma}_+^n}(X_i).
\end{equation*}
The subproblem w.r.t. $X_i$ exhibits the following form:
\begin{align}\label{barX-chap4}
{X}_{i}^{k+1}=\underset{X_i}{\operatorname*{argmin}}~&\Omega(X_0^k,X_1^k,\dots,X_{i-1}^k,X_i,{X}_{i+1}^{k+1},\dots,{X}_M^{k+1})+\frac{1}{2}\Vert X_i-X_i^k\Vert_{\mathcal{S}_i}^2,
\end{align}
where $\mathcal{S}_i$ is a positive linear operator given by
$\mathcal{S}_i(X)=\rho_i\mathcal{I}(X)-L_iX$ with $\rho_i=\max\{{\rm eig}(L_i)\}$
and an identity operator $\mathcal{I}$. The Fermat optimality condition implies 
\begin{equation*}
  \begin{aligned}
  0&\in-H(X^{k}_{-i})+L_iX_i+\delta_{\widetilde{\Gamma}_+^n}(X_i)+\mathcal{S}_i(X_i-X_i^k)\\
  &=\rho_i X_i+\delta_{\widetilde{\Gamma}_+^n}(X_i)-H(X^k_{-i})-\rho_i X_i^k+L_i X_i^k
  \end{aligned}
\end{equation*}
with $X^{k}_{-i}=(X_0^{k},\dots,X_{i-1}^k,X_{i+1}^{k+1},\dots,X_{M}^{k+1})$. Denoting $\Delta_i^k=\rho_i^{-1}(H_i(X_{-i}^k)+\rho_iX_i^k-L_iX_i^k)$, we have
\begin{equation}\label{u1-chap3}
  {X}_{i}^{k+1}=(I+\rho_i^{-1}\partial\delta_{\widetilde{\Gamma}_+^n})^{-1}(\Delta_i^k)=\Pi_{\widetilde{\Gamma}_+^n}(\Delta_i^k).
\end{equation}

For $i=0$, we have
\begin{equation*}
  \partial_{X_0}\Omega=-H_0(X_{-0})+L_0X_0+\delta_{\widetilde{\Gamma}_+^p}(X_0),
\end{equation*}
where
\begin{equation*}
  \begin{aligned}
  &\mathcal{L}^\star(X_{-0})=\mathcal{L}^\star(X_0,\dots,X_M)+X_0,\\
  &L_0=\frac{1}{\beta}(D_1^\top D_1)^{-1},\\
  &H_0(X_{-0})=-(D_1^\top D_1)^{-1}D_1^\top\xi_n+\frac{1}{\beta}(D_1^\top D_1)^{-1}\mathcal{L}^\star(X_{-0}).
  \end{aligned}
\end{equation*}
Similarly, the solution $X_{0}^{k+1}$ of subproblem w.r.t. $X_0$ is given by
\begin{equation*}
\underset{X_0}{\operatorname*{argmin}}~\Omega(X_0,{X}_1^{k+1},\dots,{X}_M^{k+1})+\frac{1}{2}\Vert X_0-X_0^k\Vert_{\mathcal{S}_0}^2,
\end{equation*}
where the positive linear operator $\mathcal{S}_0$ is given by
$\mathcal{S}_0(X)=\rho_0\mathcal{I}(X)-L_0X$ with $\rho_0=\max\{{\rm eig}(L_0)\}$.
Hence, by the same token, we have
\begin{equation}\label{u2-chap3}
X_{0}^{k+1}=(I+\rho_0^{-1}\partial\delta_{\widetilde{\Gamma}_+^p})^{-1}(\Delta_0^k)=\Pi_{\widetilde{\Gamma}_+^p}(\Delta_0^k)
\end{equation}
with $\Delta_0^k=\rho_0^{-1}(H_0(X^{k}_{-0})+\rho_0X_0^k-L_0X_0^k)$ and $X^{k}_{-0}=(X_1^{k+1},\dots,X_M^{k+1})$. Together with \eqref{u1-chap3}, \eqref{u2-chap3} and the fact that projection $\Pi_{\mathbb{S}_+^n}(\cdot)$ can be explicitly expressed by the eigenvalue decomposition (see Section 8.1.1 of \cite{boyd2004convex}), we finish the proof here.
\end{proof}

\begin{proposition}\label{thm_innerconverge}
Let $\{X^k\}_{k\geq 0}=\{(X_0^k,\dots,X_M^k)\}_{k\geq 0}$ be the sequence generated by \eqref{u1-chap3}, \eqref{u2-chap3} for solving optimization problem \eqref{dual2}. Then, it holds
\begin{equation*}
  {\Theta}(X^k)-{\Theta}^*\leq\mathcal{O}\left(\frac{1}{k}\right),~\forall k\geq 1.
\end{equation*}
\end{proposition}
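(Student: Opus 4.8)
The plan is to recognize that the objective in \eqref{dual2} has the composite form $\Omega=q+\sum_{i=0}^{M}\delta_{C_i}$, where $q$ is a convex quadratic with positive semidefinite (in general singular) Hessian and $C_0=\widetilde{\Gamma}_+^p$, $C_i=\widetilde{\Gamma}_+^n$ ($i\geq 1$) are closed convex cones, and that the updates \eqref{u1-chap3}--\eqref{u2-chap3} are precisely block proximal‑gradient steps $X_i^{k+1}=\Pi_{C_i}\bigl(X_i^{k}-\rho_i^{-1}\nabla_{X_i}q(Z^{k,i})\bigr)$ performed along a cyclic Gauss--Seidel sweep over the $M+1$ blocks, with step size $\rho_i^{-1}=1/\lambda_{\max}(L_i)$ equal to the reciprocal of the block‑Lipschitz constant of $\nabla_{X_i}q$; here $Z^{k,i}$ is the iterate just before block $i$ is refreshed. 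With this identification the task reduces to running the classical $\mathcal{O}(1/k)$ complexity analysis of the cyclic block‑coordinate proximal‑gradient method for convex composite minimization (the descent machinery of \cite{bolte2014proximal,bot2019proximal} adapts), the only nonstandard ingredient being the certification of boundedness of the iterates.

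First I would establish a sufficient‑decrease inequality. Since $\mathcal{S}_i=\rho_i\mathcal{I}-L_i\succeq 0$, the $i$‑th subproblem equals the minimization of the standard quadratic upper model of $q$ in block $i$ at $X_i^{k}$ plus $\delta_{C_i}$, so the descent lemma yields a per‑block decrease of at least $\tfrac{\rho_i}{2}\Vert X_i^{k+1}-X_i^{k}\Vert^2$; summing along the sweep gives $\Omega(X^{k+1})\leq\Omega(X^{k})-\tfrac{\underline{\rho}}{2}\Vert X^{k+1}-X^{k}\Vert^2$ with $\underline{\rho}:=\min_i\rho_i>0$. Hence $\{\Omega(X^k)\}$ is nonincreasing; by Assumption \ref{slater} strong duality holds, so $\Omega^\star:=\min\Omega$ is finite and attained, and $\sum_k\Vert X^{k+1}-X^{k}\Vert^2<\infty$.

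Next I would bound the suboptimality gap. Fix a minimizer $X^\star$ and set $a_k:=\Omega(X^k)-\Omega^\star\geq 0$. Writing the optimality inclusion of the $i$‑th block step as $-\nabla_{X_i}q(Z^{k,i})-\rho_i(X_i^{k+1}-X_i^{k})\in\partial\delta_{C_i}(X_i^{k+1})$, summing the subgradient inequalities of $\delta_{C_i}$ at $X^\star$, adding the convexity inequality $q(X^{k+1})-q(X^\star)\leq\langle\nabla q(X^{k+1}),X^{k+1}-X^\star\rangle$, and exploiting that $\nabla q$ is affine (so $\Vert\nabla_{X_i}q(Z^{k,i})-\nabla_{X_i}q(X^{k+1})\Vert\leq L\Vert X^{k+1}-X^{k}\Vert$ with $L=\Vert\nabla^2 q\Vert$), one reaches the standard cyclic‑BCD estimate $a_{k+1}\leq C\,\Vert X^{k+1}-X^{k}\Vert\cdot\Vert X^{k+1}-X^\star\Vert$ for a constant $C$ depending only on $L$, $M$, and $\max_i\rho_i$. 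Provided $R:=\sup_k\Vert X^{k}-X^\star\Vert<\infty$, combining this with the sufficient‑decrease inequality yields $a_k-a_{k+1}\geq\gamma\,a_{k+1}^2$ with $\gamma=\underline{\rho}/(2C^2R^2)$, and an elementary recursion for nonnegative nonincreasing sequences gives $a_k\leq\mathcal{O}(1/k)$. Since $\Omega$ and the dual objective $\Theta$ differ only by sign and an additive constant, this is the asserted bound.

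The main obstacle is exactly the uniform bound $R<\infty$: this is where, as flagged after \eqref{PAML}, the analysis of \cite{bot2019proximal} fails, since $\Omega$ is not coercive and its sublevel sets need not \textit{a priori} be bounded. I would circumvent coercivity via the problem structure. Under Assumption \ref{slater} (Slater's condition for \eqref{zn-1}), the set of optimal dual solutions --- which coincides with the set of minimizers of $\Omega$ --- is nonempty, convex, and compact. Because $\Omega$ is a convex quadratic plus indicators of closed convex cones, its recession function is $\Omega^\infty(V)=q^\infty(V)+\sum_i\delta_{C_i}(V)$, and a nonempty bounded minimizer set forces $\{V:\Omega^\infty(V)\leq 0\}=\{0\}$, which is equivalent to all sublevel sets of $\Omega$ being bounded. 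The sufficient‑decrease inequality confines the whole sequence to $\{X:\Omega(X)\leq\Omega(X^0)\}$, so $\sup_k\Vert X^k\Vert<\infty$ and $R<\infty$, which closes the argument. (Alternatively, one can quotient the ambient space by the lineality subspace $\ker\nabla^2 q\cap(\bigcap_i C_i)\cap(-\bigcap_i C_i)$, on which $\Omega$ is constant and the Gauss--Seidel sweep never moves, and carry out the same analysis on the quotient, where $\Omega$ becomes coercive.)
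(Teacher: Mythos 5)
Your proposal is correct, but it takes a genuinely different route from the paper. The paper's proof is a two-line reduction: it rewrites \eqref{dual2} in the composite form $\widehat{\Omega}=g+\sum_i h_i$ with the cone constraints, observes that the updates \eqref{u1-chap3}--\eqref{u2-chap3} are a BSUM scheme whose surrogates $u_i(X_i,X^k)=g(X_0^k,\dots,X_i,\dots,X_M^k)+\tfrac12\Vert X_i-X_i^k\Vert_{\mathcal{S}_i}^2$ satisfy the upper-bound/tightness/gradient-consistency conditions (``Assumption B'') of the cited BSUM reference, and then invokes that reference's Theorem~1 to obtain the $\mathcal{O}(1/k)$ rate. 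You instead reprove the rate from first principles: per-block sufficient decrease from the $\rho_i$-strong convexity of the regularized subproblem, the cyclic-BCD cost-to-go estimate $a_{k+1}\leq C\Vert X^{k+1}-X^k\Vert\,\Vert X^{k+1}-X^\star\Vert$ via the prox optimality inclusions and the affine structure of $\nabla q$, and the recursion $a_k-a_{k+1}\geq\gamma a_{k+1}^2$. What your approach buys is self-containedness and, more substantively, an explicit treatment of iterate boundedness: the BSUM complexity theorem the paper cites assumes compact level sets, which is not automatic here since $\Omega$ is a noncoercive quadratic plus cone indicators, and the paper does not verify it. Your argument --- compactness of the dual optimal set under Assumption \ref{slater} (the primal objective of \eqref{zn-1} is strongly convex in $\mathrm{svec}(W)$ since $D_1^\top D_1\succ 0$, so the primal value is finite and attained), hence trivial recession cone, hence bounded sublevel sets --- closes exactly the gap the paper leaves implicit. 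The only minor caveat is the degenerate case $L_i=0$ (so $\rho_i=0$), where the per-block strong-convexity constant vanishes; there the block update is an exact unregularized minimization and the sweep-level decrease argument still goes through, but it is worth flagging.
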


\begin{proof}
Denote
\begin{equation*}
\begin{aligned}
&g(X_0,\dots,X_M)=\frac{1}{2\beta}\Vert (D_1^\top D_1)^{-\frac{1}{2}}\mathcal{L}^\star(X_0,X_1,\dots,X_M)\Vert^2,\\
&h_0(X_0)=\langle X_0,(D_1^\top D_1)^{-1}D_1^\top\xi_n\rangle,\\
&h_i(X_i)=-\langle X_i,T_2(V_2^\top\otimes V_2){\rm vec}(Q)+T_2[(V_2\otimes V_2F_i)+(V_2F_i\otimes V_2)]D_1(D_1^\top D_1)^{-1}D_1^\top\xi_n\rangle,
\end{aligned}
\end{equation*}
for $i=1,\dots,M$. Then, optimization problem \eqref{dual2} is equivalent to the following standard form:
\begin{equation*}
\begin{aligned}
\min\limits_{X_0,\dots,X_M} &\widehat{\Omega}(X_0,\dots,X_M)\doteq g(X_0,\dots,X_M)+\sum_{i=0}^{M}h_i(X_i)\\
{\rm s.t.} ~~~&X_0\times X_1\dots\times X_M\in\widetilde{\Gamma}_+^p\times\widetilde{\Gamma}_+^n\times\dots\times\widetilde{\Gamma}_+^n.
\end{aligned}
\end{equation*}
For $i=0,\dots,M$, let
\begin{equation*}
u_i(X_i,X^k)=g(X_0^k,\dots,X_i,\dots,X_M^k)+\frac{1}{2}\Vert X_i-X_i^k\Vert_{\mathcal{S}_i}^2
\end{equation*}
with $X^k=(X_0^k,\dots,X_M^k)$. Then, by the update rule \eqref{u1-chap3}, \eqref{u2-chap3}, pBCD can be regarded as a special case of BSUM, while $\{u_i\}_{i=0}^M$ is a sequence of approximation functions of $\widehat{\Omega}$; see Assumption B of \cite{b6}. Hence, this proposition is a direct corollary of Theorem 1 of \cite{b6}.
\end{proof}

For $i=1,\dots, M$, the relative residual error can be defined as
\begin{equation*}
\begin{aligned}
{\rm err}_{X_0}^k&=\frac{\Vert X_0^k-\Pi_{\widetilde{\Gamma}_+^p}(X_0^k-L_0X_0^k+H_0)\Vert}{1+\Vert X_0^k\Vert+\Vert L_0X_0^k-H_0 \Vert},\\
{\rm err}_{X_i}^k&=\frac{\Vert X_i^k-\Pi_{\widetilde{\Gamma}_+^n}(X_i^k-L_iX_i^k+H_i)\Vert}{1+\Vert X_i^k\Vert+\Vert L_iX_i^k-H_i \Vert}.
\end{aligned}
\end{equation*}
Let
\begin{equation}\label{u5}
{\rm err}^k=\max\{{\rm err}_{X_0}^k,{\rm err}_{X_1}^k,\dots,{\rm err}_{X_M}^k\},
\end{equation}
and the inner layer optimization process will terminate if ${\rm err}^k<\epsilon$.
By \eqref{sol-svecW},
\begin{equation}\label{u6-chap3}
  \mathrm{svec}(W)=\beta^{-1}(D_1^\top D_1)^{-1}(\beta D_1^\top \xi_n-\mathcal{L}^\star(X_0^k,X_1^k,\dots,X_M^k))
\end{equation}
is the optimal solution of primal problem \eqref{zn-1}. Based on the aforementioned discussion in this subsection, the solver of subproblem w.r.t. $z_k$  is summarized as Algorithm \ref{pBCD-code}.

\begin{breakablealgorithm}
    \caption{pBCD}%算法标题
    \begin{algorithmic}%一行一个标行号
    \State \textbf{Given} : Initial point $(X_0^0,X_1^0,\dots,X_M^0)$, stopping criterion parameter $\epsilon$
    \State \textbf{Result} : ${\rm vec}(W)$
    \For{$k=0,1,2,...$}

        \State Update ${X}_M^{k+1},\dots,{X}_1^{k+1},X_0^{k+1}$ by \eqref{u1-chap3} and \eqref{u2-chap3}
        \State Determine ${\rm err}^{k+1}$ by (\ref{u5})
        \If{${\rm err}^{k+1}<\epsilon$}
           \State Compute ${\rm svec}(W)$ by (\ref{u6-chap3})\\
            ~~~~~~~~~~\Return ${\rm vec}(W)=D_1{\rm svec}(W)$
        \EndIf
    \EndFor
    \end{algorithmic}
    \label{pBCD-code}
\end{breakablealgorithm}

\begin{remark}\label{explain_of_solver}
 In this subsection, we propose a single-timescale first-order optimization algorithm to solve subproblem \eqref{zn-1}. However, various algorithms exist that can also address this subproblem. Compared to Algorithm \ref{pBCD-code}, these existing methods have the following limitations.

 \begin{itemize}
 	\item We first consider interior-point methods (IPMs), which exhibit three significant drawbacks. (1) Simone \textit{et al.} (2022) \cite{de2022sparse} point out every IPMs iteration should solve the associated linear systems with worse condition number, rendering numerical instability phenomenon; (2)  Lu and Yang (2023) \cite{lu2023practical} claim that IPMs are hard to take advantage of parallel computing massively due to the sequential nature of factorization; (3) Lu and Yang also highlight that IPMs may often raise an out-of-memory error and cannot take advantage of modern computing architectures, such as GPUs and distributed computing.
 	\item We then consider classic BCD-type methods, two-timescale algorithms that suffer from high computational complexity. Specifically, every classic BCD iteration necessitates solving the following problem: 
 	 \begin{equation*}
 	\min_{X_i}~\Omega(X_0^{k+1},X_1^{k+1},\dots,X_{i-1}^{k+1},X_i,X_{i+1}^{k},\dots,X_M^{k}),~~i=0,1,\dots, M,
 \end{equation*}
 which has no closed-form solution. Hence, we need to utilize a certain iterative algorithm to solve the above problem, making classic BCD a two-timescale method. 
 \end{itemize}
In summary, this subsection only demonstrates the feasibility of solving subproblem \eqref{zn-1} using a single-timescale first-order optimization algorithm. However, when the problem is small or sufficient computational resources are available, IPMs or classical BCD methods can be employed. 
\end{remark}

\subsection{Solver of subproblem w.r.t. $\widetilde{P}_n$}\label{subsubpn}

In this subsection, we will design the solver for subproblem w.r.t. $\widetilde{P}_n$:
\begin{equation}\label{subproblem-p}
\widetilde{P}_{n+1}\in \mathbf{prox}_{\mu^{-1}g}(\widetilde{P}_n-\mu^{-1}\rho\mathcal{B}^\top(\mathcal{A}\widetilde{W}_n+\mathcal{B}\widetilde{P}_n)).
\end{equation}
First, we introduce some important notations.
The matrix $U_\mathbb{I}$ denotes the submatrix of the $n$-dimensional identity matrix $I_n$ constructed from the columns corresponding to the index set $\mathbb{I}$, and in this notation, $x_\mathbb{I}=U_\mathbb{I}^\top x$.
The set $S_j(v)$ comprises all the index sets of size $j$ containing the $j$-largest elements in vector $v$.
Let's first define a way to index blocks of matrix $P$. For a matrix $P\in\mathbb{R}^{m \times n}$ divided into $s \times t$ blocks ($P=\mathbf{block}_{s,t}(P_{11},P_{12},\dots,P_{st})$), we define the index set $\mathbb{I}_{ij}$ for each block $(i,j)$ as follows:
\begin{equation*}
  \text{For each block }(i,j) \text{ with } i \in [s], j \in [t], \text{ we define an index set }\mathbb{I}_{ij}.
\end{equation*}
These index sets have the following properties:
\begin{itemize}
\item Each $\mathbb{I}_{ij}$ is a subset of $[mn]$ that indexes the entries in block $(i,j)$.
\item $[\mathrm{vec}(P)]_{\mathbb{I}_{ij}}\doteq\widetilde{P}_{\mathbb{I}_{ij}}$ gives us the vectorized form of block $P_{ij}$.
\item The sets $\mathbb{I}_{ij}$ partition $[mn]$ (they are disjoint and their union is $[mn]$).
\end{itemize}
To work with these blocks sequentially, we convert the double index $(i,j)$ to a single index $\ell$ by sorting lexicographically:
\begin{equation*}
  \mathbb{I}^*_{\ell} = \mathbb{I}_{ij} \text{ for } \ell = i+s(j-
  1) \in [st].
\end{equation*}
Finally, we define an indicator function $\pi: \mathbb{R}^{mn} \to \mathbb{R}^{st}$ that tells which blocks are non-zero:
\begin{equation*}
  \pi(\widetilde{P})_{\ell}=\left\{
  \begin{aligned}
  &1,~~\text{if block }\ell\text{ is non-zero } (\widetilde{P}_{\mathbb{I}^*_{\ell}}\neq \mathbf{0}),\\
  &0,~~\text{if block }\ell\text{ is zero } (\widetilde{P}_{\mathbb{I}^*_{\ell}}= \mathbf{0}).
    \end{aligned}
  \right.
\end{equation*}

Using these definitions, we can now rewrite optimization problem \eqref{subproblem-p} as
\begin{equation}\label{subproblem2-p-chap3}
  \min_{\widetilde{P}}~\frac{\gamma}{\mu}\Vert \pi(\widetilde{P})\Vert_0+\frac{1}{2}\Vert\widetilde{P}-\varpi_n\Vert^2
\end{equation}
with $\varpi_n=\widetilde{P} _n-\mu^{-1}\rho\mathcal{B}^\top(\mathcal{A}\widetilde{W}_n+\mathcal{B}\widetilde{P}_n)$.
Let the constraint set $C_S=\{\widetilde{P}\in\mathbb{R}^{mn}\colon \Vert \pi(\widetilde{P})\Vert_0\leq S\}$, and in this subsection, we consider a more general problem:
\begin{equation}\label{subproblem3-p-chap3}
  \min_{\widetilde{P}\in C_S}~\frac{\gamma}{\mu}\Vert \pi(\widetilde{P})\Vert_0+\frac{1}{2}\Vert\widetilde{P}-\varpi_n\Vert^2
\end{equation}
with predefined $S$. Obviously, problem \eqref{subproblem3-p-chap3} will degenerate to problem \eqref{subproblem-p} by setting $S=st$. Let operator $\mathcal{E}:2^{[st]}\to2^{[st]}$ be $\mathcal{E}(T)=\bigcup_{\ell\in T} \mathbb{I}^*_\ell$ and function $w:\mathbb{R}^{mn}\to\mathbb{R}^{st}$ be $w(\varpi_n)_\ell=\Vert ({\varpi_n})_{\mathbb{I}^*_\ell}\Vert^2$.

In practical algorithm implementations \eqref{PAML}, we typically only need to extract a single arbitrary element from the solution set of problem \eqref{subproblem3-p-chap3}, rather than being concerned with its complete structural characterization.
According to Theorem 3.2 of \cite{beck2019optimization}, the following algorithm can be used to obtain such a solution:

\begin{breakablealgorithm}
    \caption{Group-$\ell_0$-norm Proximal Operator Solver}%算法标题
    \begin{algorithmic}[1]%一行一个标行号
    \State \textbf{Input} : sparse constraint $S$
    \State \textbf{Output} : $\widetilde{P}_{n+1}$
    \State Compute $\varpi_n=\widetilde{P}_n-\mu^ {-1}\rho\mathcal{B}^\top(\mathcal{A}\widetilde{W}_n+\mathcal{B}\widetilde{P}_n)$
    \State Arbitrarily select $\mathbb{I}^*\in S_{S}(w(\varpi_n))$
    \State Set $R=\left\{\ell\in\mathbb{I}^*\colon w(\varpi_n)_\ell>\frac{2\gamma}{\mu}\right\}$
    \State \Return $\widetilde{P}_{n+1}=U_{\mathcal{E}(R)}(\varpi_n)_{\mathcal{E}(R)}$
    \end{algorithmic}
    \label{prox-solver}
\end{breakablealgorithm}

\begin{remark}\label{constraint-ell0}
When solving optimization problem \eqref{Pen_1} using algorithm \eqref{PAML}, no sparsity constraint set $C_{S}$ is introduced. Thus, when applying algorithm \ref{prox-solver} to solve subproblem \eqref{subproblem-p} for $\widetilde{P}_{n+1}$, we simply set ${S} = st$. Notably, algorithm \eqref{PAML} can be extended to solve the following optimization problem:
\begin{equation}\label{Pen_2}
  \min_{\widetilde{W},\widetilde{P}}~~ F(\widetilde{W},\widetilde{P})= f(\widetilde{W})+\gamma \Vert \pi(\widetilde{P})\Vert_0+\delta_{C_{S}}(\widetilde{P})+H(\widetilde{W},\widetilde{P}).
\end{equation}
This extension requires only a minor modification to the iterative scheme \eqref{PAML}, namely defining
$$g(\widetilde{P})\doteq \gamma \Vert \pi(\widetilde{P})\Vert_0+\delta_{C_{S}}(\widetilde{P}).$$
Since $C_{S}$ is a closed set, $g$ maintains the property of being a closed function. Optimization problem \eqref{Pen_2} has particular relevance in distributed control systems, where it naturally captures the practical constraint that a multi-agent system's communication topology should be limited to at most $S$ communication links. Such constraints frequently arise in industrial applications due to hardware costs, bandwidth limitations, and other practical considerations.
\end{remark}

\section{Convergence Analysis}

In this section, we provide a  convergence analysis for algorithm \eqref{PAML} when applied to optimization problem \eqref{Pen_1}. While convergence analysis exists for similar algorithms, see Bo\c{t} \textit{et al.} (2019) \cite{bot2019proximal} that study problems of the form:
\begin{equation}\label{CA1}
  \min_{x,y}~~A(x)+B(y)+C(x,y).
\end{equation}
However, the results of \cite{bot2019proximal} rely on some critical assumptions that do not hold in our settings. Specifically, Theorem 2.10 of \cite{bot2019proximal} establishes convergence to a Karush-Kuhn-Tucker (KKT) point only when either:
\begin{itemize}
  \item function $C(x,y)$ is coercive (a function $f$ is coercive if
$\|x\| \to \infty \Rightarrow f(x) \to +\infty$); or
  \item functions $A(x)$ and $B(y)$ are coercive.
\end{itemize}
A key distinguishing feature of our problem \eqref{Pen_1} is that neither the function $g$ nor $H$ satisfies the coercivity property required by \cite{bot2019proximal}. This fundamental difference means that we cannot apply existing convergence results directly. 

Obviously, we have the following fact:
\begin{equation*}
  \begin{aligned}
  &\nabla_{\widetilde{W}} H(\widetilde{W},\widetilde{P})=\rho \mathcal{A}^\top (\mathcal{A}\widetilde{W}+\mathcal{B}\widetilde{P}),\\
  &\nabla_{\widetilde{P}} H(\widetilde{W},\widetilde{P})=\rho \mathcal{B}^\top (\mathcal{A}\widetilde{W}+\mathcal{B}\widetilde{P}).
  \end{aligned}
\end{equation*}
Hence, for any fixed $\widetilde{P}$, the mapping $\nabla_{\widetilde{W}} H(\cdot,\widetilde{P})$ is $\kappa_1 \doteq \rho\Vert\mathcal{A}^\top \mathcal{A}\Vert$-Lipschitz continuous; for any fixed $\widetilde{W}$, the mapping $\nabla_{\widetilde{P}} H(\widetilde{W},\cdot)$ is $\kappa_2 \doteq \rho\Vert\mathcal{B}^\top \mathcal{B}\Vert$-Lipschitz continuous, while $\nabla_{\widetilde{W}} H(\widetilde{W},\cdot)$ is $\kappa_3 \doteq \rho\Vert\mathcal{A}^\top \mathcal{B}\Vert$-Lipschitz continuous.
Denote
\begin{equation*}
  \begin{aligned}
  &\mathbf{R}\doteq \mathbb{R}^{p^2}\times \mathbb{R}^{mn}\times \mathbb{R}^{p^2}\times \mathbb{R}^{p^2}\times\mathbb{R}^{p^2}\times \mathbb{R}^{p^2},\\
  &\mathbf{X}\doteq (\widetilde{W},\widetilde{P},z,u,\widetilde{W}^\prime,u^\prime),\\
  &\mathbf{X}_n\doteq (\widetilde{W}_n,\widetilde{P}_n,z_n,u_n,\widetilde{W}_{n-1},u_{n-1})~~\forall n\geq1,
    \end{aligned}
\end{equation*}
and define regularized augmented Lagrangian $\Psi:\mathbf{R}\to \mathbb{R}\cup \{+\infty\}$:
\begin{equation}\label{Psi-Lagrangian}
  \begin{aligned}
  \Psi(\mathbf{X})=&f(z)+g(\widetilde{P})+H(\widetilde{W},\widetilde{P})+\langle u,\widetilde{W}-z\rangle+\frac{\beta}{2}\Vert \widetilde{W}-z\Vert^2\\
  &+C_0\Vert(u-u^\prime)+\sigma(\tau-\beta)(\widetilde{W}-\widetilde{W}^\prime)\Vert^2+C_1\Vert\widetilde{W}-\widetilde{W}^\prime\Vert^2
  \end{aligned}
\end{equation}
with
\begin{equation*}
  C_0=\frac{4(1-\sigma)}{\sigma^2\beta},~~C_1=\frac{8(\sigma\tau+\kappa_1)^2}{\sigma\beta}.
\end{equation*}
The definitions of Kurdyka-{\L}ojasieicz (KL) property and semi-algebraic property are standard; see Definition 3 and Definition 5 of \cite{bolte2014proximal}, respectively.

\begin{remark}
In this remark, we aim to clarify the distinctions between this paper's convergence analysis methodologies and those in the existing literature.  For this purpose, the convergence proof is organized into the following steps, with each step accompanied by a discussion on its relation to and deviation from previous works.
\begin{itemize}
\item \textbf{Step 1: Sufficient Decrease Property of Lagrangian.} In other words, we aim to show
	\begin{equation*}
		\Psi_n -\Psi_{n+1}\geq C\Vert(\widetilde{W}_{n+1},\widetilde{P}_{n+1},z_{n+1},u_{n+1})-(\widetilde{W}_n,\widetilde{P}_n,z_n,u_n)\Vert^2
	\end{equation*}
	with constant $C$. Such decrease property will be established in Lemma \ref{desecent_lemma} below, while its proof follows exactly the same line of Lemma 2.5 of \cite{bot2019proximal}.
	
\item \textbf{Step 2: Boundness of Iterative Sequence $\{(\widetilde{W}_n,\widetilde{P}_n,z_n,u_n)\}_{n\geq 0}$.} In general, the boundness of the PALM iterative sequence $\{(\widetilde{W}_n,\widetilde{P}_n,z_n,u_n)\}_{n\geq 0}$ is hard to verify. Bolte \textit{et al.} simply assume the boundedness condition holds without further verification (see Lemma 5 of \cite{bolte2014proximal}); Bo\c{t} \textit{et al.} (2019)  ensure the boundedness through imposing coercivity condition as mentioned above (see Lemma 2.5 of \cite{bot2019proximal}); while Li and Pong (2015) prove the boundness through the analogous coercivity condition (see Theorem 3 of \cite{li2015global}). In contrast, Theorem \ref{boundness-thm} below will establish the boundedness without invoking any coercivity condition, depending on the special structure of the LQ problem.
\item \textbf{Step 3: Proof of KL Property.} The existing works assume the objective function exhibits the KL property; see, for example, Section 3.5 of \cite{bolte2014proximal}. However, we provide a rigorous proof of the KL property of $\Psi$; see Theorem \ref{semialgethm} below.
\item \textbf{Step 4: Proof of Convergence to Stationary Point.} In fact, provided sufficient decrease property, boundness of iterative sequence, and KL property, the convergence proof (including but not limited to PALM, ADMM, and their variants)  has become a standard and routine argument, which can be found in many existing works or even textbooks on nonconvex optimization. We claim the convergence of the sequence in Theorem \ref{convergence-stationary} by citing Theorem 3.4 of \cite{bot2019proximal}, while Theorem 1 of \cite{bolte2014proximal}, Theorem 4.4 of \cite{themelis2020douglas} and Theorem 4 of \cite{li2015global} all do the same thing. 
\item \textbf{Step 5: Calculating {\L}ojasieicz Exponent and Convergence Rate Analysis.} Our analysis on this part is relatively brief, while Remark \ref{calculateKL} provides insights for calculating {\L}ojasieicz exponent. Moreover, given {\L}ojasieicz exponent and convergence guarantee, the convergence rate analysis is again a standard and routine arguments. By directly quoting Theorem 10 of \cite{bot2019proximal}, we establish Theorem \ref{convergencerateofpaml} to illustrate the convergence rate of PALM.

\end{itemize}

\end{remark}

\begin{lemma}\label{desecent_lemma}
Let $\Psi_n = \Psi(\mathbf{X}_n)$ and suppose $2\tau \geq \beta$. Then, for any $n \geq 1$, the iterative sequence $\{(\widetilde{W}_n,\widetilde{P}_n,z_n,u_n)\}_{n\geq 0}$ generated by algorithm \eqref{PAML} satisfies:
\begin{equation}\label{descent-chap3}
  \Psi_{n+1}+C_2\Vert\widetilde{W}_{n+1}-\widetilde{W}_n\Vert^2+C_3\Vert\widetilde{P}_{n+1}-\widetilde{P}_n\Vert^2+C_4\Vert u_{n+1}-u_n\Vert^2\leq \Psi_n,
\end{equation}
where
\begin{equation*}
  \begin{aligned}
  &C_2=\tau-\frac{\kappa_1+\beta}{2}-\frac{4\sigma\tau^2}{\beta}-\frac{8(\sigma\tau+\kappa_1)^2}{\sigma\beta},\\
  &C_3=\frac{\mu-\kappa_2}{2}-\frac{8\kappa_3^2}{\sigma\beta},\\
  &C_4=\frac{1}{\sigma\beta}.
  \end{aligned}
\end{equation*}
\end{lemma}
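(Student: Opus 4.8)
\textbf{Proof proposal for Lemma \ref{desecent_lemma}.}

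The plan is to track the decrease of the objective $F(\widetilde{W},\widetilde{P}) = f(z) + g(\widetilde{P}) + H(\widetilde{W},\widetilde{P})$ augmented by the inner-product and quadratic-penalty terms, block by block, using the standard descent estimates furnished by the proximal steps and the descent lemma for the smooth coupling term $H$. I would first handle the $\widetilde{P}$-update: since $\widetilde{P}_{n+1}$ is a proximal point of $\mu^{-1}g$ at $\widetilde{P}_n - \mu^{-1}\nabla_{\widetilde{P}}H(\widetilde{W}_n,\widetilde{P}_n)$, the optimality of the prox together with the $\kappa_2$-Lipschitz continuity of $\nabla_{\widetilde{P}}H(\widetilde{W}_n,\cdot)$ yields (via the descent lemma) an inequality of the form $g(\widetilde{P}_{n+1}) + H(\widetilde{W}_n,\widetilde{P}_{n+1}) \le g(\widetilde{P}_n) + H(\widetilde{W}_n,\widetilde{P}_n) - \frac{\mu-\kappa_2}{2}\|\widetilde{P}_{n+1}-\widetilde{P}_n\|^2$. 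Next I would handle the $z$-update: $z_{n+1}$ minimizes $f(z) + \frac{\beta}{2}\|\widetilde{W}_n + \beta^{-1}u_n - z\|^2$, so comparing the value at $z_{n+1}$ with that at $z_n$ gives a decrease controlled by $\|z_{n+1}-z_n\|^2$, which I will later want to convert into a $\|\widetilde{W}_{n+1}-\widetilde{W}_n\|^2$-type quantity. Then the $\widetilde{W}$-update is an explicit gradient-type step; writing it as the stationarity condition of a quadratic and invoking again the descent lemma (now for $\nabla_{\widetilde{W}}H(\cdot,\widetilde{P}_{n+1})$ with modulus $\kappa_1$) produces a decrease term of order $(\tau - \frac{\kappa_1+\beta}{2})\|\widetilde{W}_{n+1}-\widetilde{W}_n\|^2$.

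The crux is then to control the dual-variable increments $u_{n+1}-u_n = \sigma\beta(\widetilde{W}_{n+1}-z_{n+1})$, which are not automatically dominated by the primal decreases. The standard device — which the remark explicitly says follows Lemma 2.5 of \cite{bot2019proximal} — is to bound $\|u_{n+1}-u_n\|$ using the stationarity condition of the $\widetilde{W}$-step: from $\nabla_{\widetilde{W}}H(\widetilde{W}_n,\widetilde{P}_{n+1}) + u_n + \beta(\widetilde{W}_n - z_{n+1}) + \tau(\widetilde{W}_{n+1}-\widetilde{W}_n) = 0$ one expresses $u_n + \beta(\widetilde{W}_n - z_{n+1})$, and combining with the analogous identity at the previous iteration gives $\|u_{n+1}-u_n\| \lesssim (\sigma\tau + \kappa_1)\|\widetilde{W}_{n+1}-\widetilde{W}_n\| + (\text{terms in } \widetilde{W}_n - \widetilde{W}_{n-1})$, plus a contribution involving $\nabla_{\widetilde{W}}H(\widetilde{W}_n,\widetilde{P}_{n+1}) - \nabla_{\widetilde{W}}H(\widetilde{W}_{n-1},\widetilde{P}_n)$, whose cross term in $\widetilde{P}$ is where the $\kappa_3$ constant (hence the $\frac{8\kappa_3^2}{\sigma\beta}$ correction in $C_3$) enters. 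Squaring and using Young's inequality repeatedly, these $u$-increment bounds get absorbed: the penalty coefficients $C_0 = \frac{4(1-\sigma)}{\sigma^2\beta}$ and $C_1 = \frac{8(\sigma\tau+\kappa_1)^2}{\sigma\beta}$ in the definition \eqref{Psi-Lagrangian} of $\Psi$ are precisely chosen so that the "bad" terms $\|u_n - u_{n-1}\|^2$ and $\|\widetilde{W}_n - \widetilde{W}_{n-1}\|^2$ reappearing from the estimates telescope against the $C_0,C_1$ terms carried in $\Psi_n$ versus $\Psi_{n+1}$.

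Assembling all the pieces — primal decreases from the three block updates, the inner-product/quadratic bookkeeping that turns $\frac{\beta}{2}\|\widetilde{W}-z\|^2 + \langle u, \widetilde{W}-z\rangle$ into a clean telescoping object, and the $u$-increment absorption using $C_0, C_1$ — yields \eqref{descent-chap3} with the stated $C_2, C_3, C_4$; the hypothesis $2\tau \ge \beta$ is used to keep a particular coefficient (arising when converting the $z$-step decrease, which is naturally in $\|z_{n+1}-z_n\|^2$, into usable form alongside $\|\widetilde{W}_{n+1}-\widetilde{W}_n\|^2$) nonnegative. The main obstacle, and the only genuinely delicate part, is the careful Young's-inequality splitting in the $u$-increment estimate: every split introduces a free parameter, and these must be chosen consistently so that the residual coefficients match exactly the $\frac{4\sigma\tau^2}{\beta}$, $\frac{8(\sigma\tau+\kappa_1)^2}{\sigma\beta}$, and $\frac{8\kappa_3^2}{\sigma\beta}$ appearing in $C_2$ and $C_3$. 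Since the remark states this step "follows exactly the same line of Lemma 2.5 of \cite{bot2019proximal}," I would mirror that argument's parameter choices rather than re-optimize them, and simply verify that the LQ-specific structure (the explicit forms of $\nabla H$ and the constants $\kappa_1,\kappa_2,\kappa_3$) does not break any step.
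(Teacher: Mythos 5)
Your proposal is correct and takes essentially the same route as the paper: the paper's entire proof is the one-line statement that the lemma is a direct corollary of Lemma 2.5 of Bo\c{t} \emph{et al.} (2019), and your sketch is a faithful reconstruction of that lemma's argument (block-wise prox/descent estimates, the stationarity-based bound on $u_{n+1}-u_n$, and absorption of the residual terms via the $C_0,C_1$ coefficients built into $\Psi$). No gap to report.
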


\begin{proof}
This lemma is a direct corollary of Lemma 2.5 of \cite{bot2019proximal}.
\end{proof}

\begin{lemma}\label{selectrule}
Let
\begin{equation}\label{chosepara}
  \begin{aligned}
  &0<\sigma<\frac{1}{24},\\
  &\beta>\frac{4\kappa_1}{1-24\sigma}\left(4+3\sigma+\sqrt{24-168\sigma+9\sigma^2}\right)>0,\\
  &\mu>\kappa_2+\frac{16\kappa_3^2}{\sigma\beta}>0,\\
  &\max\left\{\frac{\beta}{2},\frac{\beta}{24\sigma}\left(1-\frac{16\kappa_1}{\beta}-\sqrt{\varsigma}\right)\right\}<\tau<\frac{\beta}{24\sigma}\left(1-\frac{16\kappa_1}{\beta}+\sqrt{\varsigma}\right)
  \end{aligned}
\end{equation}
with
\begin{equation*}
  \varsigma=1-\frac{32\kappa_1}{\beta}-\frac{128\kappa_1^2}{\beta^2}-\frac{24\kappa_1\sigma}{\beta}-24\sigma>0.
\end{equation*}
Then, it holds $\min\{C_2,C_3,C_4\}>0$, and there exists $\gamma_1,\gamma_2\neq 0$ such that
\begin{equation}\label{gamma-chap3}
  \begin{aligned}
  &\frac{1}{\gamma_1}-\frac{\kappa_1}{2\gamma_1^2}=\frac{1}{\beta},~~\frac{1}{\gamma_2}-\frac{\kappa_1}{2\gamma_2^2}=\frac{2}{\beta}.
  \end{aligned}
\end{equation}
\end{lemma}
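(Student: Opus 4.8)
The plan is to verify the claim in two independent parts: first the positivity $\min\{C_2,C_3,C_4\}>0$, and then the solvability of the two scalar equations in \eqref{gamma-chap3}. Both parts are elementary algebra, but careful bookkeeping is needed to make the chain of inequalities in \eqref{chosepara} produce exactly the required sign conditions.

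For the positivity of $C_4=\frac{1}{\sigma\beta}$ there is nothing to prove since $\sigma,\beta>0$. For $C_3=\frac{\mu-\kappa_2}{2}-\frac{8\kappa_3^2}{\sigma\beta}$, the hypothesis $\mu>\kappa_2+\frac{16\kappa_3^2}{\sigma\beta}$ gives $\frac{\mu-\kappa_2}{2}>\frac{8\kappa_3^2}{\sigma\beta}$, hence $C_3>0$ immediately. The substantive case is $C_2=\tau-\frac{\kappa_1+\beta}{2}-\frac{4\sigma\tau^2}{\beta}-\frac{8(\sigma\tau+\kappa_1)^2}{\sigma\beta}$. I would expand $\frac{8(\sigma\tau+\kappa_1)^2}{\sigma\beta}=\frac{8\sigma\tau^2}{\beta}+\frac{16\kappa_1\tau}{\beta}+\frac{8\kappa_1^2}{\sigma\beta}$, collect the $\tau^2$ terms into $-\frac{12\sigma}{\beta}\tau^2$, and write $C_2$ as a downward-opening quadratic in $\tau$:
\begin{equation*}
C_2 = -\frac{12\sigma}{\beta}\tau^2 + \left(1-\frac{16\kappa_1}{\beta}\right)\tau - \left(\frac{\kappa_1+\beta}{2}+\frac{8\kappa_1^2}{\sigma\beta}\right).
\end{equation*}
This is positive precisely on the open interval between its two roots. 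Multiplying through by $-\frac{\beta}{12\sigma}$ and applying the quadratic formula, the roots are $\tau_{\pm}=\frac{\beta}{24\sigma}\bigl(1-\frac{16\kappa_1}{\beta}\pm\sqrt{\varsigma}\bigr)$, where $\varsigma$ is exactly the discriminant-type quantity defined in the statement — this is where I expect the main computational obstacle: one must check that the constant $\varsigma$ obtained from $\bigl(1-\frac{16\kappa_1}{\beta}\bigr)^2 - 4\cdot\frac{12\sigma}{\beta}\cdot\bigl(\frac{\kappa_1+\beta}{2}+\frac{8\kappa_1^2}{\sigma\beta}\bigr)$ simplifies to $1-\frac{32\kappa_1}{\beta}-\frac{128\kappa_1^2}{\beta^2}-\frac{24\kappa_1\sigma}{\beta}-24\sigma$ as claimed. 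Given that, the condition $\tau_-<\tau<\tau_+$ is exactly the upper and one of the lower bounds imposed on $\tau$ in \eqref{chosepara}, so $C_2>0$; one must also separately confirm $\varsigma>0$ and $\tau_+>\frac{\beta}{2}$ so that the prescribed interval for $\tau$ is nonempty — the bounds $0<\sigma<\frac{1}{24}$ and $\beta>\frac{4\kappa_1}{1-24\sigma}\bigl(4+3\sigma+\sqrt{24-168\sigma+9\sigma^2}\bigr)$ are designed precisely to guarantee $\varsigma>0$ and this nonemptiness, which I would verify by treating $\beta$ as the large parameter and checking the leading behavior.

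For the second claim, each equation in \eqref{gamma-chap3} has the form $\frac{1}{\gamma}-\frac{\kappa_1}{2\gamma^2}=c$ for $c=\frac{1}{\beta}$ or $c=\frac{2}{\beta}$. Multiplying by $\gamma^2$ yields the quadratic $c\gamma^2-\gamma+\frac{\kappa_1}{2}=0$, whose discriminant is $1-2c\kappa_1$. A real nonzero root exists iff $1-2c\kappa_1\ge 0$, i.e. $c\le\frac{1}{2\kappa_1}$; the larger value is $c=\frac{2}{\beta}$, so it suffices to check $\frac{2}{\beta}\le\frac{1}{2\kappa_1}$, i.e. $\beta\ge 4\kappa_1$. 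This follows from the lower bound on $\beta$ in \eqref{chosepara}, since $\frac{4\kappa_1}{1-24\sigma}\bigl(4+3\sigma+\sqrt{24-168\sigma+9\sigma^2}\bigr)>4\kappa_1$ whenever $0<\sigma<\frac{1}{24}$ (the prefactor $\frac{1}{1-24\sigma}$ exceeds $1$ and the bracket exceeds $4$). Hence both quadratics have real roots, and since $\frac{\kappa_1}{2}>0$ the roots are nonzero, giving the desired $\gamma_1,\gamma_2\neq 0$. I would close by noting that one may take the roots explicitly as $\gamma_i=\frac{1\pm\sqrt{1-2c_i\kappa_1}}{2c_i}$, which also records that these constants are finite and well-defined for later use.
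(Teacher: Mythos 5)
Your proof is correct, but it takes a genuinely different route from the paper: the paper disposes of this lemma in one line by declaring it a direct corollary of Lemma 2.6 of Bo\c{t} \emph{et al.}\ (2019), whereas you give a self-contained elementary verification. Your decomposition is exactly the right one — $C_4>0$ is trivial, $C_3>0$ falls out of the $\mu$ condition, and $C_2$ is a downward-opening quadratic in $\tau$ whose discriminant is precisely $\varsigma$; I checked the expansion $\bigl(1-\tfrac{16\kappa_1}{\beta}\bigr)^2-\tfrac{48\sigma}{\beta}\bigl(\tfrac{\kappa_1+\beta}{2}+\tfrac{8\kappa_1^2}{\sigma\beta}\bigr)=1-\tfrac{32\kappa_1}{\beta}-\tfrac{128\kappa_1^2}{\beta^2}-\tfrac{24\kappa_1\sigma}{\beta}-24\sigma$ and it does reproduce $\varsigma$ exactly, and the roots are the two endpoints $\tfrac{\beta}{24\sigma}\bigl(1-\tfrac{16\kappa_1}{\beta}\pm\sqrt{\varsigma}\bigr)$ appearing in \eqref{chosepara}. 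The $\gamma_1,\gamma_2$ argument via the quadratic $c\gamma^2-\gamma+\tfrac{\kappa_1}{2}=0$ with discriminant $1-2c\kappa_1$ is also fine, since the $\beta$ lower bound comfortably exceeds $4\kappa_1$. What your approach buys is transparency and independence from the external reference (whose constants need not match the present $C_2,C_3,C_4$ verbatim); what the paper's citation buys is brevity and consistency with the fact that Lemma \ref{desecent_lemma} is likewise imported from the same source.

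The one step you defer — that the lower bound on $\beta$ forces $\varsigma>0$ — does work out exactly rather than just asymptotically: $\varsigma>0$ is equivalent to $(1-24\sigma)\beta^2-(32+24\sigma)\kappa_1\beta-128\kappa_1^2>0$, and since $(32+24\sigma)^2+512(1-24\sigma)=64\,(24-168\sigma+9\sigma^2)$, the positive root of this quadratic in $\beta$ is precisely $\frac{4\kappa_1}{1-24\sigma}\bigl(4+3\sigma+\sqrt{24-168\sigma+9\sigma^2}\bigr)$, i.e.\ the stated threshold. You may also note that $\varsigma>0$ is written into the hypotheses of the lemma anyway, so strictly speaking nothing hinges on this check; it only confirms that the hypotheses are mutually consistent.
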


\begin{proof}
This lemma is a direct corollary of Lemma 2.6 of \cite{bot2019proximal}.
\end{proof}

\begin{theorem}\label{boundness-thm}
  Assume that parameters $\mu, \beta, \tau, \sigma$ satisfy \eqref{chosepara}; then the iterative sequence $\{(\widetilde{W}_n, \widetilde{P}_n, z_n, u_n)\}_{n \geq 0}$ generated by algorithm \eqref{PAML} is bounded.
\end{theorem}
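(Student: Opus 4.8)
The plan is to exploit the sufficient-decrease property (Lemma~\ref{desecent_lemma}) together with the very specific structure of $f$, which contains the indicator of the PSD cone $\Gamma_+^p$ and the linear term $\langle \mathrm{vec}(R),\widetilde W\rangle$ with $R\succ 0$ on the relevant block. First I would note that Lemma~\ref{selectrule} guarantees $\min\{C_2,C_3,C_4\}>0$, so iterating \eqref{descent-chap3} yields that $\{\Psi_n\}_{n\ge0}$ is nonincreasing and that $\sum_n(\|\widetilde W_{n+1}-\widetilde W_n\|^2+\|\widetilde P_{n+1}-\widetilde P_n\|^2+\|u_{n+1}-u_n\|^2)<\infty$; in particular these successive differences tend to $0$. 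The crux is therefore to show $\{\Psi_n\}$ is bounded \emph{below}, since then $\Psi_n\downarrow$ to a finite limit and one can read off boundedness of the individual iterate components from the structure of $\Psi$.

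Next I would unpack $\Psi_n$. Since $\Psi_n\le\Psi_1<\infty$ for all $n$, every term in \eqref{Psi-Lagrangian} that is an indicator must be finite, so $z_n\in\Gamma_+^p$ (and the $\widetilde\Psi_i(z_n)$ lie in $\Gamma_+^n$, but that is not what drives boundedness), and $\widetilde P_n\in\mathrm{dom}(g)$ automatically. The key inequality is then to bound $\Psi_n$ below by something coercive in $z_n$. Here I would use that $f(z)=\langle\mathrm{vec}(R),z\rangle+\delta_{\Gamma_+^p}(z)+\cdots$ and that $R=\mathrm{blockdiag}(C^\top C,\,D^\top D)\succ0$ under Assumption~\ref{ass1} (via Remark~\ref{ass_1_explain}); hence on $\Gamma_+^p$ we have $\langle\mathrm{vec}(R),z\rangle=\langle R,\mathrm{vec}^{-1}(z)\rangle\ge \lambda_{\min}(R)\,\mathrm{Tr}(\mathrm{vec}^{-1}(z))\ge c\|z\|_F$ for $z\succeq0$ with $c>0$ — a PSD matrix with large norm has large trace, and the inner product of two PSD matrices is bounded below by $\lambda_{\min}$ times the trace. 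This makes $f$ coercive \emph{on its domain}. The remaining terms $g(\widetilde P)\ge0$, $H\ge0$, $\tfrac\beta2\|\widetilde W-z\|^2\ge0$, $C_0\|\cdots\|^2\ge0$, $C_1\|\widetilde W-\widetilde W'\|^2\ge0$ are all nonnegative; the only possibly-negative term is $\langle u,\widetilde W-z\rangle$. I would control it using the optimality condition for the $z$-update, which gives $u_{n+1}+\text{(something)}\in$ a subgradient relation forcing $u_{n+1}$ to lie in the range of a bounded operator, or more directly by completing the square: $\langle u,\widetilde W-z\rangle+\tfrac\beta2\|\widetilde W-z\|^2\ge-\tfrac{1}{2\beta}\|u\|^2$, and then bounding $\|u_n\|$ separately.

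The main obstacle — and where the LQ structure is essential, as the authors flag in their Step~2 discussion — is bounding $\{u_n\}$, since the penalty term $H$ is not coercive and the dual-like variable $u$ has no a priori bound. I would handle this by revisiting the $z$-update: $z_{n+1}\in\mathbf{prox}_{\beta^{-1}f}(\widetilde W_n+\beta^{-1}u_n)$ means $\beta(\widetilde W_n+\beta^{-1}u_n-z_{n+1})\in\partial f(z_{n+1})$, i.e. $u_n+\beta(\widetilde W_n-z_{n+1})\in\partial f(z_{n+1})=\mathrm{vec}(R)+N_{\Gamma_+^p}(z_{n+1})+\cdots$. Combined with the $\widetilde W$-update $\widetilde W_{n+1}-\widetilde W_n=-\tau^{-1}(\nabla_{\widetilde W}H(\widetilde W_n,\widetilde P_{n+1})+u_n+\beta(\widetilde W_n-z_{n+1}))$, whose left side $\to0$, and with $\nabla_{\widetilde W}H$ bounded whenever $\mathcal A\widetilde W_n+\mathcal B\widetilde P_{n+1}$ is bounded, one gets that $u_n$ is asymptotically a bounded perturbation of an element of $\mathrm{vec}(R)+N_{\Gamma_+^p}(z_{n+1})+(\text{range of }T_2\text{-type operators})$. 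The decisive point is that once $z_n$ is shown bounded (from the coercivity of $f$ on its domain established above, provided $\Psi_n$ is bounded below), a bootstrap closes the loop: boundedness of $z_n$ and $\widetilde P_n$ gives boundedness of $\widetilde W_n$ from the coupled decrease estimate, which gives boundedness of $H(\widetilde W_n,\widetilde P_n)$ and hence of the residual $\mathcal A\widetilde W_n+\mathcal B\widetilde P_n$, which feeds back to bound $u_n$ via the $\widetilde W$-update relation and $u_{n+1}=u_n+\sigma\beta(\widetilde W_{n+1}-z_{n+1})$. I expect the cleanest route is: (i) show $\Psi_n$ bounded below by the argument $\Psi_n\ge f(z_n)-\tfrac1{2\beta}\|u_n\|^2\ge c\|z_n\|_F-\tfrac1{2\beta}\|u_n\|^2+(\text{nonneg.})$, then express $\|u_n\|$ through the $\widetilde W$-update in terms of $\|z_{n+1}-\widetilde W_n\|$, $\|\nabla H\|$ and $\|\widetilde W_{n+1}-\widetilde W_n\|$, absorbing the cross terms by Young's inequality so that the net lower bound is $-$const; (ii) conclude $\Psi_n\downarrow\Psi^\ast>-\infty$; (iii) read boundedness of each of $z_n$, $\widetilde P_n$, $\widetilde W_n$, $u_n$ off the now-bounded, coercive-in-the-right-variables expression for $\Psi_n$, using $\|\widetilde W_n-z_n\|$ bounded (from $\tfrac\beta2\|\widetilde W_n-z_n\|^2\le\Psi_n+\tfrac1{2\beta}\|u_n\|^2$) and the telescoped summability of differences to transfer bounds between consecutive indices.
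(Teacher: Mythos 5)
Your proposal follows essentially the same route as the paper's proof: completing the square to replace $\langle u,\widetilde W-z\rangle+\tfrac{\beta}{2}\Vert\widetilde W-z\Vert^2$ by $-\tfrac{1}{2\beta}\Vert u\Vert^2$ plus a nonnegative term; expressing $u_{n+1}$ through the $\widetilde W$- and $u$-updates as a combination of successive differences and $\rho\mathcal A^\top(\mathcal A\widetilde W_{n+1}+\mathcal B\widetilde P_{n+1})$; absorbing the squared gradient term back into $H$ (the paper does this via the constants $\gamma_1,\gamma_2$ of Lemma~\ref{selectrule}, which is exactly your ``Young's inequality'' step); invoking $\lambda_{\min}(R)>0$ to make $f$ coercive on $\Gamma_+^p$ and bound $\{z_n\}$; and finally bootstrapping to $\{\widetilde W_n\}$ via $\Vert\widetilde W_n-z_n\Vert$ and to $\{\widetilde P_n\}$ via the boundedness of $H(\widetilde W_n,\widetilde P_n)$. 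All the structural ingredients match.

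There is, however, one genuine circularity you need to repair. You claim at the outset that telescoping \eqref{descent-chap3} gives $\sum_n\bigl(\Vert\widetilde W_{n+1}-\widetilde W_n\Vert^2+\cdots\bigr)<\infty$, and your lower bound for $\Psi_n$ in step (i) then relies on the successive differences being uniformly bounded (they enter, squared and with a negative sign, into the estimate of $-\tfrac{1}{2\beta}\Vert u_{n+1}\Vert^2$). But the telescoped sum is finite only if $\Psi_n$ is bounded below --- which is precisely what the bound on the differences is subsequently used to prove. The paper breaks this circle by importing the uniform bound $B_*=\sup_n\{\Vert\widetilde W_{n+1}-\widetilde W_n\Vert,\dots\}<\infty$ from Theorem 2.8 of \cite{bot2019proximal} as an independent input, and only then derives the lower bound \eqref{lowerboundPsi}, which explicitly contains $B_*^2$. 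Without that external result, or some other argument for boundedness of the increments that does not presuppose $\inf_n\Psi_n>-\infty$, your step (i) does not close. A secondary, minor point: your bootstrap order (``boundedness of $z_n$ and $\widetilde P_n$ gives boundedness of $\widetilde W_n$'') is inverted relative to what actually works --- $\widetilde P_n$ is the last variable to be bounded, via $H(\widetilde W_n,\widetilde P_n)\geq\Vert\mathrm{vec}(W_{2,n})-\widetilde P_n\Vert^2$ once $\widetilde W_n$ is already known to be bounded; also the relevant quantity controlled by $\Psi_n$ is $\Vert\widetilde W_n-z_n+\tfrac1\beta u_n\Vert$, from which $\Vert\widetilde W_n-z_n\Vert$ and $\Vert u_n\Vert$ are separated using $u_{n+1}-u_n=\sigma\beta(\widetilde W_{n+1}-z_{n+1})$ together with $B_*<\infty$.
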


\begin{proof}
See Appendix for the proof.
\end{proof}

\begin{theorem}\label{semialgethm}
The regularized augmented Lagrangian $\Psi$ \eqref{Psi-Lagrangian} is semi-algebraic.
\end{theorem}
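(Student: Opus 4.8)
The plan is to verify that $\Psi$ is semi-algebraic by decomposing it into a finite sum of functions, each of which is semi-algebraic, and then invoking the standard closure properties of the class of semi-algebraic functions (it is closed under finite sums, finite products, composition with affine maps, and so on; see e.g. the discussion in Section 3.5 of \cite{bolte2014proximal}). Looking at the definition \eqref{Psi-Lagrangian}, $\Psi$ is built from the summands $f(z)$, $g(\widetilde{P})$, $H(\widetilde{W},\widetilde{P})$, $\langle u,\widetilde{W}-z\rangle$, $\tfrac{\beta}{2}\|\widetilde{W}-z\|^2$, $C_0\|(u-u')+\sigma(\tau-\beta)(\widetilde{W}-\widetilde{W}')\|^2$, and $C_1\|\widetilde{W}-\widetilde{W}'\|^2$. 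So it suffices to check each piece.

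First I would dispatch the easy pieces. The inner product $\langle u,\widetilde{W}-z\rangle$ is a polynomial, hence semi-algebraic; so are the three squared-norm terms, since each is a quadratic polynomial in the coordinates of $\mathbf{X}$ (the linear maps $(u,u',\widetilde{W},\widetilde{W}')\mapsto (u-u')+\sigma(\tau-\beta)(\widetilde{W}-\widetilde{W}')$ etc.\ are affine, and composing a polynomial with an affine map keeps it polynomial). The coupling term $H(\widetilde{W},\widetilde{P})=\tfrac{\rho}{2}\|\mathcal{A}\widetilde{W}+\mathcal{B}\widetilde{P}\|^2$ is likewise a quadratic polynomial in $(\widetilde{W},\widetilde{P})$ and therefore semi-algebraic. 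Next, $f(\widetilde{W})=\langle\mathrm{vec}(R),\widetilde{W}\rangle+\delta_{\Gamma_+^p}(\widetilde{W})+\sum_{i=1}^M\delta_{\Gamma_+^n}(\widetilde{\Psi}_i)$: the linear part is a polynomial, and each indicator function $\delta_{\Gamma_+^k}$ is the indicator of the PSD cone (pulled back along the affine map $\widetilde{W}\mapsto\widetilde{\Psi}_i$), which is a basic semi-algebraic set since $\{S\in\mathbb{S}^k:S\succeq 0\}$ is cut out by the finitely many polynomial inequalities ``all leading principal minors $\ge 0$'' (or all $2^k$ principal minors $\ge 0$); the indicator of a semi-algebraic set is a semi-algebraic function, and precomposition with the affine map $\widetilde{W}\mapsto\widetilde\Psi_i=T_2\,\mathrm{vec}(\Psi_i)$ preserves this.

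The one genuinely structural piece is $g(\widetilde{P})=\gamma\|\mathrm{vec}^{-1}(\widetilde{P})\|_{s,t;0}=\gamma\|\pi(\widetilde{P})\|_0$. Here I would argue that $g$ is a finite-valued step function whose level sets are semi-algebraic: for each of the finitely many subsets $T\subseteq[st]$, the set $\{\widetilde{P}:\pi(\widetilde{P})=\mathbf 1_T\}$ is defined by the conditions $\widetilde{P}_{\mathbb{I}^*_\ell}=\mathbf 0$ for $\ell\notin T$ (finitely many linear equalities) together with $\widetilde{P}_{\mathbb{I}^*_\ell}\neq\mathbf 0$ for $\ell\in T$ (each a finite union of strict polynomial inequalities, i.e.\ ``some coordinate $\neq 0$''), and on this set $g$ takes the constant value $\gamma|T|$. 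Thus $g$ is semi-algebraic because its graph is a finite union of semi-algebraic sets $\{\widetilde{P}:\pi(\widetilde{P})=\mathbf 1_T\}\times\{\gamma|T|\}$. Finally, since $\Psi$ is a finite sum of semi-algebraic functions (after composing each with the relevant coordinate projection from $\mathbf R$), it is itself semi-algebraic, which completes the argument. The main obstacle — and it is a mild one — is just making the bookkeeping around $g$ and the PSD-cone indicators precise; everything else is an immediate appeal to closure properties, so the proof will be short.
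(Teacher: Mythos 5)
Your proposal is correct and follows essentially the same route as the paper: decompose $\Psi$ into its summands, note that the linear and quadratic terms are polynomials, that the PSD-cone indicators (precomposed with affine maps) are indicators of semi-algebraic sets, and that the graph of $g$ is a finite union of products of $\{\mathbf{0}\}$'s and their complements times constant values $\gamma|T|$, then invoke closure under finite sums; the only cosmetic difference is that the paper handles the composed indicators $\delta_{\Gamma_+^n}(\mathcal{H}_i(z))$ via the Tarski--Seidenberg projection theorem rather than your (equally valid) affine-precomposition argument. One small caution: nonnegativity of the \emph{leading} principal minors does not characterize positive semidefiniteness (consider $\mathrm{diag}(0,-1)$), so you should retain only your parenthetical alternative that \emph{all} principal minors be nonnegative; with that fix the argument is complete.
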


\begin{proof}
First, we prove that $f(z)$ is a semi-algebraic function, where $f(z)$ takes the following form:
\begin{equation*}
  \begin{aligned}
  f(z)&=\overbrace{\langle {\rm vec}(R),z\rangle}^{\mathrm{\uppercase\expandafter{\romannumeral1}} }+\overbrace{\delta_{\Gamma_+^p}(z)}^{\mathrm{\uppercase\expandafter{\romannumeral2}}}+\overbrace{\delta_{\Gamma_+^n}(\mathcal{H}_1(z))+\cdots+\delta_{\Gamma_+^n}(\mathcal{H}_M(z))}^{\mathrm{\uppercase\expandafter{\romannumeral3}}}
  \end{aligned}
\end{equation*}
with $\{\mathcal{H}_i\}_{i=1}^M$ a sequence of linear operator.
The term \uppercase\expandafter{\romannumeral1} is a linear function that is clearly semi-algebraic, while the term \uppercase\expandafter{\romannumeral2} is the indicator function of positive semidefinite cone $\Gamma_+^p$. According to Example 2 of \cite{bolte2014proximal}, this term is semi-algebraic. We now prove that the term \uppercase\expandafter{\romannumeral3} is also semi-algebraic. Due to the symmetry, it suffices to show that $\delta_{\Gamma_+^n}(\mathcal{H}_1(z))$ is a semi-algebraic function. Denote
\begin{equation*}
\begin{aligned}
  &S_1=\{(z,p,t)\in\mathbb{R}^{p^2}\times \mathbb{R}^{n^2}\times \mathbb{R}\colon p=\mathcal{H}_1(z)\},\\
  &S_2=\{(z,p,t)\in\mathbb{R}^{p^2}\times \mathbb{R}^{n^2}\times \mathbb{R}\colon t=\delta_ {\Gamma_+^n}(p)\},
\end{aligned}
\end{equation*}
and similarly, we know that $S_1$ and $S_2$ are semi-algebraic sets, and thus their intersection $S_1 \cap S_2$ is also semi-algebraic. By the Tarski-Seidenberg theorem (see Theorem 1.9 of \cite{coste2000introduction}), we conclude that
\begin{equation*}
  S=\{(z,t)\in\mathbb{R}^{p^2}\times \mathbb{R}\colon t=\delta_ {\Gamma_+^n}(\mathcal{H}_1(z))\}
\end{equation*}
is a semi-algebraic set. This implies that the function $\delta_{\Gamma_+^n}(\mathcal{H}_1(z))$ is semi-algebraic, and hence $f$ is a semi-algebraic function.

We now prove that $g(\widetilde{P})$ is a semi-algebraic function. For any index set $\mathbb{I}^* \subseteq [st]$, define
\begin{equation*}
  J_l^{\mathbb{I}^*} =
  \left\{
  \begin{array}{ll}
    \{\mathbf{0}_{k_l}\}, & \text{if } l \in \mathbb{I}^*, \\
    \mathbb{R}^{k_l} \setminus \{\mathbf{0}_{k_l}\}, & \text{if } l \notin \mathbb{I}^*,
  \end{array}
  \right.
\end{equation*}
where $k_l = m_{(l \bmod s)} n_{\left(\lfloor \frac{l}{s} \rfloor + 1 \right)}$ with $m_0 \doteq m_s$.
From $g(\widetilde{P})=\gamma \Vert \pi(\widetilde{P})\Vert_0$, we have
\begin{equation*}
  \sigma_*(\mathbf{graph}(g))=\bigcup_{\mathbb{I}^*\subseteq [st]}\left(\prod_{l=1}^{st}J_l^{\mathbb{I}^*}\right)\times \{\gamma(st-\mathrm{card}(\mathbb{I}^*))\},
\end{equation*}
where $\sigma_*(\mathbf{graph}(g))$ denotes the image of the graph of $g$ under a certain coordinate permutation (however, we do not care about the specific form of the permutation $\sigma_*$), this transformation preserves the algebraic properties of the set. Therefore, $\sigma_*(\mathbf{graph}(g))$ is a semi-algebraic set, which implies that $\mathbf{graph}(g)$ is semi-algebraic. Consequently, $g$ is a semi-algebraic function.

The remaining terms in $\Psi$ are all quadratic and are semi-algebraic by Section 2.2.1 of \cite{coste2000introduction}. Since the class of semi-algebraic functions is closed under addition,  $\Psi$ is a semi-algebraic function. It completes the proof of the theorem. 
\end{proof}

\begin{corollary}\label{KL-psi}
The regularized augmented Lagrangian functional $\Psi$ \eqref{Psi-Lagrangian} is a KL function.
\end{corollary}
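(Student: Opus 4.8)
The goal is to prove Corollary~\ref{KL-psi}: that the regularized augmented Lagrangian $\Psi$ of \eqref{Psi-Lagrangian} is a KL function. The plan is to invoke the classical result (due to Bolte--Daniilidis--Lewis, and used throughout \cite{bolte2014proximal}) that every proper lower semicontinuous \emph{semi-algebraic} function satisfies the Kurdyka--{\L}ojasiewicz inequality at each point of its domain, hence is a KL function. Since Theorem~\ref{semialgethm} already establishes that $\Psi$ is semi-algebraic, the work reduces to checking the two remaining hypotheses of that meta-theorem, namely that $\Psi$ is proper and lower semicontinuous.

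First I would verify properness and lower semicontinuity. The function $f$ is a linear term plus a finite sum of indicator functions of closed convex cones ($\Gamma_+^p$ and the preimages of $\Gamma_+^n$ under the continuous linear maps $\mathcal{H}_i$); each such indicator is proper, convex and lsc, so $f$ is proper convex lsc. The function $g(\widetilde P)=\gamma\Vert\pi(\widetilde P)\Vert_0$ is nonnegative, finite everywhere, and lower semicontinuous because the block-$\ell_0$ count can only jump \emph{down} in the limit (a limit of vectors with a zero block still has that block zero), so $g$ is proper lsc. The coupling term $H$ and the two added quadratic correction terms $C_0\Vert\cdot\Vert^2$, $C_1\Vert\cdot\Vert^2$ are continuous and finite-valued. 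A finite sum of proper lsc functions, one of which ($f$) has nonempty domain on the relevant slice (by Assumption~\ref{slater}, or simply because $\Gamma_+^p$ and the cone constraints are jointly feasible), is again proper and lsc on $\mathbf{R}$. Hence $\Psi$ is proper and lsc.

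Then I would state the conclusion: combining properness, lower semicontinuity and semi-algebraicity (Theorem~\ref{semialgethm}) with the fact that semi-algebraic functions are KL functions (this is the standard result cited as, e.g., Section~4.3 / Theorem in \cite{bolte2014proximal}, originating from the {\L}ojasiewicz gradient inequality for semi-algebraic functions), we conclude that $\Psi$ satisfies the KL property at every point of $\mathrm{dom}(\partial\Psi)$ with a {\L}ojasiewicz-type desingularizing function, i.e., $\Psi$ is a KL function. This is essentially a one-line corollary once Theorem~\ref{semialgethm} is in hand.

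I do not anticipate a genuine obstacle here; the only point requiring a moment's care is the lower semicontinuity of $g$, since $\ell_0$-type penalties are not continuous — but as noted, the block-$\ell_0$ functional is lsc (its sublevel sets $C_S=\{\Vert\pi(\widetilde P)\Vert_0\le S\}$ are closed, a fact already used implicitly in Remark~\ref{constraint-ell0}), so this causes no trouble. Everything else is bookkeeping: closedness under finite sums of the class of proper lsc semi-algebraic functions, already invoked at the end of the proof of Theorem~\ref{semialgethm}.
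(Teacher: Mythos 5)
Your proposal is correct and follows essentially the same route as the paper: both deduce the KL property from the semi-algebraicity established in Theorem~\ref{semialgethm} together with the Bolte--Daniilidis--Lewis theorem, with your version merely spelling out the properness and lower semicontinuity checks (including the lsc of the block-$\ell_0$ term) that the paper asserts without detail.
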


\begin{proof}
By Theorem \ref{semialgethm}, $\Psi$ is a proper, closed, and semi-algebraic function, and it follows from the Bolte-Daniilidis-Lewis theorem \cite{bolte2007lojasiewicz} that $\Psi$ satisfies the KL property.
\end{proof}

Based on the above discussion, we summarize the iterative scheme \eqref{PAML} in the form of the following pseudocode.

\begin{breakablealgorithm}
    \caption{Proximal Alternating Linearized Minimization (PALM)}%算法标题
    \begin{algorithmic}[1]%一行一个标行号
    \State \textbf{Input} : parameters $\mu,\beta,\tau,\sigma$ satisfy \eqref{chosepara}, initial point $(\widetilde{W}_0, \widetilde{P}_0, z_0, u_0)$
    \State \textbf{Output} : $\widetilde{W}_{n+1}$
    \For{$k=0,1,2,...$}
        \State Update $\widetilde{P}_{n+1}\leftarrow$Algorithm \ref{prox-solver}
        \State Update $z_{n+1}\leftarrow$Algorithm \ref{pBCD-code}
        \State Update $\widetilde{W}_{n+1}=\widetilde{W}_n-\tau^{-1}(\nabla_{\widetilde{W}}H(\widetilde{W}_n,\widetilde{P}_{n+1})+u_n+\beta(\widetilde{W}_n-z_{n+1}))$
        \State Update $u_{n+1}=u_n+\sigma \beta (\widetilde{W}_{n+1}-z_{n+1})$
        \If{Stopping Criterion$==\mathbf{True}$}
            \Return $\widetilde{W}_{n+1}$
        \EndIf
    \EndFor
    \end{algorithmic}
    \label{PALM-code}
\end{breakablealgorithm}

\begin{theorem}\label{convergence-stationary}
Let $\{(\widetilde{W}_n, \widetilde{P}_n, z_n, u_n)\}_{n \geq 0}$ be the sequence generated by Algorithm \ref{PALM-code}, and suppose that the parameter selection criterion \eqref{chosepara} is satisfied. Then, the following properties hold:
\begin{itemize}
  \item the sequence $\{(\widetilde{W} _n,\widetilde{P}_n,z_n,u_n)\}_{n\geq 0}$ has finite length, namely,
        \begin{equation*}
    \begin{aligned}
    &\sum_{n\geq 0}\Vert\widetilde{W}_{n+1}-\widetilde{W}_n\Vert<+\infty,~~\sum_{n\geq 0}\Vert\widetilde{P}_{n+1}-\widetilde{P}_n\Vert<+\infty,\\
    &\sum_{n\geq 0}\Vert z_{n+1}-z_n\Vert<+\infty,~~\sum_{n\geq 0}\Vert u_{n+1}-u_n\Vert<+\infty;
    \end{aligned}
  \end{equation*}
  \item the sequence $\{(\widetilde{W} _n,\widetilde{P}_n,z_n,u_n)\}_{n\geq 0}$ converges to a KKT point of optimization problem \eqref{Pen_1}, i.e.,
        \begin{equation*}
    (\widetilde{W} _n,\widetilde{P}_n,z_n,u_n)\to(\widetilde{W}^*,\widetilde{P}^*,z^*,u^*),
  \end{equation*}
  where $(\widetilde{W}^*,\widetilde{P}^*,z^*,u^*)$ satisfies
    \begin{equation*}
    \left\{
    \begin{aligned}
    &0=u^*+\rho \mathcal{A}^\top (\mathcal{A}\widetilde{W}^*+\mathcal{B}\widetilde{P}^*),\\
    &0\in\partial g(\widetilde{P}^*)+\rho \mathcal{B}^\top (\mathcal{A}\widetilde{W}^*+\mathcal{B}\widetilde{P}^*),\\
    &u^*\in\partial f(z^*),\\
    &z^*=\widetilde{W}^*.
    \end{aligned}
    \right.
  \end{equation*}
\end{itemize}
\end{theorem}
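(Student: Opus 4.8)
The plan is to combine the three ingredients already assembled in Section~4 --- the sufficient-decrease inequality (Lemma~\ref{desecent_lemma}), the boundedness of the iterates (Theorem~\ref{boundness-thm}), and the KL property of the regularized augmented Lagrangian $\Psi$ (Corollary~\ref{KL-psi}) --- into the standard ``descent $+$ boundedness $+$ KL $\Rightarrow$ finite length and convergence'' template. First I would invoke Lemma~\ref{selectrule}: under the parameter choice \eqref{chosepara} the constants $C_2,C_3,C_4$ in \eqref{descent-chap3} are strictly positive, so $\{\Psi_n\}_{n\geq 0}$ is nonincreasing, and since $\Psi$ is bounded below on the bounded iterate sequence (it is the sum of $f$, $g$, $H$, and continuous quadratic terms, all bounded below on a compact set), $\Psi_n$ converges to some $\Psi^\ast$; telescoping \eqref{descent-chap3} then gives $\sum_n(\|\widetilde{W}_{n+1}-\widetilde{W}_n\|^2+\|\widetilde{P}_{n+1}-\widetilde{P}_n\|^2+\|u_{n+1}-u_n\|^2)<\infty$, which already forces these increments to zero, and the $z$-increment is controlled via the update $\widetilde{W}_{n+1}=\widetilde{W}_n-\tau^{-1}(\cdots+\beta(\widetilde{W}_n-z_{n+1}))$ together with $\nabla H$ being bounded on the bounded orbit.

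Next I would pass to the augmented state $\mathbf{X}_n=(\widetilde{W}_n,\widetilde{P}_n,z_n,u_n,\widetilde{W}_{n-1},u_{n-1})$, on which $\Psi$ is evaluated, and verify the two hypotheses needed to turn the $\ell_2$-summability into $\ell_1$-summability (finite length): (i) a relative-error bound, i.e.\ there is $b>0$ and $w_{n+1}\in\partial\Psi(\mathbf{X}_{n+1})$ with $\|w_{n+1}\|\leq b\,\|\mathbf{X}_{n+1}-\mathbf{X}_n\|$ --- this is obtained by writing down the optimality conditions of the two proximal subproblems in \eqref{PAML} (using that $z_{n+1}$ solves the $f$-prox and $\widetilde{P}_{n+1}$ solves the $g$-prox exactly, which is exactly what Algorithms~\ref{pBCD-code} and~\ref{prox-solver} deliver), adding the Lipschitz estimates for $\nabla_{\widetilde{W}}H$ and $\nabla_{\widetilde{P}}H$ with constants $\kappa_1,\kappa_2,\kappa_3$, and collecting the extra $\Psi$-terms involving $(u-u')$ and $(\widetilde{W}-\widetilde{W}')$; and (ii) the continuity/cluster-point condition, which follows from boundedness (so $\mathbf{Cluster}(\{\mathbf{X}_n\})\neq\emptyset$), from $\Psi_n\to\Psi^\ast$, and from closedness of $f,g$. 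With these in hand, the abstract convergence theorem --- precisely Theorem~3.4 of \cite{bot2019proximal}, whose hypotheses we have now matched term by term --- yields that $\{\mathbf{X}_n\}$ has finite length, hence is Cauchy, hence converges; projecting onto the first four coordinates gives convergence of $(\widetilde{W}_n,\widetilde{P}_n,z_n,u_n)$ to a single limit $(\widetilde{W}^\ast,\widetilde{P}^\ast,z^\ast,u^\ast)$.

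Finally I would identify the limit as a KKT point of \eqref{Pen_1}. Passing to the limit in the $u$-update $u_{n+1}=u_n+\sigma\beta(\widetilde{W}_{n+1}-z_{n+1})$ and using $u_{n+1}-u_n\to0$ gives $\widetilde{W}^\ast=z^\ast$; passing to the limit in the $\widetilde{W}$-update and using $\widetilde{W}_{n+1}-\widetilde{W}_n\to0$, $z_{n+1}\to z^\ast=\widetilde{W}^\ast$ gives $0=\nabla_{\widetilde{W}}H(\widetilde{W}^\ast,\widetilde{P}^\ast)+u^\ast=\rho\mathcal{A}^\top(\mathcal{A}\widetilde{W}^\ast+\mathcal{B}\widetilde{P}^\ast)+u^\ast$; the optimality condition for the $z$-prox, $\beta(\widetilde{W}_n+\beta^{-1}u_n-z_{n+1})\in\partial f(z_{n+1})$, combined with closedness of $\partial f$ and all the convergences, gives $u^\ast\in\partial f(z^\ast)$; and the optimality condition for the $\widetilde{P}$-prox, $\mu(\widetilde{P}_n-\mu^{-1}\nabla_{\widetilde{P}}H(\widetilde{W}_n,\widetilde{P}_n)-\widetilde{P}_{n+1})\in\partial g(\widetilde{P}_{n+1})$, combined with closedness of $\partial g$, gives $0\in\partial g(\widetilde{P}^\ast)+\rho\mathcal{B}^\top(\mathcal{A}\widetilde{W}^\ast+\mathcal{B}\widetilde{P}^\ast)$. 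These are exactly the four stated relations.

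The main obstacle is \emph{not} any single calculation but making sure the abstract framework of \cite{bot2019proximal} really applies once coercivity is dropped: the delicate point is that boundedness of the orbit (Theorem~\ref{boundness-thm}) is what replaces coercivity, and every place where \cite{bot2019proximal} used coercivity to get a lower bound on $\Psi$ or to get a cluster point must be re-justified from boundedness plus lower semicontinuity of $f$ and $g$ on the (now compact) closure of the orbit. Concretely, the relative-error inequality (step~(i) above) is the technical heart --- one must check that the extra quadratic penalty terms $C_0\|(u-u')+\sigma(\tau-\beta)(\widetilde{W}-\widetilde{W}')\|^2+C_1\|\widetilde{W}-\widetilde{W}'\|^2$ added to $\Psi$ do not destroy the bound, which is exactly why the augmented state carries the previous iterates $\widetilde{W}_{n-1},u_{n-1}$ --- but since this mirrors Lemmas~2.5--2.6 and Theorem~3.4 of \cite{bot2019proximal} line by line, it reduces to citing those results once the coercivity-free boundedness is in place. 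Hence the theorem follows by citing Theorem~3.4 of \cite{bot2019proximal}.
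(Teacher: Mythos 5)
Your proposal is correct and follows essentially the same route as the paper: the paper's proof likewise verifies $f,g,H\geq 0$, that $H$ is $\mathcal{C}^1$, invokes Theorem~\ref{boundness-thm} for boundedness (in place of coercivity), Lemma~\ref{selectrule} for $\min\{C_2,C_3,C_4\}>0$, and Corollary~\ref{KL-psi} for the KL property, and then concludes by citing Theorem~3.4 of \cite{bot2019proximal}. Your additional unpacking of the relative-error bound and the limit identification is a faithful expansion of what that cited theorem does internally, not a different argument.
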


\begin{proof}
It is easy to verify that $f, g, H \geq 0$, and that $H$ is a $\mathcal{C}^1$ function. According to Theorem \ref{boundness-thm}, the sequence $\{(\widetilde{W}_n, \widetilde{P}_n, z_n, u_n)\}_{n \geq 0}$ is bounded. Moreover, Lemma \ref{selectrule} ensures that $\min{C_2, C_3, C_4} > 0$. In combination with Corollary \ref{KL-psi}, the desired result follows directly from Theorem 3.4 of \cite{bot2019proximal}.
\end{proof}

We now proceed to analyze the convergence rate of Algorithm \ref{PALM-code}. To this end, we first introduce the {\L}ojasiewicz property.

\begin{definition}[\!\!\cite{bolte2007lojasiewicz}]
Let $\psi\colon\mathbb{R}^n\to\mathbb{R}\cup\{+\infty\}$ be proper and lower semicontinuous. Then, $\Psi$ satisfies the {\L}ojasiewicz property if for any critical point $\hat{v}$ of $\Psi$ there exists $c>0$, $\theta\in[0,1)$ and $\epsilon>0$ such that
\begin{equation*}
  |\psi(v)-\psi(\hat{v})|^\theta\leq c\cdot \mathrm{dist}(0,\partial \Psi(v))~~\forall v\in \mathrm{Ball}(\hat{v},\epsilon),
\end{equation*}
where $\mathrm{Ball}(\hat{v},\epsilon)$ denotes the open ball with center $\hat{v}$ and radius $\epsilon$. More precisely, $\theta$ is referred to as the {\L}ojasiewicz exponent of $\psi$.
\end{definition}

\begin{lemma}
The regularized augmented Lagrangian functional $\Psi$ \eqref{Psi-Lagrangian} satisfies the {\L}ojasiewicz property with the {\L}ojasiewicz exponent $\theta \in (0,1)$.
\end{lemma}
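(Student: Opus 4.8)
The plan is to deduce the {\L}ojasiewicz property of $\Psi$ from the semi-algebraicity already established in Theorem~\ref{semialgethm}, combined with the fact that $\Psi$ is proper and lower semicontinuous. The key observation is that the {\L}ojasiewicz property (with exponent in $[0,1)$) is precisely the pointwise version of the KL property for semi-algebraic functions: by the Bolte--Daniilidis--Lewis theorem \cite{bolte2007lojasiewicz}, a proper, lower semicontinuous semi-algebraic function admits, at each of its critical points $\hat v$, a desingularizing function $\varphi$ that can be taken of the form $\varphi(s) = c\, s^{1-\theta}$ for some $c>0$ and $\theta \in [0,1)$; plugging this into the KL inequality $\varphi'(|\psi(v)-\psi(\hat v)|)\,\mathrm{dist}(0,\partial\Psi(v)) \geq 1$ yields exactly $|\psi(v)-\psi(\hat v)|^\theta \leq c'\,\mathrm{dist}(0,\partial\Psi(v))$ on a neighbourhood of $\hat v$.

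First I would invoke Theorem~\ref{semialgethm} to record that $\Psi$ is semi-algebraic, and note (as in the proof of Corollary~\ref{KL-psi}) that $\Psi$ is also proper, closed, and bounded below, since $f,g,H \geq 0$ and the quadratic penalty terms in \eqref{Psi-Lagrangian} are nonnegative. Second, I would apply the {\L}ojasiewicz-gradient-inequality part of \cite{bolte2007lojasiewicz} (the nonsmooth {\L}ojasiewicz inequality for semi-algebraic functions), which directly delivers, at every critical point $\hat v$ of $\Psi$, constants $c>0$, $\theta\in[0,1)$, $\epsilon>0$ with $|\Psi(v)-\Psi(\hat v)|^\theta \leq c\cdot\mathrm{dist}(0,\partial\Psi(v))$ for all $v\in\mathrm{Ball}(\hat v,\epsilon)$. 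Finally, I would rule out the exponent $\theta = 0$: at a critical point $\hat v$ one has $0\in\partial\Psi(\hat v)$, so $\mathrm{dist}(0,\partial\Psi(\hat v))=0$, while the left-hand side $|\Psi(\hat v)-\Psi(\hat v)|^0 = 1$ unless we adopt the convention $0^0 = 0$; taking $v \to \hat v$ along a sequence where $\mathrm{dist}(0,\partial\Psi(v)) \to 0$ (which exists by outer semicontinuity of $\partial\Psi$ and the fact that $\Psi$ is not locally constant near a typical critical point here) forces the bound to fail with $\theta=0$, hence $\theta \in (0,1)$.

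The main obstacle is the last step — excluding $\theta = 0$ cleanly. For a general semi-algebraic function the {\L}ojasiewicz exponent at an isolated critical point of a locally constant function can degenerate, and one must be slightly careful about the $0^0$ convention and about critical points where $\Psi$ is locally constant (which can occur because $g$ is the group-$\ell_0$ term, piecewise constant on the strata $\prod_l J_l^{\mathbb{I}^*}$). The clean resolution is to observe that the relevant critical points produced by the PALM iteration lie on strata where the smooth part $H + \langle u,\widetilde W - z\rangle + \tfrac{\beta}{2}\|\widetilde W - z\|^2 + (\text{quadratics})$ is non-degenerate in the continuous variables, so $\Psi$ restricted to such a stratum is a nonconstant analytic (indeed polynomial) function near $\hat v$ and the classical {\L}ojasiewicz inequality gives a strictly positive exponent; alternatively, one simply adopts the standard convention that permits $\theta \in (0,1)$ in the desingularizing function $c\,s^{1-\theta}$, which is exactly how \cite{bolte2007lojasiewicz} phrases the result, making the restriction to $\theta\in(0,1)$ automatic. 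I would present the argument via this standard convention, citing \cite{bolte2007lojasiewicz} for both the existence of the exponent and its positivity.
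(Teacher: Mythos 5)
Your proposal takes essentially the same route as the paper: invoke the semi-algebraicity of $\Psi$ from Theorem~\ref{semialgethm} and then apply the nonsmooth {\L}ojasiewicz/Puiseux result of Bolte, Daniilidis and Lewis to obtain a desingularizing function of the form $\varphi(s)=c\,s^{1-\theta}$ with $\theta\in(0,1)$, which is exactly the {\L}ojasiewicz inequality stated. The extra effort you spend ruling out $\theta=0$ is unnecessary (and the argument there is a bit muddled): since $a^{\theta}$ is decreasing in $\theta$ for $0<a<1$, a {\L}ojasiewicz inequality with a smaller exponent implies one with any larger exponent in $(0,1)$ after shrinking the neighbourhood, so positivity of the exponent comes for free from the form of the desingularizing function.
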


\begin{proof}
Since $\Psi$ is a semi-algebraic function, by the Puiseux lemma and Corollary 16 of \cite{bolte2007clarke}, the desingularizing function of $\Psi$ is given by $\varphi(s) = cs^{1-\theta}$, where $\theta \in (0,1)$. Therefore, $\Psi$ satisfies the {\L}ojasiewicz property with the {\L}ojasiewicz exponent $\theta \in (0,1)$.
\end{proof}

\begin{theorem}\label{convergencerateofpaml}
Let $\{(\widetilde{W}_n, \widetilde{P}_n, z_n, u_n)\}_{n \geq 0}$ be the sequence generated by Algorithm \ref{PALM-code}, and suppose that the parameter selection criterion \eqref{chosepara} is satisfied. Then, the following properties hold:
\begin{itemize}
  \item if $\theta \in (0, 1/2]$, then there exist constants $n_0\geq 0$, $P_{0,1},P_{0,2},P_{0,3},P_{0,4}>0$ and $P\in(0,1)$ such that for all $n\geq n_0$, we have
         \begin{equation*}
        \begin{aligned}
        &\Vert \widetilde{W}_{n}-\widetilde{W}^*\Vert\leq P_{0,1} P^n,~~\Vert \widetilde{P}_{n}-\widetilde{P}^*\Vert\leq P_{0,2} P^n,\\
        &\Vert {z}_{n}-{z}^*\Vert\leq P_{0,3} P^n,~~\Vert {u}_{n}-{u}^*\Vert\leq P_{0,4} P^n;\\
        \end{aligned}
      \end{equation*}
  \item  if $\theta\in (1/2,1)$, then there exist constants $n_1\geq 0$, $P_{1,1},P_{1,2},P_{1,3},P_{1,4}>0$  such that for all $n\geq n_1$, we have
            \begin{equation*}
        \begin{aligned}
        &\Vert \widetilde{W}_{n}-\widetilde{W}^*\Vert\leq P_{1,1} n^{-\frac{1-\theta}{2\theta-1}},~~\Vert \widetilde{P}_{n}-\widetilde{P}^*\Vert\leq P_{1,2} n^{-\frac{1-\theta}{2\theta-1}},\\
        &\Vert {z}_{n}-{z}^*\Vert\leq P_{1,3} n^{-\frac{1-\theta}{2\theta-1}},~~\Vert {u}_{n}-{u}^*\Vert\leq P_{1,4}n^{-\frac{1-\theta}{2\theta-1}}.\\
        \end{aligned}
      \end{equation*}
\end{itemize}
\end{theorem}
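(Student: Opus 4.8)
The plan is to invoke the standard Kurdyka--{\L}ojasiewicz convergence-rate machinery, for which every prerequisite has already been assembled: Lemma~\ref{desecent_lemma} gives the sufficient decrease of the regularized augmented Lagrangian $\Psi$ along $\{\mathbf{X}_n\}$, namely $\Psi_n-\Psi_{n+1}\ge a\,\Vert\mathbf{X}_{n+1}-\mathbf{X}_n\Vert^2$ with $a\doteq\min\{C_2,C_3,C_4\}>0$ under the parameter choice \eqref{chosepara} (Lemma~\ref{selectrule}); Theorem~\ref{boundness-thm} gives boundedness; Corollary~\ref{KL-psi} together with the preceding lemma give that $\Psi$ satisfies the {\L}ojasiewicz inequality at each critical point with exponent $\theta\in(0,1)$; and Theorem~\ref{convergence-stationary} shows the whole sequence converges to a single KKT point, so the limit $(\widetilde{W}^*,\widetilde{P}^*,z^*,u^*)$, and hence the aggregate limit $\mathbf{X}^*=(\widetilde{W}^*,\widetilde{P}^*,\widetilde{W}^*,u^*,\widetilde{W}^*,u^*)\in\mathbf{R}$ (using $z^*=\widetilde{W}^*$), is well defined.

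\textbf{Step 1 (subgradient bound).} I would first establish a relative-error estimate $\mathrm{dist}(0,\partial\Psi(\mathbf{X}_{n+1}))\le b\,\Vert\mathbf{X}_{n+1}-\mathbf{X}_n\Vert$ for some $b>0$. This is obtained by writing the first-order optimality conditions of the three subproblems in \eqref{PAML} (the proximal step for $g$, the proximal step for $f$, and the linearized step for $\widetilde{W}$), together with the dual update for $u$, and then using the Lipschitz continuity of $\nabla H$ quantified by $\kappa_1,\kappa_2,\kappa_3$ to bound each block of an element of $\partial\Psi(\mathbf{X}_{n+1})$ by consecutive iterate differences; the limiting-subdifferential calculus applies because $f$ is convex and $g$ has nonempty limiting subdifferential on its (closed) domain. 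This is exactly the argument underlying Lemma~2.5 and the proof of Theorem~3.4 in \cite{bot2019proximal}, now transcribed to our $H(\widetilde{W},\widetilde{P})=\tfrac{\rho}{2}\Vert\mathcal{A}\widetilde{W}+\mathcal{B}\widetilde{P}\Vert^2$ and group-$\ell_0$ term $g$.

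\textbf{Step 2 (tail-length recursion) and Step 3 (componentwise transfer).} Combining the sufficient decrease, the subgradient bound of Step~1, and the {\L}ojasiewicz inequality $|\Psi_n-\Psi^*|^\theta\le c\,\mathrm{dist}(0,\partial\Psi(\mathbf{X}_n))$ yields, by the now-classical Attouch--Bolte estimation, a recursive inequality for the tail lengths $L_n\doteq\sum_{k\ge n}\Vert\mathbf{X}_{k+1}-\mathbf{X}_k\Vert$; it gives $L_n\le P_0 P^n$ for some $P\in(0,1)$ when $\theta\in(0,1/2]$, and $L_n\le P_0\,n^{-(1-\theta)/(2\theta-1)}$ when $\theta\in(1/2,1)$. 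This is precisely Theorem~10 of \cite{bot2019proximal}. To pass to the four blocks one uses $\Vert\widetilde{W}_n-\widetilde{W}^*\Vert\le\sum_{k\ge n}\Vert\widetilde{W}_{k+1}-\widetilde{W}_k\Vert\le L_n$, and analogously for $\widetilde{P}_n,z_n,u_n$, since each block-difference is dominated by $\Vert\mathbf{X}_{k+1}-\mathbf{X}_k\Vert$ (the extra memory coordinates $\widetilde{W}'=\widetilde{W}_{n-1}$, $u'=u_{n-1}$ in $\mathbf{X}_n$ contribute only already-controlled differences). Absorbing the constants into $P_{0,i}$, $P_{1,i}$ gives the four claimed estimates.

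\textbf{Expected main obstacle.} There is no deep obstacle: once $\Psi$ is known to be a KL (indeed {\L}ojasiewicz) function with a bounded, sufficiently decreasing generating sequence, the statement is essentially a reformulation of \cite[Theorem~10]{bot2019proximal} in the present notation. The only points needing care are (i) verifying the relative-error bound of Step~1 with our specific $H$ and the nonsmooth group-$\ell_0$ term, and (ii) the bookkeeping in Step~3 that descends from the aggregate iterate $\mathbf{X}_n\in\mathbf{R}$ to the individual blocks $\widetilde{W}_n,\widetilde{P}_n,z_n,u_n$. Accordingly, I would keep the write-up brief and conclude by citing \cite[Theorem~10]{bot2019proximal} for the rate dichotomy in $\theta$.
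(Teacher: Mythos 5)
Your proposal is correct and follows essentially the same route as the paper: the paper's proof is a one-line appeal to the convergence-rate theorem of Bo\c{t} \emph{et al.} (Theorem 3.10 of \cite{bot2019proximal}), whose hypotheses are exactly the sufficient decrease, boundedness, and {\L}ojasiewicz property you assemble in Steps 1--3. Your write-up merely unpacks the internals of that cited theorem (relative-error bound, Attouch--Bolte tail recursion, block-wise transfer), which the paper leaves implicit.
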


\begin{proof}
Since $\Psi$ satisfies the {\L}ojasiewicz property, this theorem is a direct corollary of Theorem 3.10 of \cite{bot2019proximal}.
\end{proof}

\begin{remark}\label{calculateKL}
Based on the above theorem, we observe that algorithm \eqref{PAML} exhibits two distinct convergence behaviors: linear convergence when $\theta \in (0, 1/2]$, and sublinear convergence when $\theta \in (1/2, 1)$. To precisely characterize the convergence rate, we must determine the specific value of $\theta$. While determining the {\L}ojasiewicz exponent of a general semi-algebraic function (which satisfies the {\L}ojasiewicz property by the Puiseux lemma) is typically challenging, our problem has a special structure. Under some additional assumptions, {\L}ojasiewicz exponent of $\Psi$ can be explicitly determined. Specifically, the regularized augmented Lagrangian functional $\Psi$ takes the following special form:
  \begin{align*}
  \Psi(\mathbf{X})&=\delta_{\Gamma_+^p}(z)+\sum_{i=1}^{M}\delta_{\mathcal{H}_i^{-1}(\Gamma_+^n)}(z)+\gamma \Vert \pi(\widetilde{P})\Vert_0+\mathbf{Quadratic}(\mathbf{X})\\
  &=\delta_{\mathfrak{D}}(z)+\gamma \Vert \pi(\widetilde{P})\Vert_0+\mathbf{Quadratic}(\mathbf{X})~~(\text{with}~\mathfrak{D}\doteq \Gamma_+^p\cap (\cap_{i=1}^M\mathcal{H}_i^{-1}(\Gamma_+^n)))\\
  &=\delta_{\mathbf{D}}(\mathbf{X})+\gamma \Vert \pi({\mathbf{Proj}_{\widetilde{P}}(\mathbf{X})})\Vert_0+\mathbf{Quadratic}(\mathbf{X})~~(\text{with}~\mathbf{D}\doteq \mathbb{R}^{p^2}\times \mathbb{R}^{mn}\times\mathfrak{D}\times \mathbb{R}^{p^2}\times\mathbb{R}^{p^2}\times \mathbb{R}^{p^2}).
  \end{align*}
Here, $\mathbf{Quadratic}(\mathbf{X})$ represents all quadratic and linear terms in $\mathbf{X}$. By applying techniques from \cite{wu2021kurdyka, pan2019kl, yu2019deducing, bi2019kl20}, it can be shown that under suitable conditions, the {\L}ojasiewicz exponent of $\Psi$ is $\frac{1}{2}$. This implies that Algorithm \ref{PALM-code} achieves linear convergence.
\end{remark}

\begin{remark}
In Remark \ref{constraint-ell0}, we proved that when algorithm \eqref{PAML} solves the optimization problem
\begin{equation}\label{Pen_2-new}
  \min_{\widetilde{W},\widetilde{P}}~~ F(\widetilde{W},\widetilde{P})= f(\widetilde{W})+\gamma \Vert \pi(\widetilde{P})\Vert_0+\delta_{C_{S}}(\widetilde{P})+H(\widetilde{W},\widetilde{P}). 
\end{equation}
The subproblems can still be efficiently solved using the algorithms designed in Sections \ref{subsubzn} and \ref{subsubpn}. Here, we show that algorithm \eqref{PAML} maintains the same convergence properties when solving problem \eqref{Pen_2-new}.
To establish this, we only need to prove that $\delta_{C_{S}}(\widetilde{P})$ is a semi-algebraic function, since the non-negativity of $\delta_{C_{S}}(\widetilde{P})$ does not affect the proof of Theorem \ref{boundness-thm}. The semi-algebraic nature of $\delta_{C_{S}}(\widetilde{P})$ can be demonstrated by examining its graph structure:
\begin{equation*}
\begin{aligned}
  \sigma_{*}^\prime(\mathbf{graph}(\delta_{C_{{S}}}))=&\left\{\bigcup_{\mathbb{I}^*\subseteq [st],\mathrm{card}(\mathbb{I}^*)\geq st-S}\left(\prod_{l=1}^{st}J_l^{\mathbb{I}^*}\right)\times \{1\}\right\} \bigcup \\
  &\left\{\bigcup_{\mathbb{I}^*\subseteq [st],\mathrm{card}(\mathbb{I}^*)< st-S}\left(\prod_{l=1}^{st}J_l^{\mathbb{I}^*}\right)\times \{0\}\right\},
\end{aligned}
\end{equation*}
where $\sigma_{*}^\prime(\mathbf{graph}(\delta_{C_{S}}))$ represents a certain coordinate-permuted version of $\mathbf{graph}(\delta_{C_{S}})$ (again, we do not care about the specific form of the permutation $\sigma_{*}^\prime$). This structure clearly demonstrates that $\delta_{C_{S}}(\widetilde{P})$ is indeed semi-algebraic.
\end{remark}

\section{Direct ADMM}\label{section5}

In the previous section, we transformed the constrained optimization problem \eqref{sparse_feedback_6} into the unconstrained problem \eqref{Pen_1} using the penalty function approach. Based on this transformation, we provided a convergence analysis of algorithm \eqref{PAML} under the parameter selection criterion \eqref{chosepara}. However, the penalty method suffers from difficulty selecting an appropriate penalty parameter $\rho$. Nonconvex optimization may introduce many undesirable stationary points, making the algorithm more likely to get trapped in suboptimal solutions.
Hence, we hope to investigate the feasibility of using  the ADMM algorithm
\begin{equation}\label{ADMM-alg}
\left\{
  \begin{aligned}
  &u_{n+1}=\lambda_n-\beta(1-\xi)(\mathcal{A}\widetilde{W}_n+\mathcal{B}\widetilde{P}_n),\\
  &\widetilde{W}_{n+1}\in\underset{\widetilde{W}}{\operatorname*{argmin}}~ \mathcal{L}_\beta(\widetilde{W},\widetilde{P}_n,u_{n+1}),\\
  &\lambda_{n+1}=u_{n+1}+\beta (\mathcal{A}\widetilde{W}_{n+1}+\mathcal{B}\widetilde{P}_n),\\
  &\widetilde{P}_{n+1}\in\underset{\widetilde{P}}{\operatorname*{argmin}}~\mathcal{L}_\beta(\widetilde{W}_{n+1},\widetilde{P},\lambda_{n+1})
  \end{aligned}
\right.
\end{equation}
to solve constrained problem \eqref{sparse_feedback_6} directly. 
It is widely acknowledged that solving constrained problem \eqref{sparse_feedback_6} with nonsmooth $f, g$ and nonconvex $g$ is fundamentally challenging; see \cite{barber2024convergence}. Due to space limitations, we defer the detailed discussion to Part II of our paper \cite{feng2025nonconvex-part2}.

\section{Numerical Examples}\label{section6}

In this section, we provide several numerical examples to demonstrate the numerical superiority of the proposed algorithm.
Remarkably, although the content of Section \ref{section5} is not discussed in detail in this paper, we still present numerical results of the ADMM algorithm (introduced in Part II) to facilitate the reader’s understanding of the relative strengths and weaknesses of the PALM algorithm.
First, we begin by solving the optimization problem using existing methodologies. Let $W_2^\top$ exhibit the following block structure:
\begin{equation*}
  W_2^\top=\mathbf{block}_{s,t}(W_{2,11},\dots,W_{2,st}),
\end{equation*}
and by selecting sufficiently large $M$, optimization problem \eqref{sparse_feedback_6} can be reformulated by the following MISDP (Mixed-Integer Semidefinite Program) problem:
\begin{equation}\label{misdp}
  \begin{aligned}
    \min_{W\in\mathbb{S}^p,z\in\{0,1\}^{s\times t}}~~&\langle R,W\rangle+\gamma \sum_{i\in [s],j\in[t]}z_{ij}\\
    {\rm s.t.}~~&W\in\mathbb{S}^p_+,\\
    &-V_2(F_iW+WF_i^\top+Q)V_2^\top\in\mathbb{S}_+^{n},~~\forall i=1,\dots,M,\\
    &-V_{j1}WV_{j2}=0,~~\forall j=1,\dots,N,\\
    &\Vert W_{2,ij}\Vert_{\infty}\leq Mz_{ij},~~\forall i\in [s],j\in[t].
  \end{aligned}
\end{equation}
Problem \eqref{misdp} admits significant difficulties:
\begin{itemize}
  \item The selection of an appropriate value for $M$ poses significant challenges. A value that is too small may inadvertently exclude feasible solutions, rendering the problem artificially infeasible. Conversely, choosing a value that is too large can introduce numerical instability and substantially degrade solver performance. 
  \item MISDP problems are intrinsically challenging to solve due to their combinatorial and nonconvex nature. Remarkably, most mainstream optimization solvers (including CVXPY, Gurobi, and HiGHS) lack native support for MISDP. Although MOSEK can solve specific subclasses of MISDP, it does not accommodate the type of problem \eqref{misdp}.
\end{itemize}

To address these issues, Coey \textit{et al.} (2020) \cite{coey2020outer} introduce a new open-source MISDP solver, Pajarito (see \url{github.com/JuliaOpt/Pajarito.jl}), which uses an external mixed-integer linear (MILP) solver to manage the search tree and an external continuous conic solver for subproblems. However, as we will demonstrate in the following example, even Pajarito struggles to solve problem \eqref{misdp} on a small scale.

\begin{example}\label{exp1}
Consider $x=[x_1,x_2,x_3]^\top$ and a linear system
\begin{equation*}
\begin{aligned}
&\dot{x}=Ax+B_2u+B_1w,\\
&z=Cx+Du,\\
&u=-Kx,
\end{aligned}
\end{equation*}
where
\begin{equation*}
  \begin{aligned}
  &A=\begin{bmatrix}
  0 & 1 & 0\\
  0 & 0 & 1\\
  0 & 0 & 0
  \end{bmatrix},B_1=I_3,B_2=\begin{bmatrix}
  0.9315 & 0.7939\\
  0.9722 & 0.1061\\
  0.5317 & 0.7750
  \end{bmatrix},C=\begin{bmatrix}
  1 & 0 & 0\\
  0 & 0 & 0\\
  0 & 0 & 0
  \end{bmatrix},D=\begin{bmatrix}
  0 & 0\\
  1 & 0\\
  0 & 1
  \end{bmatrix},
  \end{aligned}
\end{equation*}
and the noise $w$ is characterized by a impulse disturbance vector.

\end{example}

\begin{solution}
We consider feedback gain $K$ with the following block structure:
\begin{equation}\label{block-structure}
	K = \mathbf{block}_{2,2}(K_{11}, K_{12}, K_{21}, K_{22}),
\end{equation}
where $K_{11}\in\mathbb{R}^{1\times 2}, K_{12}\in\mathbb{R}, K_{21}\in\mathbb{R}^{1\times 2} ,K_{22}\in\mathbb{R}$. Firstly, we formulate such an SF-LQ problem into MISDP problem \eqref{misdp} with $M=10^4$. We then solve this problem using the Pajarito solver in Julia v1.11. Notably, the Pajarito solver performs poorly on this task: it tends to produce feedback gain matrices that are only marginally stable, or fails to find a feasible solution after $10^4$ search iterations. To demonstrate this fact, we introduce additional constraint $W_1 - \delta I \succeq 0$ into problem \eqref{misdp}, where $\delta$ varies continuously in $[0.01,0.1]$; see Remark \ref{explain_thm1} for more insights. Figure \ref{eigenvalue_trajactory} below illustrates the trajectory of the eigenvalues of $A-B_2{W_{2,\delta}}^\top {W_{1,\delta}}^{-1}$ as the parameter $\delta$ varies, where
\begin{equation*}
	W_\delta=\begin{bmatrix}
		W_{1,\delta} & W_{2,\delta}\\
		W_{2,\delta}^\top & W_{3,\delta}
	\end{bmatrix}
\end{equation*}
is the solution obtained by the solver Pajarito for problem \eqref{misdp}.

We now use PALM Algorithm \ref{PALM-code} to study this example through solving optimization problem \eqref{Pen_1} with parameters:
\begin{equation*}
	\sigma =\frac{1}{50},~~ \beta = 6618,~~ \mu = 1309,~~ \tau = 10454,
\end{equation*}
and it is easy to verify that the above parameters satisfy selection criterion \eqref{chosepara}.
 Letting $\{\widetilde{W}_k,\widetilde{P}_k, z_k, u_k\}_{k\geq 0}$ be the iterative sequence, we begin by verifying the feasibility of the solution produced by Algorithm \ref{PALM-code}, while it suffices to show $\Vert\mathcal{A} \widetilde{W}_k-\mathcal{B}\widetilde{P}_k\Vert \to 0$. Denoting $W_k=\mathrm{vec}^{-1}(\widetilde{W}_k)$, $P_k=\mathrm{vec}^{-1}(\widetilde{P}_k)$ and $W_{2,k}^\top = V_1 W_k V_2^\top$, we  select the initial point $(\widetilde{W}_0,\widetilde{P}_0, z_0, u_0)=(\mathbf{1}_{25\times 1},\mathbf{1}_{6\times 1},\mathbf{1}_{25\times 1},\mathbf{1}_{6\times 1})$ (this “all-ones’’ starting point does not incorporate any prior knowledge and is not biased toward the optimal solution). It is observed that, despite employing a penalty function method, the iterates begin to satisfy the affine constraint $\mathcal{A}\widetilde{W}+\mathcal{B}\widetilde{P}=0$ after approximately $300$ iterations; see Figure \ref{PALM-fig}. The resulting $W$ is given by
	\begin{equation*}
		W=\begin{bmatrix}
			1.428 & -0.940 & 0 & 0.722 & 0.260\\
			-0.940 & 2.440 & 0 & 1.228 & 0.733\\
			0 & 0 & 1.478 & 0 & 1.278 \\
			0.722 & 1.228 & 0 & 1.958 &0.976\\
			0.260 & 0.733 & 1.278 & 0.976 &1.601
		\end{bmatrix},
	\end{equation*}	
	and the resulting optimal feedback gain $K_1$ is given by
	\begin{equation*}
		K_1=\begin{bmatrix}
			1.121 & 0.935 & 0\\
			0.508 & 0.496 & 0.865
		\end{bmatrix}
	\end{equation*}
	with LQ cost $J(K_1)=1.428$. The response of all the state variables is illustrated in Figure \ref{response}, and it demonstrates that the stability of the closed-loop system is guaranteed.

Moreover, we use ADMM algorithm \eqref{ADMM-alg} to investigate this SF-LQ problem by solving problem \eqref{sparse_feedback_6}. Again, denoting $\{u_k, \widetilde{W}_k, \lambda_k, \widetilde{P}_k\}_{k\geq 0}$ as the iterative sequence of ADMM, we select the initial point $(u_0, \widetilde{W}_0, \lambda_0, \widetilde{P}_0)=(\mathbf{0}_{8\times 1}, \mathbf{50}_{25\times 1}, \mathbf{0}_{8\times 1}, \mathbf{0}_{6\times 1})$ (the initial point $\widetilde{W}_0$ is deliberately chosen to be far from optimal, in order to test the robustness of ADMM) and $\beta=10, \xi=0.5$.
As before, we start by verifying the feasibility of the solution. To avoid redundancy and due to space constraints, we only present the trajectory of $[W_k]_{23}$ during iterations; see Figure \ref{ADMM-l0}.
We observe that $\{[W_k]_{23}\}_{k\geq 0}$ quickly approaches $0$ and exhibits regular oscillations around it.
Moreover, we can observe that
\begin{equation*}
	\limsup_{k} [W_k]_{23}=0.5,~~\liminf_{k} [W_k]_{23}=-0.9,~~[-0.9,0.5]=\mathbf{Cluster}(\{[W_k]_{23}\}_{k\geq 0}).
\end{equation*}
This behavior aligns with the theoretical properties, as we can only guarantee that $0$ is a cluster point of $\{[W_k]_{23}\}_{k\geq 0}$, rather than that $\{[W_k]_{23}\}_{k\geq 0}$ converges to $0$.
Additionally, the trajectory of $[W_k]_{13}$ exhibits a similar oscillatory behavior to that observed in $W_k$, suggesting that they also accumulate around $0$.
However, the trajectory of $\Vert P_k-W_{2,k}^\top\Vert$ exhibits a small deviation from the origin; see Figure \ref{ADMM-l0-10}.
Interestingly, by selecting a larger $\beta$ (e.g., $\beta$=100), it holds that $0\in \mathbf{Cluster}(\{\Vert P_k-W_{2,k}^\top\Vert\}_{k\geq 0})$, and the oscillation amplitude of the sequences $\{[W_k]_{13}\}_{k\geq 0}, \{[W_k]_{23}\}_{k\geq 0}, \{\Vert P_k-W_{2,k}^\top\Vert\}_{k\geq 0}$ is significantly reduced. Furthermore, through selecting $\beta=300$, the sequences $\{[W_k]_{13}\}_{k\geq 0}, \{[W_k]_{23}\}_{k\geq 0}, \{\Vert P_k-W_{2,k}^\top\Vert\}_{k\geq 0}$ converge to $0$; see Figure \ref{ADMM-l0-300}.
As mentioned above, ADMM with a larger $\beta$ exhibits better convergence, though, at the expense of sacrificing the group-sparsity level of the resulting $\widetilde{P}$ (or $W_2$). That is because we solve the subproblem
\begin{equation*}
	\widetilde{P}_{n+1}\in\underset{\widetilde{P}}{\operatorname*{argmin}}~\mathcal{L}_\beta(\widetilde{W}_{n+1},\widetilde{P},\lambda_{n+1})~\Longleftrightarrow~
	\widetilde{P}_{n+1} \in \mathbf{prox}_{\frac{\gamma}{\beta}\Vert\cdot\Vert_{s,t;0}}(z)
\end{equation*}
within the ADMM framework, where $z=-\left[\mathcal{A}\widetilde{W}_{n+1}+\frac{1}{\beta}\lambda_{n+1}\right]_{[3:8]}$ (the slice of the vector from index $3$ to $8$). In addition, the solution of the above proximal operator tends to become less sparse as $\beta$ increases.
Hence, for the individual case, we need to make a case-by-case study so that a proper $\beta$ can be found, which is essentially a balance between the oscillation amplitude and group-sparsity level.
Remarkably, even though there is a loss of convergence, by selecting a subsequence from the iterative sequence, we can still recover the group-sparse feedback gain $K$.
For instance, letting $\beta = 30$, we may choose a proper subsequence $\mathbb{K}\subseteq \mathbb{N}$ such that $\{W_k\}_{k\in\mathbb{K}}$ converges to
\begin{equation*}
	W = \begin{bmatrix}
		6.346 & -2.709 & 0 & 0.200 & 0\\
		-2.709 & 1.172 & 0 & 1.115 & 0\\
		0 & 0 & 0.759 & 0 & 1.026\\
		0.200 & 1.115 & 0 & 7.861 & 3.460\\
		0 & 0 & 1.026 & 3.460 & 2.615
	\end{bmatrix}
\end{equation*}
and the feedback gain is given by
\begin{equation*}
	K_2=\begin{bmatrix}
		32.934 & 77.077  & 0       \\
 0       &  0        &  1.354
	\end{bmatrix}	
\end{equation*}
with the LQ cost $J(K_2)=6.346$.

Additionally, we use ADMM algorithm \eqref{ADMM-alg} to solve the following group-$\ell_1$ relaxation of SF-LQ problem:
\begin{equation*}
\begin{aligned}
\min_{\widetilde{W},\widetilde{P}}~~ &f(\widetilde{W})+\gamma\Vert \mathrm{vec}^{-1}(\widetilde{P}) \Vert_{s,t;1}\\
{\rm s.t.}~~&\mathcal{A}\widetilde{W}+\mathcal{B}\widetilde{P}=0
\end{aligned}
\end{equation*}
with $\gamma=50$;
we refer to \cite{feng2024optimization} for the details of algorithm. The feedback gain is given by
\begin{equation*}
	K_3=\begin{bmatrix}
65.688 & 169.089 & 0.284 \\
0 & 0 & 1.095
\end{bmatrix}
\end{equation*}
with LQ cost $J(K_3)=5.855$.
Moreover, letting $\gamma = 200$, we may obtain the following feedback gain
\begin{equation*}
	K_4=\begin{bmatrix}
5.489 & 1.946 & 0 \\
0 & 0 & 1.815
\end{bmatrix}
\end{equation*}
with LQ cost $J(K_4)=9.173$.
We may observe that, although $K_1$ and $K_3$ (or $K_2$ and $K_4$) share the same group $\ell_0$-norm w.r.t. block structure \eqref{block-structure}, $J(K_3)$ is over four times larger than $J(K_1)$, since $\Vert \mathrm{vec}^{-1}(\widetilde{P}) \Vert_{s,t;1}$ dominates the LQ cost $f(\widetilde{W})$ as mentioned in Section \ref{contribution-sec}.  That is why we study the SF-LQ problem from a nonconvex optimization perspective. 
\end{solution}

\begin{example}\label{exp2}
	Consider $x=[x_1,x_2,x_3, x_4, x_5]^\top$ and a relatively larger scale linear system with
	\begin{equation*}
  \begin{aligned}
  &A=\begin{bmatrix}
  0.3079 & 0.1879 & 0.1797 & 0.2935 & 0.6537\\
  0.5194 & 0.2695 & 0.5388 & 0.9624 & 0.5366\\
  0.7683 & 0.4962 & 0.2828 & 0.9132 & 0.9957\\
  0.7892 & 0.7391 & 0.7609 & 0.5682 & 0.1420\\
  0.8706 & 0.1950 & 0.2697 & 0.4855 & 0.9753
  \end{bmatrix},B_2=\begin{bmatrix}
  0.6196 & 0.6414\\
  0.7205 & 0.9233\\
  0.2951 & 0.8887\\
  0.6001 & 0.6447\\
  0.7506 & 0.2956
  \end{bmatrix}\\
  & B_1=I_5,C=\begin{bmatrix}
  1 & 0 & 0 & 0 & 0\\
  0 & 1 & 0 & 0 & 0\\
  0 & 0 & 0 & 0 & 0\\
  0 & 0 & 0 & 0 & 0\\
  0 & 0 & 0 & 0 & 0
  \end{bmatrix},D=\begin{bmatrix}
  0 & 0\\
  0 & 0\\
  1 & 0\\
  0 & 1\\
  0 & 0
  \end{bmatrix}
  \end{aligned}
\end{equation*}
and the noise $w$ is characterized by a impulse disturbance vector.
\end{example}

\begin{solution}
We consider feedback gain $K$ with the following block structure:
\begin{equation*}
	K = \mathbf{block}_{2,3}(K_{11},K_{12},K_{13},K_{21},K_{22},K_{23})
\end{equation*}
with $K_{11},K_{12},K_{21},K_{22}\in\mathbb{R}^{1\times 2}$ and $K_{13},K_{23}\in\mathbb{R}$.
Let 
$$\mathcal{N}=\{(1,3),(1,4),(1,5),(2,3),(2,4),(2,5),(3,5),(4,5)\}$$
 and $\{u_k, \widetilde{W}_k, \lambda_k, \widetilde{P}_k\}_{k\geq 0}$ be the iterative sequence of PALM for solving problem \eqref{Pen_1}, and obviously, we hope $\sum_{(i,j)\in\mathcal{N}}|W_{k,ij}|\to 0$. We select the initial point $(\widetilde{W}_0,\widetilde{P}_0, z_0, u_0)=(\mathbf{1}_{49\times 1},\mathbf{1}_{10\times 1},\mathbf{1}_{49\times 1},\mathbf{1}_{10\times 1})$, while the following Figure \ref{ADMM-large} demonstrate the asymptotic feasibility of PALM. The resulting optimal feedback gain $K$ is given by
\[K=
\begin{bmatrix}
1.072 & -0.211 & 0.017 & -0.434 & 3.825 \\
-0.310 & 0.595 & 0.695 & 2.047 & 0
\end{bmatrix},
\]
while it can be readily verified that the closed-loop system is stable (the system response curves are omitted for brevity).

We then consider the DFT-LQ problem for Example \ref{exp2}, where we force $K_{22}=\mathbf{0}_{1\times 2}$. Based on the discussions in Section \ref{section_link}, the resulting optimal feedback gain $K$ is given by
\[
K = \begin{bmatrix}
1.449 & 0.208 & 2.855 & 4.266 & 1.961 \\
-0.375 & 0.430 & 0 & 0 & -0.987
\end{bmatrix}.
\]
Alternatively, if we force $K_{13}=0$, then the resulting optimal feedback gain $K$ is given by
\[
K = \begin{bmatrix}
0.713 & -0.950 & -0.268 & -0.411 & 0 \\
-0.022 & 1.219 & 0.890 & 1.482 & 9.626
\end{bmatrix}.
\]
Hence, while the studies of this paper center on the SF-LQ problem, the developed framework and methodology are sufficiently general to accommodate the DFT-LQ case as well.
\end{solution}

\begin{figure}[htbp]
    \centering
    \begin{subfigure}[b]{0.45\textwidth}
        \includegraphics[width=\textwidth]{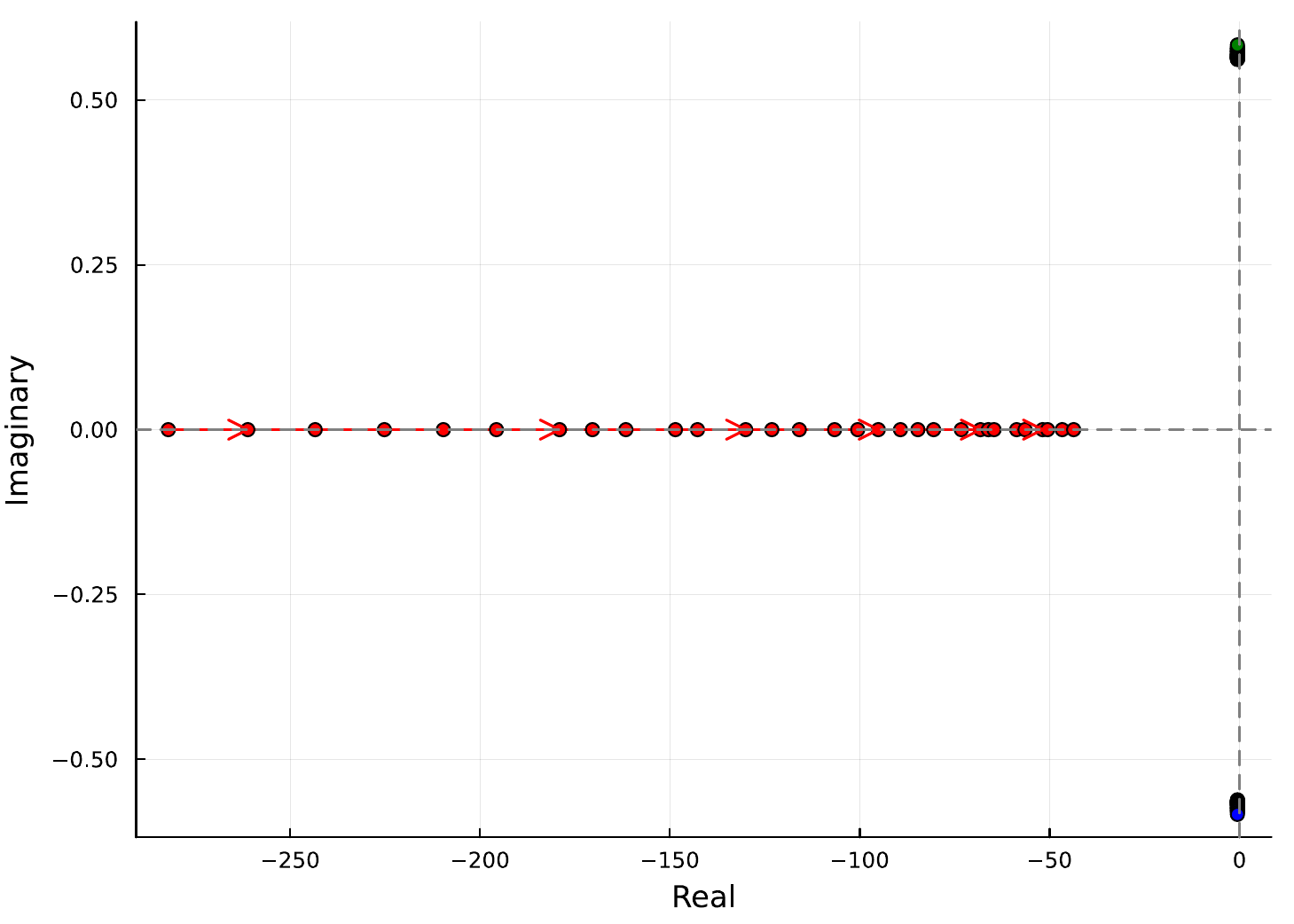}
        \caption{Eigenvalue Trajectories as $\delta\in[0.01,0.1]$ Varies}
        \label{eigenvalue_trajactory}
    \end{subfigure}
    \hfill
    \begin{subfigure}[b]{0.45\textwidth}
        \includegraphics[width=\textwidth]{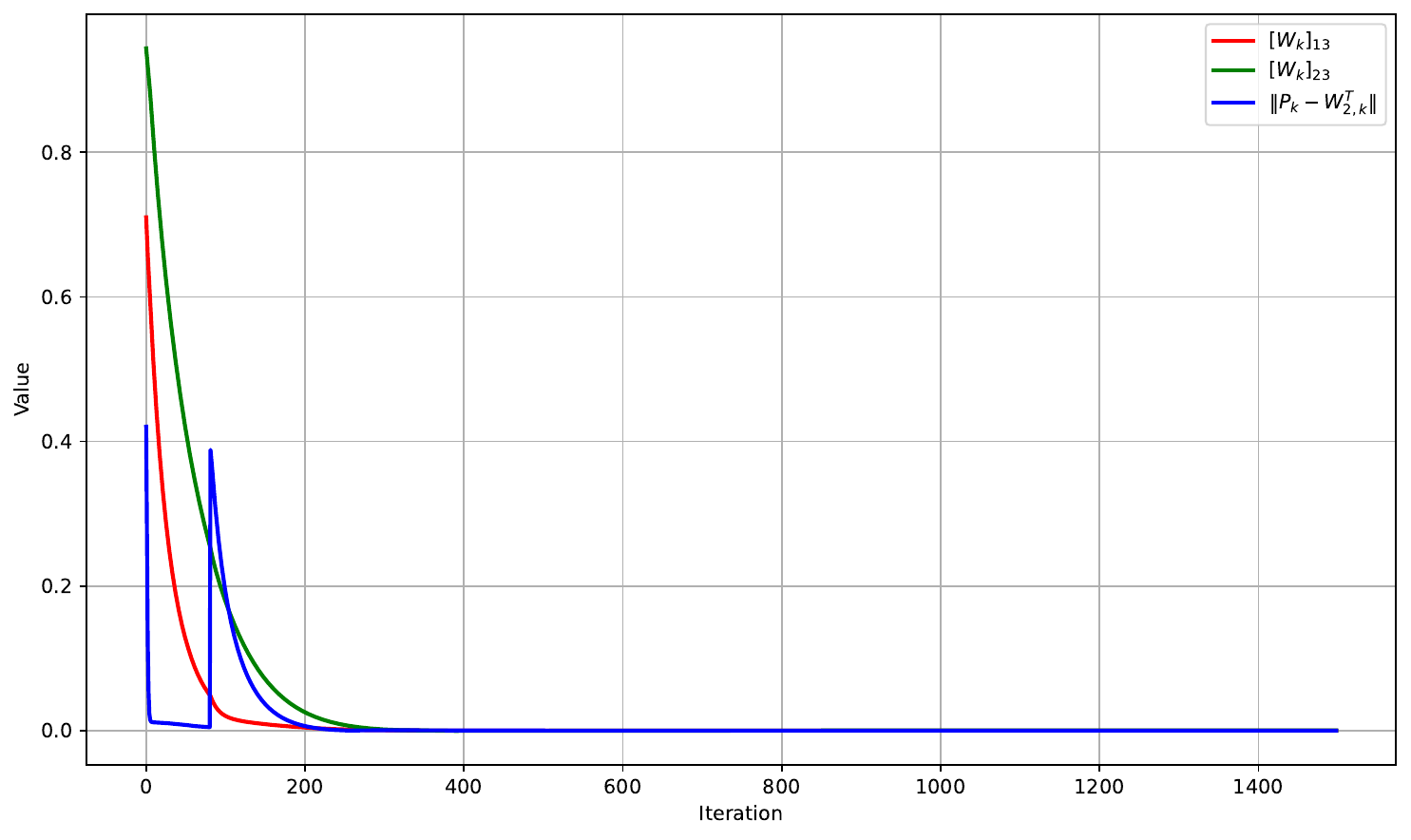}
        \caption{Feasibility of PALM for Affine Constraint $\mathcal{A}\widetilde{W}+\mathcal{B}\widetilde{P}=0$}
        \label{PALM-fig}
    \end{subfigure}

    \vspace{0.3cm}

    \begin{subfigure}[b]{0.45\textwidth}
        \includegraphics[width=\textwidth]{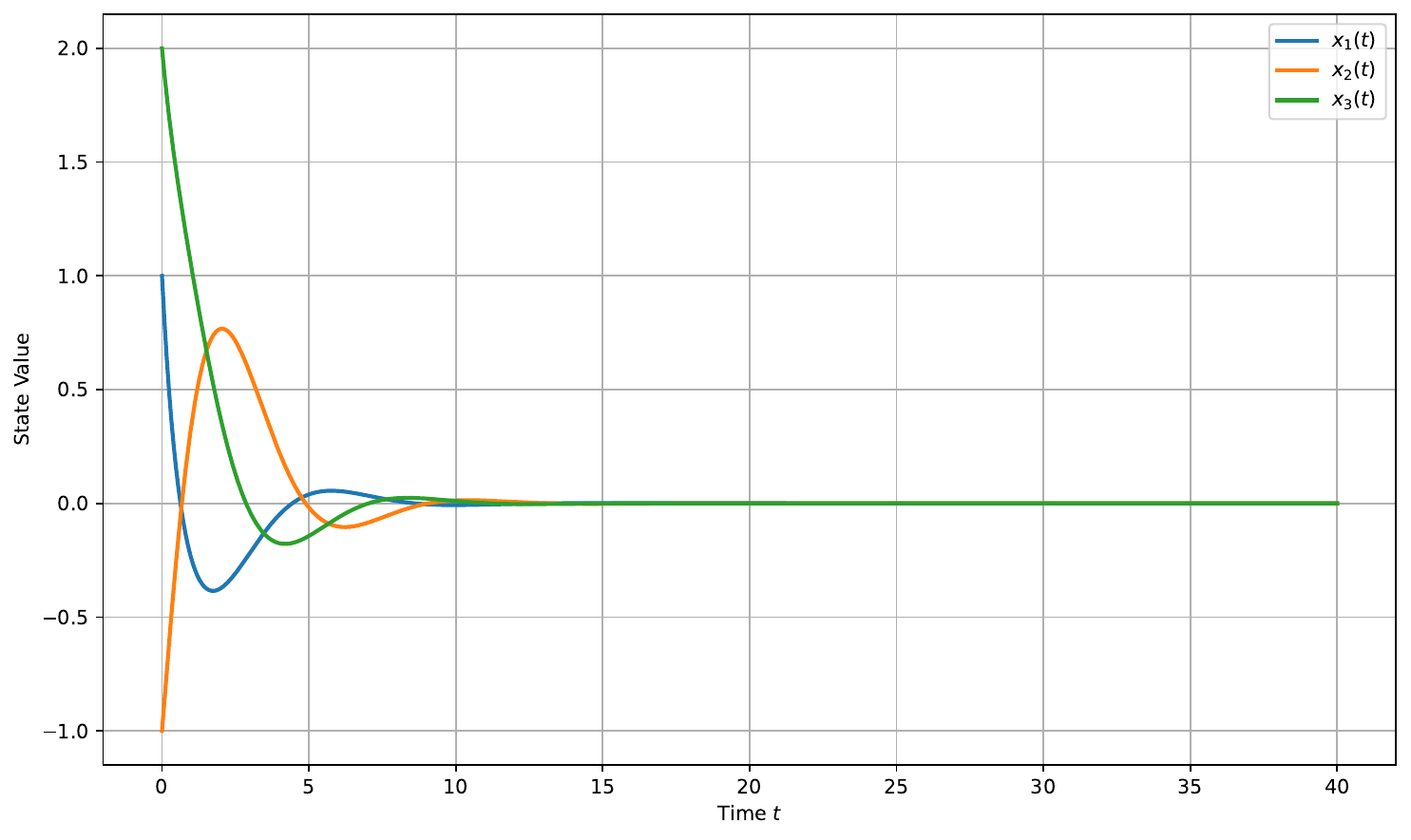}
        \caption{System Response}
        \label{response}
    \end{subfigure}
    \hfill
    \begin{subfigure}[b]{0.45\textwidth}
        \includegraphics[width=\textwidth]{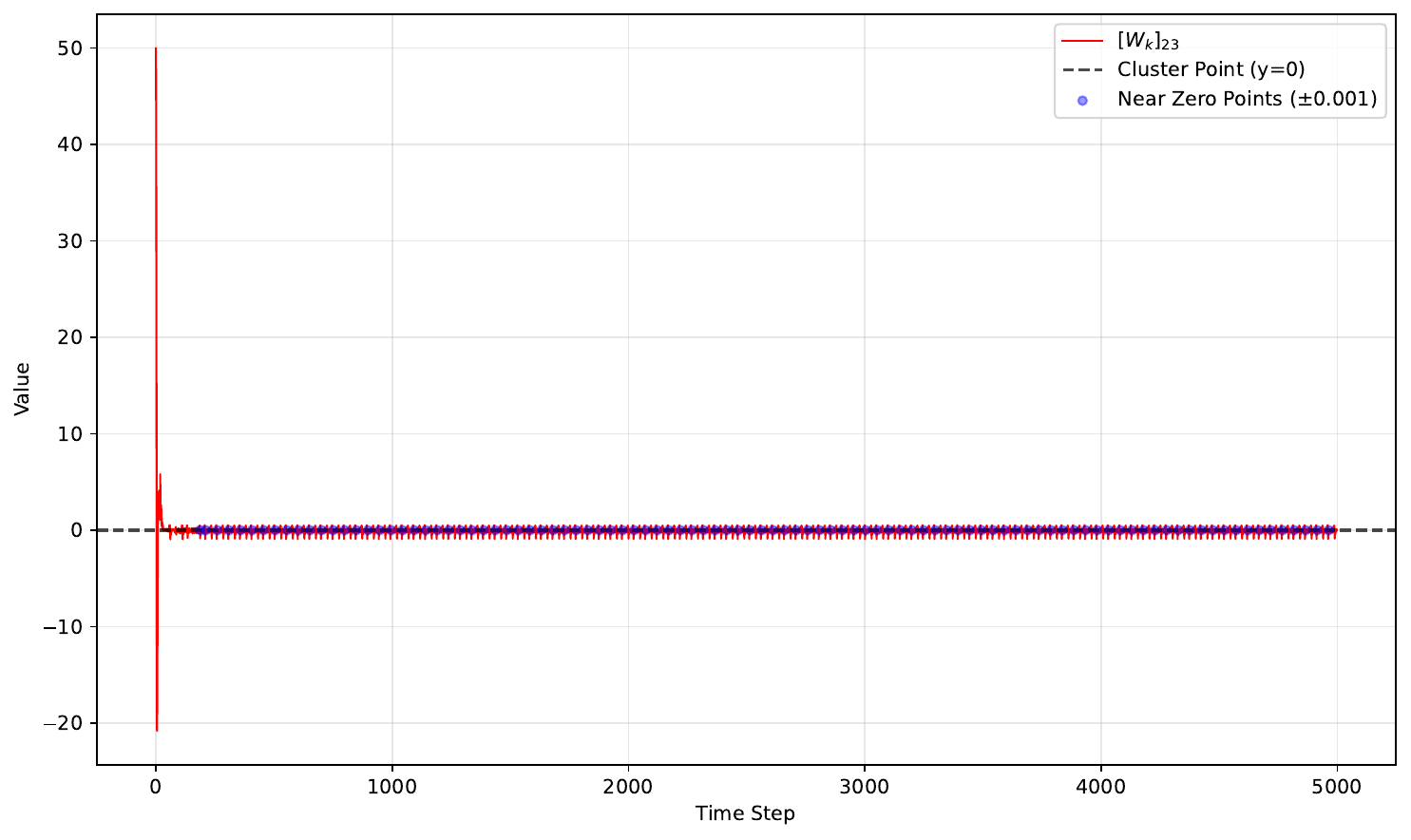}
        \caption{Trajectory of $[W_k]_{23}$}
        \label{ADMM-l0}
    \end{subfigure}

    \vspace{0.3cm}

    \begin{subfigure}[b]{0.45\textwidth}
        \includegraphics[width=\textwidth]{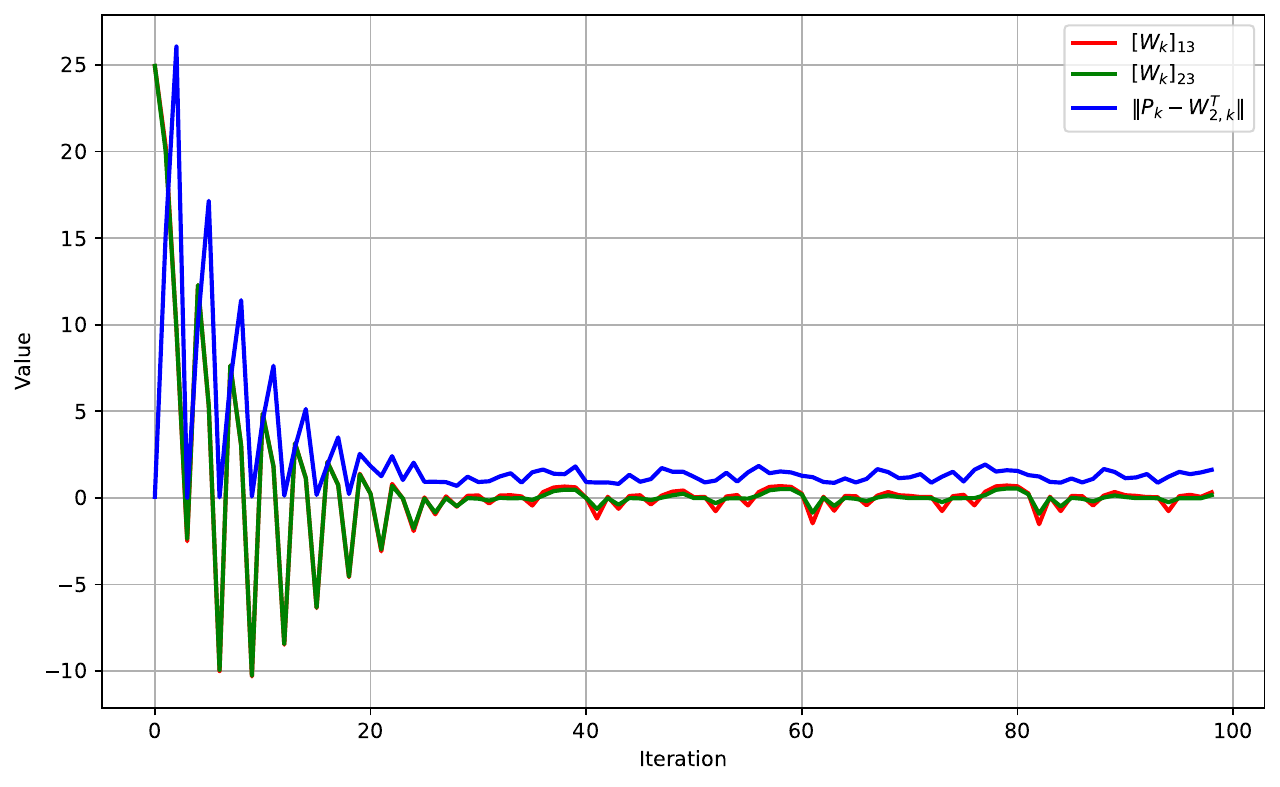}
        \caption{Non-Feasibility of $\Vert P_k-W_{2,k}^\top\Vert$}
        \label{ADMM-l0-10}
    \end{subfigure}
    \hfill
    \begin{subfigure}[b]{0.45\textwidth}
        \includegraphics[width=\textwidth]{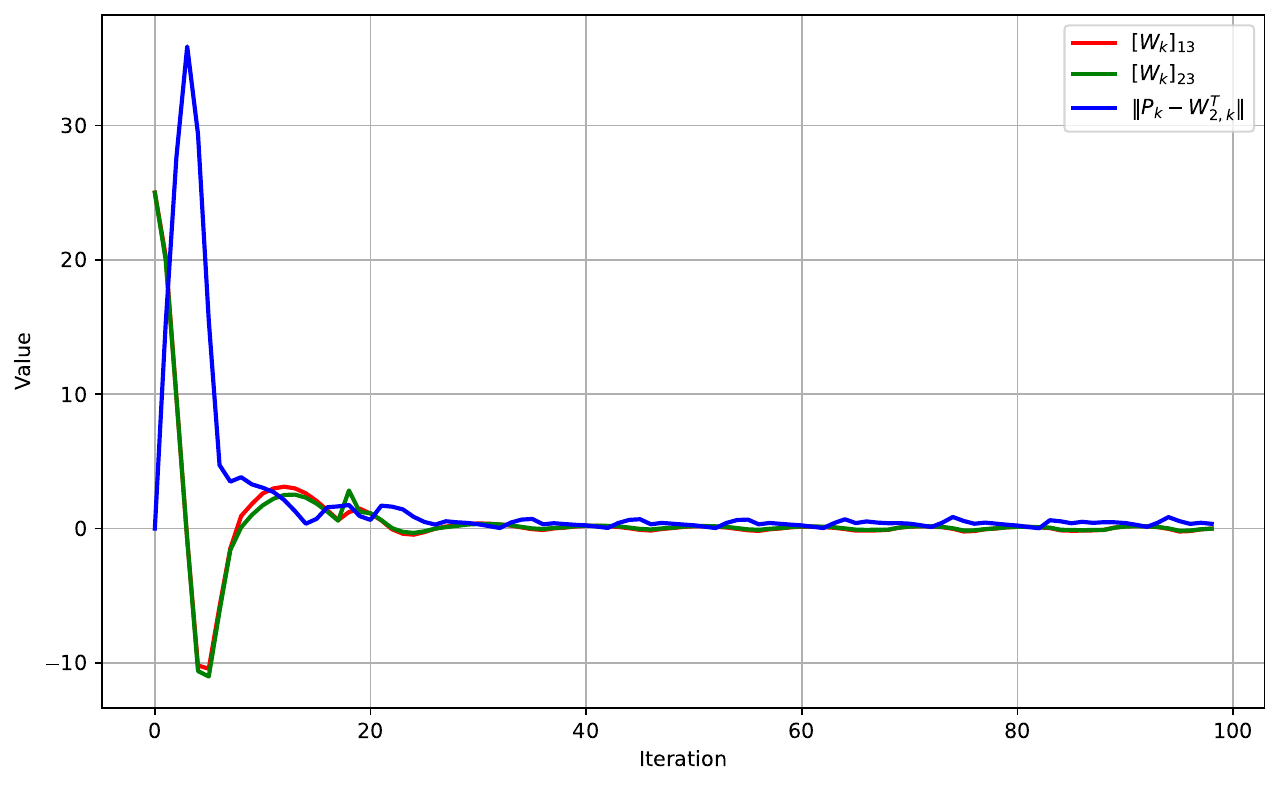}
        \caption{Subsequence Asymptotic Feasibility of $\Vert P_k-W_{2,k}^\top\Vert$ when $\beta=100$}
        \label{ADMM-l0-100}
    \end{subfigure}

    \vspace{0.3cm}

    \begin{subfigure}[b]{0.45\textwidth}
        \includegraphics[width=\textwidth]{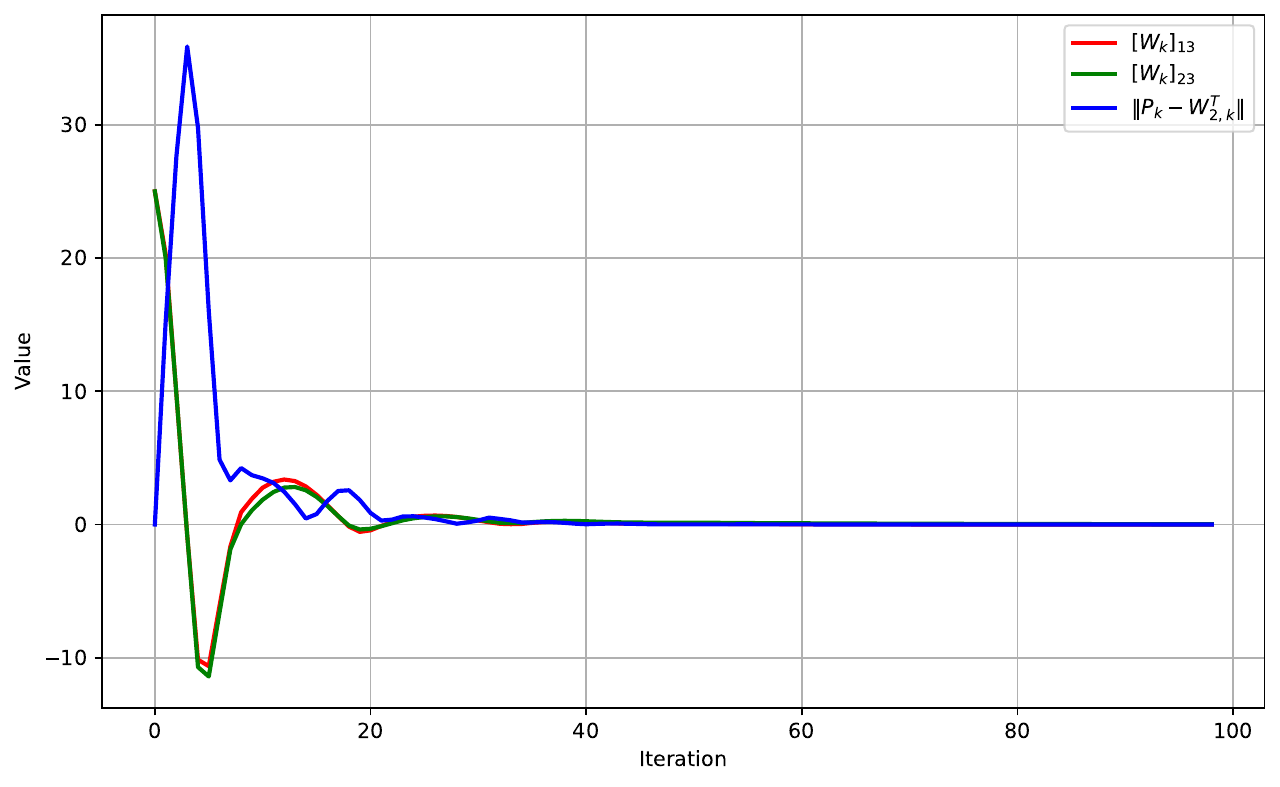}
        \caption{Asymptotic Feasibility when $\beta=300$}
        \label{ADMM-l0-300}
    \end{subfigure}
    \hfill
    \begin{subfigure}[b]{0.45\textwidth}
        \includegraphics[width=\textwidth]{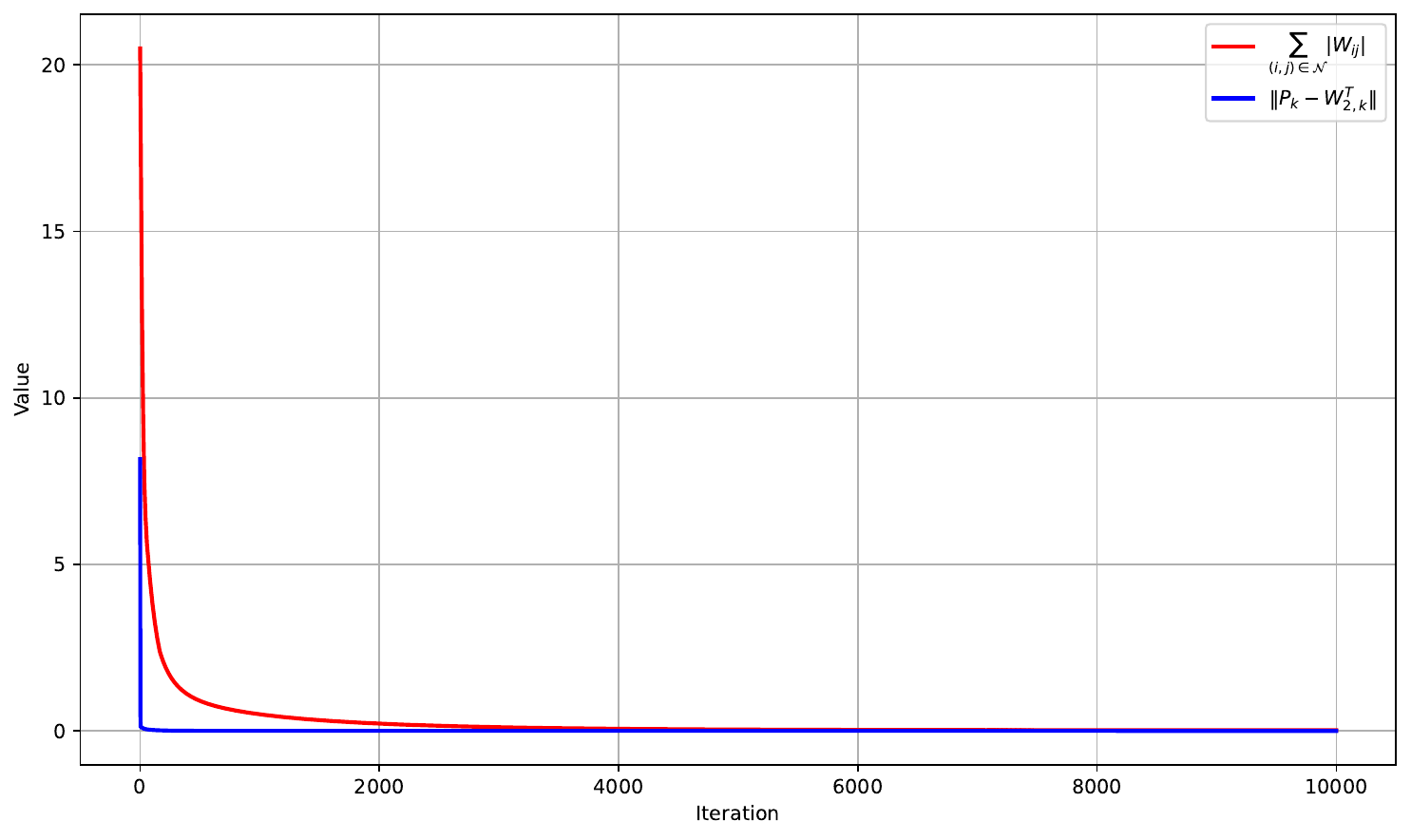}
        \caption{Feasibility of PALM for Example \ref{exp2}}
        \label{ADMM-large}
    \end{subfigure}
    \caption{Figures of Section \ref{section6}}
    \label{union1}
\end{figure}

\section{Conclusion}

This paper presents a unified nonconvex optimization framework for designing group-sparse feedback controllers in infinite-horizon LQ problems. Our main contributions are threefold: First, we establish a connection between the DFT-LQ problem and the SF-LQ problem, showing they can be addressed within the same optimization framework. Second, we develop a PALM algorithm that efficiently solves the penalized nonconvex optimization problem \eqref{contri-2}, providing rigorous convergence analysis. Our framework fills an important gap in the literature by enabling direct design of group-sparse feedback gains with theoretical guarantees, without relying on convex surrogates or restrictive structural assumptions. In the near future, we may extend our framework to the case of time-varying communication topology and the case of time-varying system matrices.

\section*{Appendix}

\subsection*{Proof of Theorem \ref{boundness-thm}}

\begin{proof}
For a fixed $n \geq 1$, by Lemma \ref{selectrule}, we know that $C_2, C_3, C_4 > 0$, and from the parameter selection criterion \eqref{chosepara}, we know that $2\tau > \beta$. Therefore, Lemma \ref{desecent_lemma} is applicable. From \eqref{descent-chap3}, we know that
\begin{equation*}
  \begin{aligned}
  \Psi_1&\geq\cdots\geq\Psi_n\geq\Psi_{n+1}\\
  &\geq f(z_{n+1})+g(\widetilde{P}_{n+1})+H(\widetilde{W}_{n+1},\widetilde{P}_{n+1})+\langle u_{n+1},\widetilde{W}_{n+1}-z_{n+1}\rangle+\frac{\beta}{2}\Vert \widetilde{W}_{n+1}-z_{n+1}\Vert^2\\
  &=f(z_{n+1})+g(\widetilde{P}_{n+1})+H(\widetilde{W}_{n+1},\widetilde{P}_{n+1})-\frac{1}{2\beta}\Vert u_{n+1}\Vert^2+\frac{\beta}{2}\left\Vert \widetilde{W}_{n+1}-z_{n+1}+\frac{1}{\beta}u_{n+1}\right\Vert^2.
  \end{aligned}
\end{equation*}
By \eqref{PAML}, it is straightforward to deduct that
\begin{equation*}
  \begin{aligned}
  u_{n+1}&=u_n+\sigma\beta(\widetilde{W}_{n+1}-z_{n+1})\\
  &=u_{n+1}+(\sigma-1)\beta(\widetilde{W}_{n+1}-z_{n+1})+\beta(\widetilde{W}_{n+1}-z_{n+1})\\
  &=\left(1-\frac{1}{\sigma}\right)(u_{n+1}-u_n)+u_n+\beta(\widetilde{W}_n-z_{n+1})+\beta(\widetilde{W}_{n+1}-\widetilde{W}_n)\\
  &=\left(1-\frac{1}{\sigma}\right)(u_{n+1}-u_n)+(\tau-\beta-\rho \mathcal{A}^\top\mathcal{A})(\widetilde{W}_{n}-\widetilde{W}_{n+1})\\
  &~~~~-\rho \mathcal{A}^\top (\mathcal{A}\widetilde{W}_{n+1}+\mathcal{B}\widetilde{P}_{n+1}).
  \end{aligned}
\end{equation*}
Letting
\begin{equation}\label{B*}
  B_*\doteq \sup\limits_{n\geq 0}\{\Vert \widetilde{W}_{n+1}-\widetilde{W}_n\Vert,\Vert \widetilde{P}_{n+1}-\widetilde{P}_n\Vert,\Vert z_{n+1}-z_n\Vert,\Vert u_{n+1}-u_n\Vert\}
\end{equation}
and based on Theorem 2.8 of \cite{bot2019proximal}, it holds that $B_*<\infty$; hence it follows that
\begin{equation*}
  \Vert u_{n+1}\Vert\leq \left(\frac{1}{\sigma}-1+\tau+\kappa_1\right)B_*+\rho\Vert \mathcal{A}^\top (\mathcal{A}\widetilde{W}_{n+1}+\mathcal{B}\widetilde{P}_{n+1})\Vert.
\end{equation*}
Using Cauchy-Schwarz inequality, we have
\begin{equation*}
  \Vert u_{n+1}\Vert^2\leq 2\left(\frac{1}{\sigma}-1+\tau+\kappa_1\right)^2B_*^2+2\rho^2\Vert \mathcal{A}^\top (\mathcal{A}\widetilde{W}_{n+1}+\mathcal{B}\widetilde{P}_{n+1})\Vert^2.
\end{equation*}
In summary, we obtain an estimate for the lower bound of $\Psi_1$:
\begin{equation}\label{lowerboundPsi}
\begin{aligned}
  \Psi_1\geq&f(z_{n+1})+g(\widetilde{P}_{n+1})+H(\widetilde{W}_{n+1},\widetilde{P}_{n+1})+\frac{\beta}{2}\left\Vert \widetilde{W}_{n+1}-z_{n+1}+\frac{1}{\beta}u_{n+1}\right\Vert^2\\
  &-\frac{1}{\beta}\left(\frac{1}{\sigma}-1+\tau+\kappa_1\right)^2B_*^2-\frac{\rho^2}{\beta}\Vert \mathcal{A}^\top (\mathcal{A}\widetilde{W}_{n+1}+\mathcal{B}\widetilde{P}_{n+1})\Vert^2.
\end{aligned}
\end{equation}
On one hand, by the definitions of $f$, $g$ and for any $n \geq 1$, we have
\begin{equation}\label{upper1}
\begin{aligned}
&\frac{1}{2}H(\widetilde{W}_{n+1},\widetilde{P}_{n+1})+\frac{\beta}{2}\left\Vert \widetilde{W}_{n+1}-z_{n+1}+\frac{1}{\beta}u_{n+1}\right\Vert^2\\
\leq&\Psi_1+\frac{1}{\beta}\left(\frac{1}{\sigma}-1+\tau+\kappa_1\right)^2B_*^2\\
&-\frac{1}{2}\inf\limits_{n\geq1}\left\{H(\widetilde{W}_{n+1},\widetilde{P}_{n+1})-\left(\frac{1}{\gamma_2}-\frac{\kappa_1}{2\gamma_2^2}\right)\rho^2\Vert \mathcal{A}^\top (\mathcal{A}\widetilde{W}_{n+1}+\mathcal{B}\widetilde{P}_{n+1})\Vert^2\right\} \\
\overset{\mathrm{(\romannumeral1)}}{\leq}&\Psi_1+\frac{1}{\beta}\left(\frac{1}{\sigma}-1+\tau+\kappa_1\right)^2B_*^2 < \infty. 
\end{aligned}
\end{equation}
Here, based on the $\kappa_1$-Lipschitz continuity of $H$ with respect to $\widetilde{W}$, inequality (\romannumeral1) holds. On the other hand,
\begin{equation}\label{upper2}
  \begin{aligned}
  &f(z_{n+1})+g(\widetilde{P}_{n+1})+\frac{\beta}{2}\left\Vert \widetilde{W}_{n+1}-z_{n+1}+\frac{1}{\beta}u_{n+1}\right\Vert^2\\
  \leq & \Psi_1+\frac{1}{\beta}\left(\frac{1}{\sigma}-1+\tau+\kappa_1\right)^2B_*^2\\
  &-\inf\limits_{n\geq1}\left\{H(\widetilde{W}_{n+1},\widetilde{P}_{n+1})-\left(\frac{1}{\gamma_1}-\frac{\kappa_1}{2\gamma_1^2}\right)\rho^2\Vert \mathcal{A}^\top (\mathcal{A}\widetilde{W}_{n+1}+\mathcal{B}\widetilde{P}_{n+1})\Vert^2\right\}\\
  \leq& \Psi_1+\frac{1}{\beta}\left(\frac{1}{\sigma}-1+\tau+\kappa_1\right)^2B_*^2<\infty.
  \end{aligned}
\end{equation}
By \eqref{upper1}, we can deduce that the sequence $\{\widetilde{W}_{n}-z_{n}+u_{n}\}_{n\geq 0}$ is bounded, while by \eqref{PAML}, we have $u_{n+1}-u_n=\sigma\beta (\widetilde{W}_{n+1}-z_{n+1})$.
Therefore, the sequence $\{\widetilde{W}_{n} - z_{n}\}_{n \geq 0}$ is bounded; otherwise, it would contradict $B_* < \infty$; and the fact above implies that the sequence $\{u_n\}_{n \geq 0}$ is also bounded. 
By Assumption \ref{ass1}, we have
$$f(z_{n+1})\geq \langle \mathrm{vec}(R), z_{n+1} \rangle\geq \lambda_{\rm min}(R)\mathrm{Tr}(\mathrm{vec}^{-1}(z_{n+1}))\geq \lambda_{\rm min}(R)\Vert \mathrm{vec}^{-1}(z_{n+1})\Vert_2$$
with $\lambda_{\rm min}(R)>0$. Hence, combining with \eqref{upper2}, we can deduce that $\{z_n\}_{n\geq 0}$ is bounded. Due to the boundedness of sequences $\{z_n\}_{n\geq 0}$ and  $\{\widetilde{W}_{n} - z_{n}\}_{n \geq 0}$, $\{\widetilde{W}_n\}_{n \geq 0}$ is bounded. 
By further using \eqref{upper1}, we have
\begin{equation*}
  \infty>H(\widetilde{W}_n,\widetilde{P}_n)=\frac{\rho}{2}\Vert\mathcal{A}\widetilde{W}_n+\mathcal{B}\widetilde{P}_n \Vert^2\geq \Vert \mathrm{vec}(W_{2,n})-\widetilde{P}_n\Vert^2
\end{equation*}
with $W_{2,n}=V_1\mathrm{vec}^{-1}(\widetilde{W}_n)V_2^\top$.
Since $\{\widetilde{W}_n\}_{n \geq 0}$ is bounded, it follows that $\{W_{2,n}\}_{n \geq 0}$ is bounded, and consequently, $\{\widetilde{P}_n\}_{n \geq 0}$ is bounded. In summary, we have proven that the iterative sequence $\{(\widetilde{W}_n, \widetilde{P}_n, z_n, u_n)\}_{n \geq 0}$ is bounded.
\end{proof}

%\nocite{*}
\bibliographystyle{plain}
\bibliography{tac_7.9}

\end{document}